 \def\@textbottom{\vskip \z@ \@plus 1pt}
 \let\@texttop\relax
\title{On the number of sets with small sumset}
\author{Dingyuan Liu}
\address{Dingyuan Liu \newline Institut für Algebra und Geometrie, Karlsruher Institut für Technologie, Englerstr. 2, D-76131 Karlsruhe, Germany}
\email{liu@mathe.berlin}
\author{Letícia Mattos}
\address{Letícia Mattos \newline Institut für Informatik, Universität Heidelberg, Im Neuenheimer Feld 205, D-69120 Heidelberg, Germany}
\email{mattos@uni-heidelberg.de}
\author{Tibor Szabó}
\address{Tibor Szabó \newline Institut für Mathematik, Freie Universität Berlin, Arnimallee 6, D-14195 Berlin, Germany}
\email{szabo@math.fu-berlin.de }
\thanks{TSz was partially funded by the DFG (German Research Foundation) under Germany’s Excellence Strategy -- The Berlin Mathematics Research Center MATH+ (EXC-2046/1, project ID 390685689)}
\newtheorem{theorem}{Theorem}
\newtheorem{lemma}[theorem]{Lemma}
\newtheorem{conjecture}[theorem]{Conjecture}
\newtheorem{claim}[theorem]{Claim}
\numberwithin{theorem}{section}
\theoremstyle{remark}
\def\C{\mathcal{C}}
\def\D{\mathcal{D}}
\def\F{\mathcal{F}}
\def\HH{\mathcal{H}}
\def\I{\mathcal{I}}
\def\K{\mathcal{K}}
\def\LL{\mathcal{L}}
\def\Z{\mathbb{Z}}
\def\se{\subseteq}
\newcommand{\mychoose}[2]{\left({{#1}\atop#2}\right)}
\newcommand{\eps}{\varepsilon}
\newcommand{\sumrise}{\textsc{sumrise}}
\newcommand{\sunset}{\textsc{Sunset}}
\definecolor{lblue}{rgb}{0.5,0.5,1}
\newcommand{\eq}[1]{\begin{equation}\label{eq:#1}}
	\newcommand{\eqe}{\end{equation}}
\begin{document}

\begin{abstract}
	We investigate subsets with small sumset in arbitrary abelian groups.
	For an abelian group $G$ and an $n$-element subset $Y \subseteq G$ we show that if $m \ll s^2/(\log n)^2$, then the number of subsets $A \subseteq Y$ with $|A| = s$ and $|A + A| \leq m$ is at most
    \[2^{o(s)}\binom{\frac{m+\beta}{2}}{s},\]
    where $\beta$ is the size of the largest subgroup of $G$ of size at most $\left(1+o(1)\right)m$.
	This bound is sharp for $\mathbb{Z}$ and many other groups.
	Our result improves the one of Campos and nearly bridges the remaining gap in a conjecture of Alon, Balogh, Morris, and Samotij.

	We also explore the behaviour of uniformly chosen random sets $A \subseteq \{1,\ldots,n\}$ with $|A| = s$ and $|A + A| \leq m$.
	Under the same assumption that $m \ll s^2/(\log n)^2$, we show that with high probability there exists an arithmetic progression $P \subseteq \mathbb{Z}$ of size at most $m/2 + o(m)$ containing all but $o(s)$ elements of $A$. Analogous results are obtained for asymmetric sumsets, improving results by Campos, Coulson, Serra, and Wötzel.
  
    The main tool behind our results is a more efficient container-type theorem developed for sets with small sumset, which gives an essentially optimal collection of containers. The proof of this combines an adapted hypergraph container lemma, that caters to the asymmetric setup as well, with a novel ``preprocessing'' graph container lemma, which allows the hypergraph container lemma to be called upon significantly less times than was necessary before.

\end{abstract}
\vspace{-1em}
\maketitle

\vspace{-30pt}
\section{Introduction}

	Let $(G,+)$ be an abelian group. The sumset of a set $A \subseteq G$ is defined as 
	\[A+A = \{a_1 + a_2 : a_1, a_2 \in A\}.\]
	A key question in additive combinatorics is: what can the size of the sumset $A+A$ reveal about the structure of $A$?
	Freiman's theorem~\cite{freiman1959addition} (see also~\cite{green2007freiman,schoen2011near}) states that for $A \subseteq \mathbb{Z}$, if the sumset $|A+A|$ is ``small'', $A$ can be found in a generalised arithmetic progression with bounded volume and dimension, based on the doubling factor $|A+A|/|A|$. 
	Green and Ruzsa~\cite{green2007freiman} extended this to all abelian groups. 
	In a major breakthrough, Gowers, Green, Manners, and Tao~\cite{gowers2023conjecture} proved Marton's conjecture,
	showing that for groups with bounded torsion, a set $A$ with a small sumset can be covered by a polynomial number of translates (depending on the doubling) of a subgroup of size $|A|$.

	For integers $m,s \in \mathbb{N}_{\ge 1}$ and subset $Y\subseteq G$, let $\F_Y(m,s)$ denote the family of subsets $A \se Y$ of size $|A| = s$ such that $|A+A| \le m$. 
	The family of {\em all} subsets with sumset of size at most $m$ is denoted by $\F_Y(m) = \cup_{s=0}^{m} \F_Y(m,s)$.

\subsection*{The refined problem.}
	The study of the size of the refined family $\F_{Y}(m,s)$ was initiated by Green~\cite{green2005counting} for $\mathbb{Z}_n$,
	who gave an  upper bound for every range of the doubling factor $m/s$ and used it as a tool to bound the clique number of random Cayley graphs. 
	Subsequently Alon, Balogh, Morris, and Samotij~\cite{alon2014refinement}, while studying a refinement of the Erd\H os--Cameron conjecture, employed families of subsets of $[n] \subseteq \mathbb{Z}$ with small sumset.
    They noted that for any $s$-subset $A \se [n]$ which is contained in an arithmetic progression $P$ of size $\lfloor \frac{m+1}{2} \rfloor$, one has $|A+A| \leq |P+P| = 2|P| -1 \leq m$, and hence 
    \begin{align*}
        |\F_Y(m,s)| = \Omega\left( \frac{n^2s}{m^2} \right) \mychoose{ \lfloor \frac{m+1}{2}\rfloor }{s}
    \end{align*}
    whenever $6s \le 0.99m$ and $\frac{m+1}{2} \le 0.99n$.
    As this lower bound has neither been explicitly stated nor proved in the literature, we present the details in Appendix~\ref{appendix:lower-bound}.
    This motivated them to conjecture the following. 
    Unless otherwise specified, throughout this paper we default that all logarithms are in base $2$. For all $x \in \mathbb{R}$ and $s \in \mathbb{N}_{\ge 0}$ define $\binom{x}{s}$ to be $x(x-1)\cdots(x-s+1)/s!$ if $x\ge s$ and $0$ if $x<s$.

	\begin{conjecture}~\cite[Conjecture 1.5]{alon2014refinement} \label{conj:ABMS}
		For every $\delta > 0$, there exists $C>0$ such that the following holds. 
		If $m \le \frac{s^2}{C}$ and $s \ge C \log n$, then there are at most
		\[ 2^{\delta s} \binom{m/2}{s} \]
		sets $A \se [n]$ with $|A| = s$ and $|A+A| \le m$.
	\end{conjecture}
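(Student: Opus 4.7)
My plan is to prove Conjecture~\ref{conj:ABMS} via the hypergraph container method. The starting observation is that $|A+A|\le m$ is an instance of supersaturation: Cauchy--Schwarz applied to the representation function $r_A(x):=|\{(a,b)\in A^2 : a+b=x\}|$ gives $\sum_x r_A(x)^2 \ge s^4/m$, so $A$ participates in $\gtrsim s^4/m$ additive quadruples $a+b=c+d$. Encode these quadruples as hyperedges of an auxiliary hypergraph $H$ on $[n]$ (either directly as $4$-uniform, or, after passing between $A$ and $A-A$ via Plünnecke--Ruzsa, as $3$-uniform Schur triples $\{a,b,c\}$ with $a+b=c$). Any $A$ with small sumset then contains far more edges of $H$ than a random $s$-subset would, which is exactly the supersaturation input required to drive the container machinery; the nontrivial piece is to verify a co-degree condition, which roughly says that for a carefully chosen fingerprint $S\subseteq A$ of size $o(s)$, a typical $a\in A\setminus S$ sits in many hyperedges with vertices from $S$.

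The second step is the container call itself. A single invocation of the hypergraph container lemma only shrinks the ambient set by a constant factor, so naively reaching containers of size $(1+o(1))m$ requires $\Theta(\log(n/m))$ iterations, inflating the container count to $2^{\Theta(s\log\log n/\log n)}$. To avoid this, I would precede and interleave the hypergraph calls with a cheaper \emph{preprocessing} graph container step, applied to the auxiliary graph whose edges are pairs $(a,b)\in [n]^2$ sharing many common sum-partners inside the portion of $A$ already revealed. This graph step compresses $[n]$ to a set of size $\mathrm{poly}(m)$ at a fingerprint cost of only $o(s)$ bits, after which $O(1)$ hypergraph container calls suffice to produce a family $\mathcal{C}$ of containers with $\log|\mathcal{C}|=o(s)$ and $|C+C|\le (1+o(1))m$ for every $C\in\mathcal{C}$.

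The third step is the structural bound per container. Each $C$ has doubling at most $2+o(1)$, so by Freiman's $3k-4$ theorem (or Green--Ruzsa plus the subgroup $\beta$ of the abstract in the general abelian setting) $C$ lies inside an arithmetic progression of length at most $m/2 + o(m)$. Summing then gives
\[
|\mathcal{F}_Y(m,s)| \;\le\; |\mathcal{C}|\cdot\binom{m/2+o(m)}{s} \;\le\; 2^{o(s)}\binom{m/2}{s},
\]
with the $o(m)$ slack in the binomial absorbed into the $2^{\delta s}$ factor using $m\le s^2/C$ and $s\ge C\log n$.

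The main obstacle lies in the second step: the naive hypergraph container iteration loses a factor of $\log n$ in the exponent, so reaching a $2^{o(s)}$ container count for $s$ as small as $C\log n$ --- the full strength of the conjecture --- requires the preprocessing graph step to be strictly cheaper than the hypergraph one, while still compressing the vertex set by a polynomial factor in essentially one shot. Navigating this balance is exactly what separates the conjectured $m\le s^2/C$ regime from the $m\ll s^2/(\log n)^2$ regime achieved by the paper's main theorem, and closing the final $\log n$ gap is where I would expect the most delicate work to lie.
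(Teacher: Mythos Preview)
The statement you are trying to prove is stated in the paper as a \emph{conjecture}, not a theorem, and the paper does not prove it. More than that, the paper explicitly records (immediately after stating Conjecture~\ref{conj:ABMS}) that a construction of Morris shows the conjecture is \emph{false} in the regime $m/s \gg s/\log n$; since one can take $m=s^2/C$ and $s=C\log n$ with $n$ large, this regime intersects the hypotheses of the conjecture. What the paper actually establishes is the weaker Theorem~\ref{thm:symmetric-refined}, valid only under the stronger assumption $m\ll s^2/(\log n)^2$, and it leaves a $\log n$ gap to the Morris barrier. So your final paragraph, where you describe ``closing the final $\log n$ gap'' as the delicate step, is not just delicate --- it is impossible: that gap is at least partly genuine, and no argument can prove the conjecture in the full range stated.

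Your outline in steps 1--2 is, in broad strokes, close to what the paper does for its weaker theorem: a cheap graph container preprocessing (the paper's \textsc{sumrise}, Lemma~\ref{lemma:first-container}) brings the ambient set down to size $O(m)$ at cost $n^{O(\sqrt{m})}$, after which only $O(\varepsilon^{-2})$ hypergraph container iterations (Lemma~\ref{lemma:container-shrinking}) are needed. But step 3 contains a concrete error. You assert that each container $C$ has doubling at most $2+o(1)$ and then invoke Freiman's $3k{-}4$ theorem to put $C$ inside a progression of length $m/2+o(m)$. Nothing in the container machinery gives you $|C+C|\le(1+o(1))m$: the containers are built to \emph{contain} sets $A$ with $|A+A|\le m$, but they need not themselves have small sumset. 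The paper never applies Freiman to the containers; instead it bounds $|C_1|+|C_2|$ directly via the supersaturation Lemma~\ref{lemma:supersaturation}, which says that if $|C_1|+|C_2|$ were too large relative to $m+\beta$ then there would be too many hyperedges in $\mathcal{H}(C_0,C_1,C_2)$, contradicting the termination condition of the container iteration. This is what yields $|C_1\cap C_2|\le(1+o(1))(m+\beta)/2$ and then the count $2^{o(s)}\binom{(m+\beta)/2}{s}$.
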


	A construction by Morris, described in~\cite{campos2020number}, shows that the conjecture is false when the doubling factor $m/s$ is too large, more precisely, when $m/s \gg s(\log n)^{-1}$.
	Observe that if $m/s = O\big(s(\log n)^{-1}\big)$, then this automatically implies that $s$ should be at least of order $\log n$, since trivially $m\geq s$.

	Conjecture~\ref{conj:ABMS} holds trivially whenever the doubling factor $m/s$ is strictly less than $2$, since $|A+A| \geq 2|A| -1$ for every set $A \subseteq [n]$. 
	Green and Morris~\cite{green2016counting} proved the conjectured bound whenever the doubling factor $m/s$ is upper bounded by a constant.
   In the same range of $m/s$, Campos, Collares, Morris, Morrison, and Souza~\cite{campos2022typical} even managed to determine the size of the family $\F_{[n]}(m,s)$ up to a constant factor, given that $(\log n)^4 < s = o(n)$.    
   Meanwhile, Campos~\cite{campos2020number} improved the Green--Morris theorem in a different direction: he extended the range of validity of the Alon--Balogh--Morris--Samotij conjecture up to $m/s \ll s(\log n)^{-3}$. 
   This leaves a $(\log n)^2$-factor gap between the range of the doubling factor for which Campos' result holds and the range for which the conjecture is false.
   
In our first main theorem we improve on Campos' theorem and reduce, from $(\log n)^2$ to $\log n$, the gap between the ranges of Morris' counterexample and when the conjecture is true. Furthermore, similar to Campos~\cite{campos2020number}, we also extend our result to arbitrary abelian groups.
   As in~\cite{campos2020number}, our bound depends on the quantity 
   \begin{align}\label{eq:defn-beta}
		\beta_G(m)\coloneqq \max\{|H|: H \le G \text{ and } |H| \le m\}.
	\end{align}
	In words, $\beta_G(m)$ is the size of the largest subgroup of $G$ whose size is at most $m$.
    Note that $\beta_{\mathbb{Z}}(m) = 1$ for every $m \in \mathbb{N}_{\ge 1}$.

	\begin{theorem}\label{thm:symmetric-refined} 
		Let $n, m=m(n), s=s(n) \in \mathbb{N}_{\ge 1}$ be integers such that $2s \le m \ll \frac{s^2}{(\log n)^2}$ as $n \rightarrow \infty$. 
		Let $G$ be an arbitrary abelian group and $Y$ be an $n$-element subset of $G$\footnote{For a meaningful statement we actually need to speak about sequences of groups $G_n$ and subsets $Y_n \subseteq G_n$, where $|Y_n| = n \in \mathbb{N}_{\ge 1}$.}.
		Then, the number of subsets $A \in \binom{Y}{s}$ such that $|A+A| \le m$ is at most
		\[2^{o(s)} \mychoose{\frac{m+\beta}{2}}{s},\]
        where $\beta = \beta_G\left(m+o(m)\right)$.
	\end{theorem}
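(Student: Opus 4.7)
The plan is to prove the theorem via a container theorem of the following form: construct a family $\mathcal{C}$ of subsets of $Y$ --- the \emph{containers} --- such that (i) every $A \in \F_Y(m,s)$ is contained in some $C \in \mathcal{C}$, (ii) $|\mathcal{C}| \le 2^{o(s)}$, and (iii) every $C \in \mathcal{C}$ satisfies $|C| \le (m+\beta)/2$, with $\beta = \beta_G(m + o(m))$ as in the theorem statement. Once (i)--(iii) are established, the desired bound follows immediately from
\[
|\F_Y(m,s)| \;\le\; \sum_{C \in \mathcal{C}} \binom{|C|}{s} \;\le\; 2^{o(s)} \binom{(m+\beta)/2}{s}.
\]

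The construction of $\mathcal{C}$ proceeds in two stages. In earlier work (Green--Morris, Campos), the hypergraph container lemma (HCL) is applied iteratively to a Schur-triple-type $3$-uniform hypergraph on $Y$ encoding the sumset constraint; each HCL call costs a fingerprint of size roughly $(m/s)\log n$, and $\Theta(\log n)$ calls are needed to shrink the container to size $\Theta(m)$, giving a cumulative cost of $2^{\Theta((m/s)(\log n)^2)}$. This is $2^{o(s)}$ only when $m \ll s^2/(\log n)^3$, which is the source of Campos' bound. To shave the extra $\log n$, I would first run a \emph{preprocessing graph container lemma}: define an auxiliary graph $\G$ on $Y$ whose edges encode pairs of elements with a ``popular'' sum (suitably calibrated to the constraint $|A+A| \le m$), and apply a Kleitman--Winston / Sapozhenko-style graph container argument to $\G$. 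For each $A \in \F_Y(m,s)$ this produces a fingerprint $T_1(A) \subseteq A$ of size $o(s/\log n)$ together with a first-stage container $C_1 \supseteq A \setminus T_1(A)$ of size $O(m)$. Since $\binom{n}{o(s/\log n)} = 2^{o(s)}$, this step contributes only $2^{o(s)}$ many first-stage containers.

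The second stage applies the HCL on the drastically reduced ground set $C_1$ (of size $O(m)$) only $O(1)$ times. Each such call contributes a fingerprint of size $O(m/s)$ at a cost of $\binom{O(m)}{O(m/s)} = 2^{O((m/s)\log s)}$, which is $2^{o(s)}$ since $(m/s)\log s \le s(\log s)/(\log n)^2 = o(s)$ in our range $m \ll s^2/(\log n)^2$. Each HCL iteration removes elements of $C_1$ with insufficient additive interaction with $A$; after $O(1)$ iterations the container inherits enough arithmetic structure (popular sums with essentially every element, doubling $2+o(1)$) that a Freiman-type inequality for arbitrary abelian groups forces $|C| \le (m + \beta_G(m + o(m)))/2$. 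This extremal size is exactly matched by the construction $H + P$ where $H$ is a subgroup of size $\beta$ and $P$ is an arithmetic progression in cosets of $H$, so the bound is sharp.

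I expect the main obstacle to be achieving the \emph{sharp constant} $(m+\beta)/2$ in the final container size: standard container plus Freiman arguments typically yield only $(1+\varepsilon)(m+\beta)/2$ for a fixed $\varepsilon>0$, and forcing $\varepsilon \to 0$ requires the HCL error parameters to tend to $0$ slowly enough that the fingerprint cost remains $2^{o(s)}$, while still fast enough that $O(1)$ iterations suffice after the graph preprocessing. A second subtlety is that the transition from $\mathbb{Z}$ to arbitrary abelian groups forces the analysis of the container to be sensitive to finite subgroups of $G$ and their cosets (the source of the $\beta$ term), which complicates the choice of ``popular sum'' threshold in the preprocessing graph $\G$ and the interpretation of the Freiman-type input in the refinement stage; and the adapted asymmetric HCL promised in the abstract must be set up so that the preprocessing and refinement steps carry over cleanly to the asymmetric sumset setting used elsewhere in the paper.
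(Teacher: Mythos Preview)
Your high-level strategy --- preprocess with a graph container step to shrink the ambient set from $n$ to $O(m)$, then run a bounded number of hypergraph-container iterations to push the container down to $(1+o(1))(m+\beta)/2$ --- is exactly the paper's approach, and you have correctly identified this two-stage architecture as the source of the $\log n$ saving over Campos.

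A few points where your description diverges from the paper's implementation are worth flagging. First, the preprocessing graph is not an ordinary graph on $Y$ with ``popular-sum'' edges; it is a \emph{bipartite} graph $H^F(U_0,U_1)$ with $U_0 \subseteq Y+Y$ and $U_1 \subseteq Y$, where $F$ is itself a small fingerprint of $A$ (of size roughly $\eps^{-2}\sqrt{m}$), and the container algorithm traverses only the $U_1$ side. This bipartite setup is what gives a clean container of size $(1+\delta)m$ in a single shot. Second, the refinement stage does not use a constant number of iterations: one takes $\eps = \eps(n) \to 0$ and runs $O(\eps^{-2})$ iterations of the shrinking lemma, each with fingerprint size $\sqrt m$ and cost $n^{O(\sqrt m)}$; the total cost $n^{O(\eps^{-2}\sqrt m)}$ is $2^{O(\eps s)}$ precisely for the choice $\eps = (\sqrt m \log n / s)^{1/3}$. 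Third, the paper never produces containers of size exactly $(m+\beta)/2$: it obtains $|C_1|+|C_2| \le (1+2\eps)(m+\beta)$ via a \emph{supersaturation} lemma (a Kneser-type statement, not Freiman's theorem) and then absorbs the $(1+2\eps)$ factor into the $2^{o(s)}$ error via a binomial inequality of the form $\binom{(1+2\eps)x}{s} \le e^{O(\sqrt\eps\, s)}\binom{x}{s}$. Your ``expected obstacle'' about the sharp constant is thus resolved not by sharpening the container, but by this post-processing estimate.
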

    A construction pointed out in~\cite{campos2020number} shows that our theorem is tight for many abelian groups (up to the $2^{o(s)}$ factor). The main tool behind our result is a more efficient container-type theorem (see Theorem~\ref{thm:simple-containers}), developed for sets with small sumset, which gives an essentially optimal collection of containers. The proof of this combines an adapted hypergraph container lemma following a novel ``preprocessing'' graph container lemma, which reduces the number of times the hypergraph container lemma needs to be applied by a logarithmic factor compared to previous methods.

    Campos~\cite{campos2020number} also studied the typical structure of sets in $\F_{[n]}(m,s)$ for the classical case where $G = \Z$ and $Y = [n]$.
	Namely, he showed that for $m \ll \frac{s^2}{(\log n)^3}$ a uniformly chosen random element $A$ of the family $\F_{[n]}(m,s)$, with high probability, 
	is almost completely (up to $o(s)$ elements) contained in an arithmetic progression of size $m/2 +o(m)$.  
	For the range where $m/s =O(1)$ and $(\log n)^4 < s = o(n)$, 
	a much more accurate structural result was achieved in~\cite{campos2022typical}. 
	More precisely, in~\cite{campos2022typical} they prove that with probability $1-\varepsilon$ the random subset $A$ is {\em fully} contained in an arithmetic progression of size $m/2+O_{\varepsilon, m/s}(1)$ and they also show that the $O(1)$ term is necessary. 

	In our next theorem, we extend the range of validity of Campos' structural theorem by a factor of $\log n$. 
	
	\begin{theorem}\label{thm:symmetric-typical-structure}
		Let $n, m=m(n), s=s(n) \in \mathbb{N}_{\ge 1}$ be integers such that $m \ll \frac{s^2}{(\log n)^2}$ as $n \rightarrow \infty$. If $A$ is chosen uniformly at random from $\F_{[n]}(m,s)$ then with high probability there exists an arithmetic progression $P$ of size $|P| \le m/2 + o(m)$ such that $|A \setminus P| = o(s)$.
	\end{theorem}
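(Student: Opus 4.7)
The plan is to derive the typical structure from the efficient container theorem (Theorem~\ref{thm:simple-containers}) underlying Theorem~\ref{thm:symmetric-refined}. Since $G = \mathbb{Z}$ gives $\beta_{\mathbb{Z}}(m) = 1$, the AP-based lower bound $|\F_{[n]}(m,s)| \ge \binom{\lfloor (m+1)/2 \rfloor}{s}$ combined with Theorem~\ref{thm:symmetric-refined} yields $|\F_{[n]}(m,s)| = 2^{o(s)} \binom{(m+1)/2}{s}$. It therefore suffices to show that the number of \emph{atypical} sets, i.e., those $A \in \F_{[n]}(m,s)$ for which no arithmetic progression $P$ with $|P| \le m/2 + o(m)$ satisfies $|A \setminus P| = o(s)$, is of order $o\bigl(\binom{m/2}{s}\bigr)$.

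First I would apply Theorem~\ref{thm:simple-containers} to $\F_{[n]}(m,s)$ to obtain a family $\mathcal{C}$ of at most $2^{o(s)}$ containers, each of size at most $m/2 + o(m)$, with $\F_{[n]}(m,s) \subseteq \bigcup_{C \in \mathcal{C}} \binom{C}{s}$. Next I would classify each $C \in \mathcal{C}$ as \emph{AP-like} if there is an AP $P_C \subseteq [n]$ with $|P_C| \le m/2 + o(m)$ and $|C \setminus P_C| = o(s)$, and \emph{rogue} otherwise. Any $A$ lying in an AP-like container is automatically typical, since $|A \setminus P_C| \le |C \setminus P_C| = o(s)$; so only rogue containers need to be controlled.

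For a rogue container $C$, by definition every AP of length at most $m/2 + o(m)$ misses at least a $\delta$-fraction of $C$, for some $\delta > 0$. Exploiting a Freiman-type stability property, namely that any $s$-subset $A \subseteq C$ with $|A+A| \le m$ is itself essentially contained in some AP of length at most $m/2 + o(m)$, forces most elements of such an $A$ to lie in a sub-collection of $C$ of size at most $(1 - \delta)|C|$. A routine binomial estimate then gives $|\F_{[n]}(m,s) \cap \binom{C}{s}| \le 2^{-\Omega(s)} \binom{m/2}{s}$, and summing over the $2^{o(s)}$ rogue containers yields the desired $o\bigl(\binom{m/2}{s}\bigr)$ bound.

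The main obstacle is the Freiman-type stability input needed for the rogue case: in the full regime $m \ll s^2/(\log n)^2$ the doubling factor $m/s$ can grow with $n$, so Freiman's $3k-4$ theorem is insufficient, and one instead needs a structural statement extracted from the same hypergraph container machinery behind Theorem~\ref{thm:simple-containers}. In fact, if Theorem~\ref{thm:simple-containers} is phrased so that each output container is a priori close to an AP---which is the natural meaning of the ``essentially optimal'' containers advertised in the introduction---then the rogue class is vacuous and Theorem~\ref{thm:symmetric-typical-structure} becomes a direct corollary of the container theorem.
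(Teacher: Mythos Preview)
Your proposal has a genuine gap in the handling of the ``rogue'' containers, and a more subtle issue with the ``AP-like'' containers as well.

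First, your argument for rogue containers is circular: you invoke a ``Freiman-type stability property'' asserting that any $s$-subset $A \subseteq C$ with $|A+A| \le m$ is already close to an AP of length $m/2 + o(m)$ --- but this is precisely the conclusion of Theorem~\ref{thm:symmetric-typical-structure}. You acknowledge this and speculate that Theorem~\ref{thm:simple-containers} might directly output AP-like containers, but it does not. What Theorem~\ref{thm:simple-containers} actually supplies, beyond the size bound, is the ``additionally'' clause: either $|C_1|,|C_2|\le m/4$ or $e(\HH(C_0,C_1,C_2)) < \eps^2 |C_1||C_2|$. The paper's proof combines this edge-count condition with an external stability lemma (Lemma~\ref{lemma:stability}, due to Campos, Coulson, Serra, and W\"otzel) to deduce that containers of a certain ``balanced'' type $(b2)$ are close to arithmetic progressions; containers with $|C_1|+|C_2|$ too small or too unbalanced (types $(a)$ and $(b1)$) are not shown to be close to APs at all but are instead eliminated by direct counting. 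Your dichotomy AP-like/rogue does not match this structure, and you have no mechanism to produce the AP.

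Second, even once a container is known to be close to an AP $P$, the stability lemma only yields $|C_i \setminus P_i| \le \eps |C_i| = O(\eps m)$, and in the full regime $m \ll s^2/(\log n)^2$ one cannot choose $\eps$ so that $\eps m = o(s)$ while still keeping the number of containers at $2^{o(s)}$. Hence your sentence ``$|A\setminus P_C| \le |C\setminus P_C| = o(s)$'' fails: with your definition of AP-like (exceptional set of size $o(s)$), essentially all the ``good'' containers would be rogue. The paper closes this gap with a further \emph{bad-set} argument: letting $T$ denote the exceptional part of the container, one shows via a direct binomial estimate (Lemma~\ref{lemma:binomial-bad-sets}) that only an $e^{-\Omega(\eps s)}$ fraction of the $s$-subsets of the container can meet $T$ in more than $2^8 \eps s$ elements. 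This step is essential and is missing from your outline.
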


Note that, due to the presence of the error term $o(s)$, this does not directly imply our bound in Theorem~\ref{thm:symmetric-refined} on the counting problem.
Unlike Conjecture~\ref{conj:ABMS}, it may be true that Theorem~\ref{thm:symmetric-typical-structure} holds up to $m \le s^2/C$, for some large constant $C>0$, as this does not conflict with any known constructions that provide lower bounds for $|\mathcal{F}_{[n]}(m,s)|$.
In any case, we are now only a $(\log n)^2$ factor away from being best possible.

\subsection*{The unrefined problem.} Next we study the ``unrefined" family $\F_Y(m) = \cup_{s=0}^{m} \F_Y(m,s)$, where the size restriction is not present. Here our methods are suitable to provide satisfyingly sharp bounds in the whole range of the parameters for any abelian group $G$ and an $n$-subset $Y \se G$.  

A construction pointed out by Campos~\cite{campos2020number} shows that the size of $\F_Y(m)$ can be as large as $2^{\frac{m+\beta}{2}}$, where $\beta = \beta_G(m)$. 
If $m \gg (\log n)^4$, then one can show that this lower bound is tight up to a factor $2^{o(m)}$ by summing up the upper bounds on $|F_Y(m,s)|$ given by Theorem~\ref{thm:symmetric-refined} for $s \gg \sqrt{m}\log n$ and the number of all sets of size $O(\sqrt{m}\log n)$.
This can be extended down to $m \gg (\log n)^3$ by employing directly a container theorem of Campos (see Theorem 4.2 in~\cite{campos2020number}). Moreover, the same approach gives $|\F_Y(m)|\le 2^{O(\sqrt{m}(\log n)^{3/2})}$ in the complement range $m\ll (\log n)^3$.

In our next theorem we prove that, on the one hand, the upper bound $2^{\frac{m+\beta}{2}+o(m)}$ on $|\F_Y(m)|$, which is sharp up to the factor $2^{o(m)}$, is valid for $m \gg (\log n)^2$. 
On the other hand, we are able to couple this by an improvement of the above in the complementary range $m \ll (\log n)^2$. Namely, we show that in this range the lower bound provided by the trivial construction of taking every subset of size at most $\lfloor \sqrt{2m}\rfloor -1$ is tight up to a constant factor $\sqrt{2}$ in the exponent.   
Recall that all logarithms are in base $2$.
\begin{theorem}\label{thm:unrefined-counting}
	Let $n, m=m(n) \in \mathbb{N}_{\ge 1}$ and set $\alpha(n) = \left (\frac{\sqrt{m}}{\log n}\right )^{1/2}$. 
	Let $G$ be an abelian group and $Y$ be an $n$-element subset of $G$.
	Then, we have
	\[|\F_Y(m)| \le \begin{cases} 2^{(2+3\alpha)\sqrt{m}\log n + m} & \text{ if } m \le (\log n)^2, \\ 2^{\frac{m+\beta}{2} + 2^{13} \alpha^{-2/3} m} & \text{ if } m > (\log n)^2,
	 \end{cases} \]
  where $\beta=\beta_{G}\left(m+\alpha^{-2/3}m\right)$. In particular, if $m \ll (\log n)^2$, then $|\F_Y(m)| \le n^{(2+o(1))\sqrt{m}}$, 
	 and if $m \gg (\log n)^2$, then $|\F_Y(m)| \le 2^{\frac{m+\beta}{2} + o(m)}$.
\end{theorem}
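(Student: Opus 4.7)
The plan is to apply the container-type theorem developed in the paper (Theorem~\ref{thm:simple-containers}), which for a chosen slack $\eta > 0$ provides a collection $\mathcal{C} = \mathcal{C}(\eta)$ of subsets of $Y$ such that (i) every $A \in \F_Y(m)$ lies in some $C \in \mathcal{C}$, (ii) each $C$ satisfies $|C+C| \le (1+\eta)m$, and (iii) $|\mathcal{C}|$ is small, with explicit dependence on $\eta$ and $\alpha$. Containment immediately gives $|\F_Y(m)| \le |\mathcal{C}| \cdot \max_{C \in \mathcal{C}} 2^{|C|}$, so the two bounds of the theorem will follow from a suitable choice of $\eta$ in each regime, together with a bound on the typical size of $|C|$.

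In the large range $m > (\log n)^2$, equivalently $\alpha > 1$, I would set $\eta = \alpha^{-2/3}$, so each container $C$ has $|C+C| \le m + \alpha^{-2/3}m$ --- the exact range appearing in the definition of $\beta = \beta_G(m + \alpha^{-2/3}m)$. A Freiman-type inverse theorem, valid in arbitrary abelian groups, then gives $|C| \le (m+\beta)/2 + O(\alpha^{-2/3}m)$, since an essentially-optimal set with small sumset must sit close to either a coset of a subgroup of size $\beta$ or an arithmetic progression of length $\approx m/2$. Meanwhile the container theorem supplies $|\mathcal{C}| \le n^{(2+o(1))\sqrt{m}} = 2^{O(\sqrt{m}\log n)}$, and the identity $\sqrt{m}\log n = m/\alpha^{2}$ combined with $\alpha > 1$ gives $\sqrt{m}\log n \le \alpha^{-2/3}m$, so this factor is absorbed. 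Choosing the constant $2^{13}$ large enough to swallow all implied constants yields $|\F_Y(m)| \le 2^{(m+\beta)/2 + 2^{13}\alpha^{-2/3}m}$.

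In the small range $m \le (\log n)^2$, equivalently $\alpha \le 1$, the Freiman-type bound is not useful (the saving $(m-\beta)/2$ would be dominated by the slack loss), so I would instead use the trivial container size bound $|C| \le |C+C| \le (1+\eta)m \le m$ (after a minor truncation since any $A \in \F_Y(m)$ has $|A| \le m$). The container theorem with parameters tuned for this regime provides $|\mathcal{C}| \le n^{(2+3\alpha)\sqrt{m}}$, where the factor $3\alpha$ reflects the minimum slack the container theorem can afford when $\alpha$ is bounded. Combining, $|\F_Y(m)| \le n^{(2+3\alpha)\sqrt{m}} \cdot 2^{m} = 2^{(2+3\alpha)\sqrt{m}\log n + m}$. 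The two asymptotic ``in particular'' statements then follow immediately: when $m \ll (\log n)^2$ we have $\alpha \to 0$ and $m = o(\sqrt{m}\log n)$, giving $n^{(2+o(1))\sqrt{m}}$; when $m \gg (\log n)^2$ we have $\alpha \to \infty$, so $\alpha^{-2/3} \to 0$ and the error term is $o(m)$.

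The main obstacle is not in this summation step but in establishing the precise quantitative form of the container theorem itself: namely, that $|\mathcal{C}(\eta)| \le n^{(2+3\alpha)\sqrt{m}}$ with the sharp constant in the small regime, and that the Freiman-type structural control $|C| \le (m+\beta)/2 + O(\eta m)$ holds uniformly over all abelian groups with the linear-in-$\eta$ error. Establishing both will require careful bookkeeping through the graph and hypergraph container lemmas highlighted in the abstract, with parameters tuned as explicit functions of $\alpha$.
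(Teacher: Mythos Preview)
Your proposal rests on a misreading of Theorem~\ref{thm:simple-containers}: it does \emph{not} produce containers $C$ with $|C+C|\le(1+\eta)m$. What it gives (for the symmetric application $A=B$) is a triple $(C_0,C_1,C_2)$ with $A\subseteq C_1\cap C_2$ and the \emph{size} bound $|C_1|+|C_2|\le(1+2\eps)(m+\beta)$ already built in; the $\beta$ enters inside the proof of the container theorem via the supersaturation lemma (Lemma~\ref{lemma:supersaturation}), not via any post-hoc Freiman-type step. Your appeal to a ``Freiman-type inverse theorem, valid in arbitrary abelian groups'' that would deliver $|C|\le(m+\beta)/2+O(\eta m)$ from $|C+C|\le(1+\eta)m$ is both unnecessary and unjustified---no standard inverse theorem yields that precise inequality. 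The paper instead uses $|C_1\cap C_2|\le(|C_1|+|C_2|)/2\le(1+2\eps)(m+\beta)/2$ directly.

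Two further gaps. First, the container theorems only apply to sets $A$ of size at least roughly $\eps^{-2}\sqrt{m}$ (resp.\ $(1+\alpha)\sqrt{m}$); the sets below this threshold must be split off and counted trivially by $\binom{n}{\le t}$, a step you omit entirely. Second, the quantitative form of the container count is $|\C|\le n^{2^7\eps^{-2}\sqrt{m}}$, not $n^{(2+o(1))\sqrt{m}}$; in the large-$m$ regime the paper sets $\eps=\alpha^{-2/3}/4$ and uses the identity $\eps^{-2}\sqrt{m}\log n=2^6\eps m$ to absorb this into the $2^{13}\alpha^{-2/3}m$ error, while in the small-$m$ regime the paper does not use Theorem~\ref{thm:simple-containers} at all but the simpler Lemma~\ref{lemma:first-container} with $s=\lambda=\lceil(1+\alpha)\sqrt{m}\rceil$ and $\delta=\alpha^{-1}$, giving containers of size $\le(1+\alpha^{-1})m$ and count $\binom{n}{\lambda}^2$. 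Your claimed sharp constant $n^{(2+3\alpha)\sqrt{m}}$ arises from this lemma together with the identity $\alpha^{-1}m=\alpha\sqrt{m}\log n$, not from Theorem~\ref{thm:simple-containers}.
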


We note that when $G = \mathbb{Z}$ and $Y = [n]$, similar bounds to those in Theorem~\ref{thm:unrefined-counting} can also be deduced by combining the results of Green~\cite{green2005counting} (see Proposition 23 in~\cite{green2005counting}) and Green and Morris~\cite{green2016counting} (see Theorem 1.1 in~\cite{green2016counting}).
The cutoff $(\log n)^2$ arises naturally in this context because $|\mathcal{F}_{[n]}(m)| \ge \max 
\left \{ \binom{n}{\lfloor \sqrt{2m} \rfloor -1}, 2^{\frac{m}{2}} \right \}$.
Specifically, the binomial term dominates when $m \le c(\log n)^2$, while the exponential term takes over for $m>c(\log n)^2$, for some constant $c>0$.

\subsection*{The asymmetric setup}

In 2023, Campos, Coulson, Serra, and Wötzel~\cite{campos2023typical} investigated an asymmetric version of the counting problem. 
	Given integers $s,s',m,n$, an abelian group $G$ and an $n$-element subset $Y \se G$,
	let $\F_Y(m,s,s')$ be the set of pairs $(A,B) \in \binom{Y}{s} \times \binom{Y}{s'}$  such that $|A+B|\le m$.

 To grasp the behavior of $|\F_Y(m,s,s')|$, we again first consider the case when $G = \mathbb{Z}$ and $Y = [n]$. A simple constructive lower bound is given by the pairs $(A,B)$ where
$A$ is an $s$-subset in the interval $[x]$ and $B$ is an $s'$-subset in the interval $[m-x]$. Assuming without loss of generality that $s\leq s'$ and setting $x = \left\lfloor \frac{sm}{s+s'}\right\rfloor$, we obtain that
	\begin{align}\label{eq:lower-bound-F-asym}
		|\F_{[n]}(m,s,s')| \ge \mychoose{x}{s} \mychoose{m-x}{s'}.
	\end{align}

	 In~\cite{campos2023typical} it is shown that if $s' = \Theta(s)$ and $s+s' \le m \ll \frac{s^2}{(\log n)^3}$, then \eqref{eq:lower-bound-F-asym} is best possible up to a factor $2^{o(s)}$. In the case where $m = s+s' + o(s)$ this already follows from a structural theorem of Lev and Smeliansky~\cite[Theorem 2]{lev1995addition} for $G = \mathbb{Z}$ and $Y = [n]$. 
  
Our next theorem extends by a $\log n$-factor the range of validity of the result of  Campos, Coulson, Serra, and Wötzel~\cite{campos2023typical}.
Similarly to~\cite{campos2023typical}, our result also generalises to arbitrary abelian groups.

\begin{theorem}\label{thm:asymmetric-counting}
	Let $n, m=m(n), s=s(n), s'=s'(n) \in \mathbb{N}_{\ge 1}$ be integers such that $ s' = \Theta (s)$ and $s+s' \le m \ll \frac{s^2}{(\log n)^2}$ as $n\rightarrow \infty$. Let $G$ be an arbitrary abelian group and $Y$ be an $n$-element subset of $G$.
Then the number of pairs $(A,B) \in \binom{Y}{s} \times \binom{Y}{s'}$ of subsets  with $|A+B| \le m$ is at most 
		$$2^{o(s)} \mychoose{\frac{(m+\beta)s}{s+s'}}{s} \mychoose{\frac{(m+\beta)s'}{s+s'}}{s'},$$
  where $\beta = \beta_G\left(m+o(m)\right)$.
\end{theorem}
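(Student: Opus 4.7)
The plan is to derive Theorem~\ref{thm:asymmetric-counting} from the asymmetric analogue of the improved container theorem (Theorem~\ref{thm:simple-containers}), Kneser's inequality, and a standard convexity estimate.

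First, I would invoke the asymmetric version of Theorem~\ref{thm:simple-containers} to obtain a family $\mathcal{C}$ of container pairs $(C_A, C_B) \in 2^Y \times 2^Y$ satisfying: (i) $|\mathcal{C}| \le 2^{o(s)}$; (ii) for every $(A,B) \in \binom{Y}{s} \times \binom{Y}{s'}$ with $|A+B| \le m$, there exists $(C_A, C_B) \in \mathcal{C}$ with $A \subseteq C_A$ and $B \subseteq C_B$; and (iii) $|C_A + C_B| \le m + o(m)$ for each $(C_A,C_B) \in \mathcal{C}$. The near-optimal bound in (i), which was not available in the work of Campos, Coulson, Serra, and Wötzel~\cite{campos2023typical}, is what yields the advertised $\log n$-factor improvement in the admissible range of $m$.

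Next, for each container pair $(C_A, C_B) \in \mathcal{C}$, I would bound $|C_A| + |C_B|$ using Kneser's theorem. Let $H$ denote the stabilizer of $C_A + C_B$ in $G$. Since $H + (C_A+C_B) = C_A+C_B$, we have $|H| \le |C_A+C_B| \le m + o(m)$, and therefore $|H| \le \beta_G(m+o(m)) = \beta$. Kneser's inequality $|C_A+C_B| \ge |C_A + H| + |C_B+H| - |H| \ge |C_A| + |C_B| - |H|$ then yields
\[|C_A| + |C_B| \le |C_A + C_B| + |H| \le m + \beta + o(m).\]

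The third step is to count the pairs inside each container. The number of valid pairs $(A,B)$ with $A \subseteq C_A$, $B \subseteq C_B$ is at most $\binom{|C_A|}{s}\binom{|C_B|}{s'}$. A routine convexity argument (applying Lagrange multipliers to the Stirling approximation, or using log-concavity of binomials) shows that, subject to the constraint $|C_A| + |C_B| \le M$ with $M := m+\beta+o(m)$, this product is maximised, up to $2^{o(s)}$ factors, at $|C_A| = Ms/(s+s')$ and $|C_B| = Ms'/(s+s')$. Combining this with the elementary estimate $\binom{N(1+o(1))}{k} \le 2^{o(k)}\binom{N}{k}$ (valid since the hypotheses $s' = \Theta(s)$ and $s+s' \le m \ll s^2/(\log n)^2$ ensure $N = \Theta(m) = \Omega(s)$ with $N - k = \Omega(N)$) gives
\[\binom{|C_A|}{s}\binom{|C_B|}{s'} \;\le\; 2^{o(s)} \binom{\tfrac{(m+\beta)s}{s+s'}}{s} \binom{\tfrac{(m+\beta)s'}{s+s'}}{s'}.\]
Summing over the $2^{o(s)}$ container pairs yields the claimed bound.

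The main obstacle is the first step: producing an asymmetric container family whose size absorbs only a $2^{o(s)}$ factor. This requires extending both the "preprocessing" graph container lemma and the adapted hypergraph container lemma of the paper to pairs of sets, so that the number of hypergraph container applications is reduced by a $\log n$-factor compared to~\cite{campos2023typical}. Steps two through four are comparatively routine and parallel the blueprint Campos~\cite{campos2020number} used to pass from a symmetric container theorem to a count of sets with small sumset.
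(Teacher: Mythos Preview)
Your overall strategy matches the paper's: apply Theorem~\ref{thm:simple-containers}, bound $\binom{|C_1|}{s}\binom{|C_2|}{s'}$ by a convexity/optimisation argument (the paper's Lemma~\ref{lemma:binomial-asymptotic-max}), absorb the $(1+o(1))$ perturbation (the paper's Lemma~\ref{lemma:binomial-approx}), and sum over containers. However, your step~(iii) mis-states what Theorem~\ref{thm:simple-containers} actually outputs. The theorem does \emph{not} give $|C_A+C_B|\le m+o(m)$; indeed the containers $C_1,C_2$ need not have small sumset at all, so this claim is generally false. What the theorem gives is directly $|C_1|+|C_2|\le (1+2\eps)(m+\beta)$ with $\beta=\beta_G((1+4\eps)m)$ --- exactly the inequality you then try to recover via Kneser. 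The Kneser detour is therefore both unnecessary and resting on an unavailable premise; the supersaturation/Kneser-type input is already baked into the container theorem through Lemma~\ref{lemma:supersaturation}.

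A minor point: your justification for $\binom{N(1+o(1))}{k}\le 2^{o(k)}\binom{N}{k}$ via ``$N-k=\Omega(N)$'' is not quite right, since the hypotheses allow $m$ close to $s+s'$, in which case $N=\tfrac{(m+\beta)s}{s+s'}$ can be $(1+o(1))s$. The paper handles this with the second inequality of Lemma~\ref{lemma:binomial-approx}, which gives a $e^{O(\sqrt{\eps}\,s)}$ factor valid for all $s\le d$, without needing $d-s$ to be large.
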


We note that one can adapt the construction in~\cite{campos2020number} to show that the upper bound from Theorem~\ref{thm:asymmetric-counting} cannot hold if 
$m \gg s^2/\log n$. In other words, similar to Theorem~\ref{thm:symmetric-refined}, we are a $\log n$-factor away from establishing the precise region of validity. 
Nevertheless note that Theorem~\ref{thm:asymmetric-counting} about the asymmetric setup does not imply much about the symmetric setup in Theorem~\ref{thm:symmetric-refined}, as there are much more asymmetric pairs $(A,B)$ than sets $A$ with sumset bounded by $m$.

A couple of constructions from~\cite{campos2023typical} are also relevant to our Theorem~\ref{thm:asymmetric-counting}. They imply that some error factor in the upper bound as well as some restriction between the proportion of $s$ and $s'$ are both necessary.

	In the case where $G = \Z$ and $Y = [n]$, the typical structure of sets in  $\F_{[n]}(m,s,s')$ was also studied in~\cite{campos2023typical}.
Namely, they showed that if $s' = \Theta(s)$ and $s+s'\le m \ll \frac{s^2}{(\log n)^{3}}$, then for a uniformly chosen random element $(A,B)$ of the family $\F_{[n]}(m,s,s')$, with high probability, $A$ and $B$ are almost completely (up to $o(s)$ elements) contained in some arithmetic progressions $P_A$ of size $\frac{sm}{s+s'} +o(m)$ and $P_B$ of size $\frac{s'm}{s+s'} +o(m)$, respectively.  
 Again, in the case where $m = s+s' + o(s)$, this is a consequence of a result of Lev and Smeliansky~\cite[Theorem 2]{lev1995addition}. 

	In our next theorem we extend the range of validity of the structural theorem of Campos, Coulson, Serra, and Wötzel~\cite{campos2023typical} by a factor of $\log n$. 

\begin{theorem}\label{thm:asymmetric-typical-structure}
Let $n, m=m(n), s=s(n), s'=s'(n) \in \mathbb{N}_{\ge 1}$ be integers such that $ s' = \Theta (s)$ and $s+s' \le m \ll \frac{s^2}{(\log n)^2}$ as $n\rightarrow \infty$.
If $(A,B) \in 2^{[n]} \times 2^{[n]}$ is a uniformly chosen random pair with $|A| = s$, $|B| = s'$ and $|A+B| \leq m$, then with high probability there exist arithmetic progressions $P_A, P_B \subseteq [n]$ of sizes
		\[ |P_A| \le \dfrac{sm}{s+s'} +o(m) \quad \text{and} \quad |P_B| \le \dfrac{s'm}{s+s'} +o(m)\]
		such that $|A \setminus P_A| + |B \setminus P_B| = o(s)$.
	\end{theorem}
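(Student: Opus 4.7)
The plan is to derive Theorem~\ref{thm:asymmetric-typical-structure} from a structural reading of the container theorem (Theorem~\ref{thm:simple-containers}) underlying Theorem~\ref{thm:asymmetric-counting}, following the ``counting implies structure'' paradigm used for the symmetric case by Campos~\cite{campos2020number} and for the previous $(\log n)^{-3}$-range by Campos, Coulson, Serra, and Wötzel~\cite{campos2023typical}. Fix $\varepsilon>0$; by a diagonalisation it suffices to prove that the family $\B_\varepsilon$ of \emph{$\varepsilon$-bad} pairs --- those pairs $(A,B)\in \F_{[n]}(m,s,s')$ admitting no APs $P_A,P_B\se[n]$ of sizes $\le sm/(s+s')+\varepsilon m$ and $\le s'm/(s+s')+\varepsilon m$ with $|A\setminus P_A|+|B\setminus P_B|\le\varepsilon s$ --- satisfies $|\B_\varepsilon|=o(|\F_{[n]}(m,s,s')|)$. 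Combined with the explicit lower bound $|\F_{[n]}(m,s,s')|\ge\binom{x}{s}\binom{m-x}{s'}$ from \eqref{eq:lower-bound-F-asym}, with $x=\lfloor sm/(s+s')\rfloor$, the task reduces to showing $|\B_\varepsilon|=o(\binom{x}{s}\binom{m-x}{s'})$.

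The bound on $|\B_\varepsilon|$ comes from a structural refinement of Theorem~\ref{thm:simple-containers}: each container pair $(C_A,C_B)$ it produces actually sits within an $o(s)$-perturbation of a pair of APs of the optimal sizes $sm/(s+s')+o(m)$ and $s'm/(s+s')+o(m)$, courtesy both of the additive clustering built into the preprocessing graph container lemma and of a Freiman--Ruzsa-type rigidity theorem for pairs with $|A+B|$ close to $|A|+|B|$ (such as the Lev--Smeliansky theorem~\cite{lev1995addition}). With this in hand, every $\varepsilon$-bad pair must be hosted by a container whose best-fitting AP misses at least $(\varepsilon/2)s$ elements of $A$ or $B$; a routine binomial coefficient comparison shows that such ``deviant'' containers house at most a $2^{-\Omega(\varepsilon s)}$ fraction of the pairs that AP-like containers of the same dimensions would contribute. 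Since the total count is at most $2^{o(s)}\binom{x}{s}\binom{m-x}{s'}$ by Theorem~\ref{thm:asymmetric-counting}, we conclude $|\B_\varepsilon|\le 2^{o(s)-\Omega(\varepsilon s)}\binom{x}{s}\binom{m-x}{s'}=o(\binom{x}{s}\binom{m-x}{s'})$, as required.

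The main obstacle is this structural upgrade of the container theorem --- proving that every container produced by Theorem~\ref{thm:simple-containers} is not only small in count but genuinely AP-close in shape --- which must be established by inspecting the construction itself rather than treated as a black box. A secondary technical point is aligning parameters so that the $2^{o(s)}$ slack in Theorem~\ref{thm:asymmetric-counting}, originating from the $(\log n)^2$-gap in the admissible range of $m$, is genuinely dominated by the $2^{\Omega(\varepsilon s)}$ gain obtained against deviant containers; this is manageable because $\varepsilon=\varepsilon(n)\to 0$ may be chosen to decay as slowly as we wish, ensuring compatibility with all hidden error terms inherited from the container machinery.
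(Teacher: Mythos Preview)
Your overall strategy --- use the container theorem, compare against the lower bound~\eqref{eq:lower-bound-F-asym}, and show that pairs far from the AP structure are negligibly few --- matches the paper's. But the specific mechanism you propose for the ``structural upgrade'' is not correct, and this is where the real work lies.

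You assert that \emph{every} container pair produced by Theorem~\ref{thm:simple-containers} is AP-close, and suggest obtaining this by ``inspecting the construction itself'' and invoking ``additive clustering built into the preprocessing graph container lemma''. Neither claim holds. The graph container lemma (Lemma~\ref{lemma:first-container}) only gives $|C_i|\le(1+\delta)m$; it carries no arithmetic-progression information whatsoever. And the paper does \emph{not} show that all containers are AP-close. Instead, it sorts the containers into three types: (a) those with $|C_1|+|C_2|\le(1-\eps)m$; (b1$'$) those with $|C_1|+|C_2|$ near $m$ but unbalanced relative to $s,s'$; and (b2$'$) those with balanced sizes. Types (a) and (b1$'$) may have no AP structure at all, but each is shown separately to host $e^{-\Omega(\eps(s+s'))}|\F^{\mathrm{A}}|$ pairs via binomial estimates (Lemmas~\ref{lemma:binomial-approx} and~\ref{lemma:binomial-approx-4}). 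Only the dominant type (b2$'$) is AP-close, and this is obtained not by opening the container algorithm but by using the \emph{additional} output of Theorem~\ref{thm:simple-containers}: the bound $e(\HH(C_0,C_1,C_2))<\eps^2|C_1||C_2|$. Combined with the balanced sizes, this is precisely the hypothesis of the stability lemma of Campos--Coulson--Serra--W\"otzel (Lemma~\ref{lemma:stability}), whose conclusion furnishes the exceptional sets $T_i\subseteq C_i$ with $|T_i|\le\eps|C_i|$ and APs $P_i\supseteq C_i\setminus T_i$.

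One further point: the exceptional sets have size $O(\eps m)$, not $o(s)$, so you cannot simply inherit AP-closeness for $(A,B)$ from AP-closeness of the container. The paper defines a pair to be bad when $|A\cap T_1|$ or $|B\cap T_2|$ exceeds $2^8\eps(s+s')$, and bounds the number of such pairs per container by a separate binomial calculation (Lemma~\ref{lemma:binomial-bad-sets}). Your proposal elides this step. Finally, the Lev--Smeliansky theorem is used only to dispose of the boundary case $m=(1+o(1))(s+s')$, not to extract AP structure from containers in the main regime.
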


Note, again, that the presence of the $o(s)$ term in the size of the arithmetic progression prevents to derive the bound in Theorem~\ref{thm:asymmetric-counting} directly. 
Moreover, since the pair $(A,B)$ is chosen uniformly at random in $\F_{[n]}(m,s,s')$, which is a much larger family than $\F_{[n]}(m)$, Theorem~\ref{thm:asymmetric-typical-structure} has no direct implications for Theorem~\ref{thm:symmetric-typical-structure}.

\subsection*{The asymmetric setup --- unrefined.} 
Our methods can also be used to give bounds on the asymmetric version of the unrefined family $\F_Y(m)$.
	For a group $G$ and a subset $Y \se G$, let $\F_Y^{\mathrm{asym}}(m)$ be the family of all pairs $(A,B) \in 2^Y \times 2^Y$ such that $|A+B| \le m$.
	By taking $A$ to be a set of size one and $B$ to be any set of size at most $m$ in $Y$, it follows that 
	$|\F_Y^{\mathrm{asym}}(m)| \ge n\binom{n}{\le m}$,
    where $\binom{n}{\le m}\coloneqq \sum_{k=0}^{m}\binom{n}{k}$.
    Our next theorem shows that this is best possible up to an error factor of $\binom{n}{\le m}^{o(1)}$  
 when $1 \ll m \ll n$.
 
	We also show that the biggest contribution to $\F_Y^{\mathrm{asym}}(m)$ comes from very asymmetric pairs. Namely, if we additionally assume a lower bound $\sqrt{m}$ on the sizes of the sets $A$ and $B$, then we obtain bounds that are analogous to the ones in Theorem~\ref{thm:unrefined-counting}. 
    In particular, these will show that the contribution of these pairs to $\F_Y^{\mathrm{asym}}(m)$ is negligible.
    To state these results we define $\F_{Y}^{\mathrm{asym}}(m, \ge s)$ to be the family of pairs $(A,B) \in \F_Y^{\mathrm{asym}}$ such that $|A|, |B| \ge s$.

	\begin{theorem}\label{thm:unrefined-counting-part-2}
            Let $n, m=m(n) \in \mathbb{N}_{\ge 1}$ be integers such that $m \le 2^{-4}n$.
            Let $G$ be an abelian group and $Y$ be an $n$-element subset of $G$.	Then, we have
			\[|\F_Y^{\mathrm{asym}}(m)| \le n^{6\sqrt{m}}\binom{n}{\le m}.\]
Furthermore, let $s=s(n)\ge\sqrt{m}$ be an integer, then
			\[|\F_{Y}^{\mathrm{asym}}(m,\ge s)| \le \begin{cases} n^{(4+6\alpha)\sqrt{m}} & \text{ if } m \le (\log n)^2, \\ 2^{(1+2^{13}\alpha^{-2/3})(m+\beta)} & \text{ if } m > (\log n)^2,
	 		\end{cases} \]
    where $\alpha(n)=\left(\frac{\sqrt{m}}{\log{n}}\right)^{1/2}$ and $\beta = \beta_{G}\left(m+\alpha^{-2/3}m\right)$.
	\end{theorem}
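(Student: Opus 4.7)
The plan is to reduce everything to the refined bound on $|\F_Y^{\mathrm{asym}}(m,\ge s)|$, and then to derive the general bound by splitting off the highly asymmetric pairs.

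I would first establish the refined bound by invoking the paper's asymmetric container theorem (the tool promised in the abstract and developed earlier in the paper). Such a theorem should deliver a family $\mathcal{C}$ of container pairs $(C_A,C_B)$ such that every $(A,B)$ with $|A|,|B|\ge s$ and $|A+B|\le m$ satisfies $A\subseteq C_A$ and $B\subseteq C_B$ for some $(C_A,C_B)\in\mathcal{C}$, and moreover the containers themselves are small: $|C_A|+|C_B|$ is essentially bounded by $m+\beta$ with an additive error that is $o(m)$ once $m\gg(\log n)^2$. This yields
\[|\F_Y^{\mathrm{asym}}(m,\ge s)|\;\le\;|\mathcal{C}|\cdot\max_{(C_A,C_B)\in\mathcal{C}}2^{|C_A|+|C_B|},\]
and a case split on $m$ versus $(\log n)^2$ — mirroring the symmetric analysis behind Theorem~\ref{thm:unrefined-counting} — produces the two stated estimates. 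In the regime $m\le(\log n)^2$, both $|\mathcal{C}|$ and the per-container contribution are of the form $n^{O(\sqrt m)}$, combining to $n^{(4+6\alpha)\sqrt m}$. In the regime $m>(\log n)^2$, the dominant factor is $2^{|C_A|+|C_B|}\le 2^{m+\beta}$ while the container count contributes the error $2^{2^{13}\alpha^{-2/3}(m+\beta)}$, yielding $2^{(1+2^{13}\alpha^{-2/3})(m+\beta)}$.

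For the general bound on $|\F_Y^{\mathrm{asym}}(m)|$, I would partition the family according to $\min(|A|,|B|)$. Pairs with $\min(|A|,|B|)\ge\sqrt m$ are controlled directly by the refined bound just derived, applied with $s=\sqrt m$. Pairs with $\min(|A|,|B|)<\sqrt m$, say $|A|<\sqrt m$ with the other case symmetric, are enumerated by first choosing the nonempty set $A$ in at most $\binom{n}{\le\sqrt m}\le n^{\sqrt m}$ ways, and then choosing $B$ in at most $\binom{n}{\le m}$ ways, using that $|B|\le|A+B|\le m$ whenever $A$ is nonempty. A short comparison, valid under the hypothesis $m\le 2^{-4}n$ and exploiting the lower bound $\binom{n}{\le m}\ge n^{c\sqrt m}$ (obtained by estimating $\binom{n}{\lfloor m\rfloor}$), shows that summing the two contributions gives at most $n^{6\sqrt m}\binom{n}{\le m}$.

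The main obstacle, and the heart of the argument, is the asymmetric container theorem itself: producing a family $\mathcal{C}$ of the right size whose container pairs are of the right quality. This is precisely where the paper's novel ``preprocessing'' graph container lemma, combined with the adapted hypergraph container lemma, is essential: it saves a logarithmic factor in the number of hypergraph-container calls relative to the methods of Campos~\cite{campos2020number}, which is exactly what is needed to push the exponent down to $(1+o(1))(m+\beta)$ in the large-$m$ regime and to $O(\sqrt m)$ in the small-$m$ regime. Once this container theorem is in hand, the remainder of the argument is a routine partition-and-sum.
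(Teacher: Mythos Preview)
Your overall architecture (containers plus a size-based split) matches the paper, but two details prevent the plan from delivering the stated constants.

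First, the full container theorem (Theorem~\ref{thm:simple-containers}) is the wrong tool for the general bound and for the regime $m\le(\log n)^2$. Its collection has size $n^{2^{7}\eps^{-2}\sqrt m}$, which for any admissible $\eps<1/4$ is already at least $n^{2^{11}\sqrt m}$, far from $n^{(4+6\alpha)\sqrt m}$. The paper instead applies the \emph{first} container lemma (Lemma~\ref{lemma:first-container}) directly here: with $s=\lceil\sqrt m\rceil$, $\lambda=\lceil(1+\delta^{-1})\sqrt m\rceil$ one gets only $\bigl(\binom{n}{s}\binom{n}{\lambda}\bigr)^2\le n^{(4+2\delta^{-1})\sqrt m}$ container pairs, each of size at most $2(1+\delta)m$. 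Tuning $\delta=\alpha^{-1}$ yields the $n^{(4+6\alpha)\sqrt m}$ bound, and setting $\delta=1$ yields the general bound (using $2^{4m}\le\binom{n}{m}$ when $m\le 2^{-4}n$). Only in the regime $m>(\log n)^2$ does the paper switch to Theorem~\ref{thm:simple-containers} to push the container size down to $(1+2\eps)(m+\beta)$. Relatedly, your plan to derive the general bound from the refined one fails near the threshold: at $m\approx(\log n)^2$ the refined bound carries an error factor of order $2^{2^{13}m}$, which is not absorbed by $n^{6\sqrt m}\binom{n}{\le m}$.

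Second, Theorem~\ref{thm:simple-containers} does not cover all pairs with $|A|,|B|\ge\sqrt m$: it requires the larger set to have size at least $(1+\eps^{-2})\sqrt m$. So when you bound $\F_Y^{\mathrm{asym}}(m,\ge s)$ you must also dispose of the pairs with both $|A|,|B|\in[\sqrt m,(1+\eps^{-2})\sqrt m)$, which the paper handles by the trivial estimate $n^{2(1+\delta^{-1})\sqrt m}$. This is easy, but it is not ``directly'' covered by the container theorem as you claim.
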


    We note that Alon, Granville, and Ubis~\cite{alon2010sumsets} explored a related problem, providing exact asymptotics for the number of distinct sumsets $A+B$ in $\mathbb{F}_p$, where $|A|, |B| \gg 1$ as $p$ tends to infinity, and without imposing any size constraints on the sumset.

\subsection*{Our asymmetric container theorem}
The main tool behind the proof of all our theorems is the container theorem below. 

 For a subset $Y$ in an abelian group $G$ and subsets $U_0 \se Y+Y$ and $U_1, U_2 \se Y$, we define $\HH(U_0,U_1,U_2)$ to be the tripartite 3-uniform hypergraph with parts $U_0, U_1$ and $U_2$, in which vertices $a \in U_1$, $b \in U_2$ and $c \in U_0$ are connected if and only if $c = a+b$.
 For simplicity, we also denote $(A+B)^c \coloneqq (Y+Y) \setminus (A+B)$.

	\begin{theorem}\label{thm:simple-containers}
            Let $n,m \in \mathbb{N}_{\ge 1}$ and $0<\eps<1/4$.
            Let $G$ be an arbitrary abelian group, $Y$ be an $n$-element subset of $G$ and $\beta = \beta_G((1+4\eps)m)$.
		There exists a collection $\C \se 2^{Y+Y} \times 2^Y \times 2^Y$ of size at most $n^{2^{7}\eps^{-2}\sqrt{m}}$ such that the following holds. For every pair $(A,B) \in 2^Y \times 2^Y$ with $|A|\ge \sqrt{m},|B| \ge (1+\eps^{-2})\sqrt{m}$ and $|A+B|\le m$ there exists $(C_{0},C_{1},C_{2}) \in \C$ such that 
		\begin{enumerate}
			\item $(A+B)^c \se C_0$, $A \se C_1$ and $B \se C_2$;
            \vspace{1mm}
			\item $|C_{1}|+|C_2|\le (1+2\eps)(m + \beta)$.
		\end{enumerate}
        Additionally, we have $|C_{1}|,|C_2| \le m/4$ or $e(\HH(C_0,C_1,C_2)) < \eps^2|C_{1}||C_2|$.
	\end{theorem}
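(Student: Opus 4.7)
The plan is to construct the collection $\C$ by a two-phase iterative procedure. Phase one is a single application of a novel ``graph container'' lemma that exploits the $2$-uniform structure inherited from fixing a sum $c = a+b$, and reduces the containers $C_1, C_2$ from $Y$ down to sets of size at most $m \cdot \polylog(n)$ at a total branching cost of $n^{O_\eps(\sqrt{m})}$. Phase two then applies an asymmetric tripartite hypergraph container lemma $O_\eps(1)$ many times on each preprocessed triple, until the stopping condition in the statement is met. The key quantitative gain is that Phase one replaces what would otherwise have been $\Theta(\log n)$ rounds of hypergraph containers, thus saving the logarithmic factor in the final container count and yielding the bound $|\C| \le n^{2^{7}\eps^{-2}\sqrt{m}}$.

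For Phase one I set up an auxiliary bipartite graph on $Y \cup Y$ that encodes, for each candidate sum $c \in Y+Y$, the matching whose edges are the pairs $(a,b)$ with $a+b=c$. These link matchings possess exactly the degree regularity needed for a Kleitman--Winston / Balogh--Samotij-style graph container argument to deliver, with a single fingerprint of total size $O_\eps(\sqrt{m})$, a family of triples $(C_0,C_1,C_2)$ such that every pair $(A,B)$ of interest is contained in some triple and both $C_1, C_2$ have size at most $m \cdot \polylog(n)$. The lower bounds $|A|\ge\sqrt{m}$ and $|B|\ge (1+\eps^{-2})\sqrt{m}$ in the hypothesis are what enables the averaging step in the graph container argument to succeed without iteration.

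For Phase two I adapt the standard hypergraph container lemma to the tripartite $3$-uniform hypergraph $\HH(C_0, C_1, C_2)$, with fingerprints drawn jointly from $C_1$ and $C_2$. Each round either certifies that the current triple satisfies the stopping condition -- i.e.\ either $|C_1|,|C_2| \le m/4$ or $e(\HH(C_0,C_1,C_2)) < \eps^2 |C_1||C_2|$ -- in which case the triple is added to $\C$, or the relevant co-degree and edge-density conditions hold, allowing a branching step producing at most $n^{O_\eps(\sqrt{m})}$ smaller triples whose $|C_1| \cdot |C_2|$ has shrunk by a constant factor. Starting from $|C_1|,|C_2|\le m\cdot \polylog(n)$ after preprocessing, only $O_\eps(1)$ such rounds are needed, so the cumulative branching factor remains $n^{O_\eps(\sqrt{m})}$.

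The main obstacle is translating the sparsity of $\HH(C_0, C_1, C_2)$ into the size bound $|C_1|+|C_2| \le (1+2\eps)(m+\beta)$. Sparsity of the hypergraph says that for all but $\eps^2|C_1||C_2|$ pairs $(a,b)\in C_1\times C_2$ the sum $a+b$ lies outside $C_0^c$, i.e.\ in the set $(Y+Y)\setminus C_0$ of size at most $m$; this is a weak restricted-sumset condition on $(C_1, C_2)$. Converting it into a structural statement that forces $C_1\cup C_2$ into a near-coset of a subgroup of size $(1+4\eps)m$ requires a quantitative Freiman--Ruzsa / Pl\"unnecke--Ruzsa-type stability theorem valid in arbitrary abelian groups, carefully calibrated so that the subgroup size matches $\beta = \beta_G((1+4\eps)m)$ and the resulting bound on $|C_1|+|C_2|$ matches $(1+2\eps)(m+\beta)$. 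Whenever an intermediate triple from Phase two violates the target size bound, an additional round of the hypergraph container lemma is inserted to refine it further; managing this is where I expect the bulk of the technical effort to go.
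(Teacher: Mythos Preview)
Your two-phase architecture---a single graph-container step followed by $O_\eps(1)$ rounds of a tripartite hypergraph container lemma---is exactly the paper's approach. Two points need correction, however.

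First, Phase one must deliver containers of size $(1+\delta)m$, not $m\cdot\polylog(n)$. If the initial containers genuinely had size $m\cdot\polylog(n)$, then shrinking $|C_1|\cdot|C_2|$ by a constant factor per round would require $\Theta(\log\log n)$ rounds, not $O_\eps(1)$, and the final count would overshoot $n^{2^7\eps^{-2}\sqrt m}$. The paper's graph container lemma (Lemma~\ref{lemma:first-container}) achieves $(1+\delta)m$ in one shot: in the bipartite graph $H^F(U_0,U_1)$ every vertex of $U_1$ has degree exactly $|F|\ge\lambda$, and the Kleitman--Winston argument with fingerprint size $s$ and $\delta\lambda s\ge(1+\delta)m$ forces $|S|+|C_1|\le(1+\delta)m$ directly. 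The $\polylog$ slack you budget for is not needed and would in fact be fatal to the final count.

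Second, you misplace the main obstacle. Once the iteration halts with $e(\HH(C_0,C_1,C_2))<\eps^2|C_1||C_2|$, the bound $|C_1|+|C_2|\le(1+2\eps)(m+\beta)$ is an \emph{immediate} consequence of a known supersaturation lemma (Lemma~\ref{lemma:supersaturation}, from~\cite{campos2023typical}, itself a corollary of Kneser's theorem): if $|C_1|+|C_2|$ exceeded $(1+2\eps)(m+\beta)$ then there would be at least $\eps^2|C_1||C_2|$ pairs $(u,v)\in C_1\times C_2$ with $u+v$ outside any fixed $m$-set, contradicting sparsity relative to $W=(Y+Y)\setminus C_0$. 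No Freiman--Ruzsa structure theorem is invoked, and no ``additional rounds'' are inserted---the stopping condition already certifies property~(2). The real bookkeeping in Phase two is bounding the number of rounds: the paper tracks additive shrinkage of $D_0$ (at most $6\delta^{-1}$ steps, since $|(A+B)^c|\ge|Y+Y|-m$) and multiplicative shrinkage of $D_1$, then of $D_2$ after swapping roles, rather than the product $|C_1|\cdot|C_2|$ you propose.
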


	As usual in container-type theorems, our container theorem gives us a ``small'' collection $\C$ of containers such that every set in the family we want to count is inside some container, and the size of every container is significantly smaller than the size of the ground set.
	The main difference between our container theorem and the previous ones is the size of the collection.  
	While similar collections in~\cite[Theorem 4.2]{campos2020number} and~\cite[Theorem 2.2]{campos2023typical} have size $\exp\left(O\left(\eps^{-2}\sqrt{m}(\log n)^{3/2}\right)\right)$, our collection has size $\exp\left(O\left(\eps^{-2}\sqrt{m}\log n\right)\right)$.

    Next we briefly explain the idea behind the improvement. In Campos' proof~\cite{campos2020number}, he defined an auxiliary hypergraph so that every $A\subseteq{Y}$ with $\lvert{A+A}\rvert\leq{m}$ corresponds to a ``special'' independent set, and then he obtained a collection of containers for such independent sets by iteratively applying a hypergraph container lemma until the container size reaches $(1+2\varepsilon)(m+\beta)/2$. However, this process requires $\Omega(\varepsilon^{-2}\log{n})$ iterations in the worst case. 
    The main novelty in our approach is that we first use a bipartite graph container algorithm (see Lemma~\ref{lemma:first-container}) to obtain an initial collection of $n^{O(\sqrt{m})}$ containers of size $O(m)$, after which it is sufficient to apply our adapted hypergraph container algorithm (see Lemma~\ref{lemma:container-shrinking}) only $O(\varepsilon^{-2})$ times to reduce the container size. This results in a much smaller family of containers. The feature of our graph container algorithm is that we only traverse a certain part of vertices instead of the entire vertex set. This allows us to control the intersection size of every container and this certain part, which cannot be achieved by applying existing graph container lemmas.

    Our hypergraph container algorithm, which we shall apply in the second phase, is designed for a specific family of hypergraphs that encapsulates the asymmetric setup as well. It provides containers for all pairs $(A,B)$ of subsets satisfying $|A+B|\leq{m}$ and reduces the container size more efficiently than the corresponding lemma in~\cite{campos2023typical}. We note that a symmetric form of Theorem~\ref{thm:simple-containers} considering only the pairs $(A,A)$ would be sufficient to derive our symmetric counting and structural results (Theorems~\ref{thm:symmetric-refined},~\ref{thm:symmetric-typical-structure} and~\ref{thm:unrefined-counting}), and to prove this symmetric form one could also employ Campos' hypergraph container lemma~\cite[Theorem 2.2]{campos2020number} in the second phase of our approach. 
    We also note that if the hypergraph container lemma of Campos, Coulson, Serra, and Wötzel~\cite[Theorem 2.1]{campos2023typical} is applied in the second phase of our approach, then one could obtain a weaker form of Theorem~\ref{thm:simple-containers}, with a larger collection of containers and without the last additional property. 
    This weaker form would be sufficient to deduce our asymmetric counting results (Theorems~\ref{thm:asymmetric-counting}~and~\ref{thm:asymmetric-typical-structure}) with a worse error term.  
    For the asymmetric structural result (Theorem~\ref{thm:unrefined-counting-part-2}), we need the full strength of Theorem~\ref{thm:simple-containers}.
    That is, we not only need the collection of containers to be small, but we also need to know their internal structure revealed by the last additional property. The hypergraph container lemma in~\cite{campos2023typical} cannot achieve both goals at the same time, even with our graph container algorithm applied beforehand. In contrast, our algorithm in Lemma~\ref{lemma:container-shrinking} specifies which part of the container to shrink every time, and this proves to be crucial for reducing the container size and keeping track of its internal structure simultaneously in the second phase.

It is also worth noting that Theorem~\ref{thm:simple-containers} is essentially optimal in the sense that neither the container size nor the number of containers can be significantly improved. The upper bound on the container size cannot be further improved, because the construction in~\cite{campos2020number} points out that there exist sets $A,B\subseteq{G}$ of size $\frac{m+\beta}{2}$ such that $\left\lvert{A+B}\right\rvert\leq{m}$. Namely, the corresponding $(C_{0},C_{1},C_{2})$ for such $(A,B)$ must satisfy $|C_{1}|+|C_2|\geq{m + \beta}$. On the other hand, the number of containers cannot be improved to $n^{\sqrt{m}}$. Otherwise when $m\ll(\log{n})^{2}$, this would yield an upper bound $2^{\left(1+o(1)\right)\sqrt{m}\log{n}}$ on the number of $A\subseteq{Y}$ with $\left\lvert{A+A}\right\rvert\leq{m}$, which contradicts the trivial lower bound $\binom{n}{<\sqrt{2m}}$.

	The rest of this paper is divided as follows.
	In Section~\ref{sec:proof-idea} we give a brief introduction on the container method and prove Theorem~\ref{thm:simple-containers};
	in Section~\ref{sec:first-container}, we introduce our asymmetric graph container algorithm and prove Lemma~\ref{lemma:first-container}, which is an unrefined version of Theorem~\ref{thm:simple-containers};
	in Section~\ref{sec:second-container}, we introduce our hypergraph container algorithm and derive our so-called container shrinking lemma (see Lemma~\ref{lemma:container-shrinking}) from it; in Section~\ref{sec:counting}, we derive all our counting theorems from Theorem~\ref{thm:simple-containers}, namely Theorems~\ref{thm:symmetric-refined},~\ref{thm:unrefined-counting},~\ref{thm:asymmetric-counting} and~\ref{thm:unrefined-counting-part-2}; and finally in Section~\ref{sec:typical-structure}, we prove our typical structure theorems, namely Theorems~\ref{thm:symmetric-typical-structure} and~\ref{thm:asymmetric-typical-structure}.

    \section{Container theorems and proof idea} \label{sec:proof-idea}
	In this section we explain the idea behind the statement and the proof of Theorem~\ref{thm:simple-containers}, which uses the so-called container method. A container is a subset of vertices in a (hyper)graph, which potentially contains many independent sets.
	The history of container-type theorems traces back to the works of Kleitman and Winston~\cite{kleitman1982number} and Sapozhenko~\cite{sapozhenko2001independent,sapozhenko2003cameron,sapozhenko2005systems,sapozhenko2007number}, who developed a graph container algorithm to bound the number of independent sets in certain auxiliary graphs.
	Later, Balogh, Morris, and Samotij~\cite{balogh2015independent} and, independently, Saxton and Thomason~\cite{saxton2015hypergraph} 
	developed a general hypergraph container algorithm to count families without forbidden configurations.
    Balogh and Samotij~\cite{baloghsamotij} proved an efficient version which is suitable for hypergraphs with large uniformities.
	More recently, Morris, Samotij, and Saxton~\cite{morris2018asymmetric} introduced a refined version of the container method that takes into account the asymmetry of certain structures.
	This refinement turns out to be extremely useful in additive combinatorics, and a variant of it was recently used by Campos~\cite{campos2020number} to bound the number of sets with small sumset under certain conditions.
	The containers in~\cite{campos2020number} were also used in~\cite{campos2022typical,campos2023typical} to determine the typical structure of sets with small sumset.

	For a graph $H$ on vertex set $[n]$, let $\I_j(H)$ be the collection of independent sets of size $j$ in $H$.
	The rough idea behind graph containers is that if $j$ is ``large'' and $H$ is roughly regular, then the independent sets in $\I_j(H)$ must somehow be clustered together.
	That is, it is possible to obtain a ``small'' collection $\C \se 2^{[n]}$ of ``small'' sets such that every set $I$ in $\I_j(H)$ is contained in some set in $\C$.
	This allows us to go from the  trivial upper bound of $\binom{n}{j}$ on the size of $\I_j(H)$ to $\sum_{C \in \C} \binom{|C|}{j}$. This represents an exponential improvement if $|\C| = 2^{o(j)}$ and $|C| < (1-\delta)n$ for all $C \in \C$, for some constant $\delta >0$.

	The Kleitman--Winston container algorithm to obtain such a collection $\C$ is quite simple.
	Before implementing it, we fix an ordering of the vertices $v_1,\ldots,v_n$ in $V(H)$ for each graph $H$.
	As an input, the algorithm receives a graph $H$, a parameter $s$, and an independent set $I$ of size at least $s$.
	Initially, it sets $S = \emptyset$ (the so-called fingerprint of $I$) and
	$U = V(H)$ (the set of active vertices).
	While $|S| < s$, the algorithm selects a vertex $v \in U$ of maximum degree in $H[U]$ (breaking ties according to the pre-fixed order),
	and asks whether $v$ is in $I$. If it is, then we place $v$ in $S$ and remove $v$ and its neighbours from $U$. Otherwise, we remove $v$ from $U$.
	The algorithm stops after $|S| = s$ and outputs $S(I)=S$ and $U(I)=U$. 

It is immediate that $I \se U \cup S$ and one can check that if two independent sets $I$ and $I'$ output the same fingerprint $S$, then they also output the same $U$.
	Therefore, the collection $\C$ given by $\{ U(I) \cup S(I): I\subseteq V(H)\text{ an independent set and } |I| \ge s \}$ is a collection of at most $\binom{n}{s}$ containers.
	Now, assuming that $H$ is roughly $d$-regular, if we choose $s \approx 2\delta n/d$, then we would expect that every time we place a vertex in $S$, we remove roughly at least $d/2+1$ vertices from $U$.
	Therefore, we would expect that $|U|+|S| \le (1-\delta)n$ after the algorithm stops. Moreover, if $d\gg1$, namely, $s\ll n$, then the number of containers would be sufficiently small and hence $\C$ is the desired collection.

	Given this effective tool for counting independent sets in graphs, it is natural to consider reformulating our counting problem in terms of counting independent sets in some auxiliary graph. 
	Let $G$ be an arbitrary abelian group and $Y$ be an arbitrary set in $G$.
	For subsets $U_0 \se Y+Y$ and $F, U_1 \se Y$, we define $H^{F}(U_0,U_1)$ to be the bipartite graph with parts $U_0$ and $U_1$ in which two vertices $a \in U_1$ and $b \in U_0$ are connected by an edge if and only if $b-a \in F$.
	Then, one can see that for every $A \se Y$, the set $(A+F)^c \cup A$ is an independent set in $H^{F}(U_0,U_1)$, where $(A+F)^c \coloneqq U_0 \setminus (A+F)$.

    \subsection*{Our first container lemma}
	Our main improvement in this paper comes from the use of an asymmetric graph container algorithm, which we call \sumrise, to count independent sets in the graphs $H^{F}(U_0,U_1)$.
	This is one of the main differences between our work and previous work, which reformulated the counting problem as counting independent sets in 3-uniform hypergraphs instead of graphs.

	For $\lambda,s,m \in \mathbb{N}_{\ge 1}$, define
	\begin{align}\label{eq:defn-L-m}
		\LL_m(s,\lambda) \coloneqq \left \{ (A,F) \in 2^{Y} \times 2^{Y} 
		\hspace*{1.5mm}
	: \, |A| \ge s, |F| \ge \lambda \text{ and } |A+F| \le m \right \}.
	\end{align}
	Our first container lemma can be thought of as an unrefined version of Theorem~\ref{thm:simple-containers}.

\begin{lemma}\label{lemma:first-container}
	Let $m, n, \lambda, s \in \mathbb{N}_{\ge 1}$ and $\delta > 0$ be such that $\delta \lambda s \ge (1+\delta)m$.
	Let $Y$ be an $n$-element subset of an arbitrary abelian group $G$.
	Then, there exists a collection $\C \se 2^{Y}$ such that the following holds:
	\begin{enumerate}
		\item $|\C| \le \displaystyle \binom{n}{s} \binom{n}{\lambda}$;
        \vspace{6pt}
		\item $|C| \le (1+\delta)m$ for every $C \in \C$;
        \vspace{6pt}
		\item For every $(A,F)\in \LL_m(s,\lambda)$, there exists $C \in \C$ such that $A \se C$.
	\end{enumerate}
\end{lemma}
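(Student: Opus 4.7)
The plan is to use an asymmetric graph container procedure (the ``\sumrise'' algorithm) on the bipartite graph $H^{S_F}(Y+Y,Y)$, where $S_F$ is a canonical $\lambda$-subset of the (unknown) $F$. Fix a total ordering on $Y$ and let $S_F$ consist of the first $\lambda$ elements of $F$ in that ordering. The idea is to build a second fingerprint $S_A\subseteq A$ of size $s$ so that the collection $\C$ can be indexed by pairs $(S_A,S_F)\in\binom{Y}{s}\times\binom{Y}{\lambda}$, immediately yielding $|\C|\le\binom{n}{s}\binom{n}{\lambda}$.

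Concretely, I would initialize $S_A=\emptyset$, $U_0=Y+Y$, $U_1=Y$, and iterate until $|S_A|=s$ (which is attainable since $|A|\ge s$): pick an unprocessed $v\in U_1$ of maximum degree in $H^{S_F}[U_0,U_1]$, breaking ties by the ordering. If $v\in A$, add $v$ to $S_A$ and delete $(v+S_F)\cap U_0$ from $U_0$, while \emph{keeping} $v$ inside $U_1$; otherwise delete $v$ from $U_1$. Define the container as $C:=U_1^{\textup{end}}$. Then $A\subseteq C$ is immediate, since only non-$A$ vertices are ever deleted from $U_1$. The replayability of the procedure from $(S_A,S_F)$ alone (needed to certify that $C$ depends only on the fingerprints) follows by induction: replacing the oracle query ``$v\in A$?'' with the lookup ``$v\in S_A$?'' yields identical $(U_0,U_1)$-states at every step, so the same vertex is picked and the same decision is made at every iteration.

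The heart of the argument is the size bound $|C|\le(1+\delta)m$. Let $d_{i_1}\ge d_{i_2}\ge\ldots\ge d_{i_s}$ be the maximum degrees at the $s$ yes-iterations (monotone because $U_0$ and the unprocessed set both shrink), and let $R$ be the set of vertices deleted from $U_0$. The deletions at distinct yes-iterations are disjoint and all lie in $A+F$, so $|R|=\sum_j d_{i_j}\le|A+F|\le m$, giving $d_{i_s}\le m/s$ in particular. At termination, each $v\in S_A$ has degree $0$ in $H^{S_F}[U_0^{\textup{end}},U_1^{\textup{end}}]$ (its $\lambda$ neighbours $v+S_F$ were removed at $v$'s yes-iteration), whereas each $v\in U_1^{\textup{end}}\setminus S_A$ had degree at most $d_{i_s}\le m/s$ at the last yes-iteration and hence also in the final graph. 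Therefore
\[\sum_{v\in U_1^{\textup{end}}}\deg(v)\;\le\;\tfrac{m}{s}\bigl(|U_1^{\textup{end}}|-s\bigr).\]
On the other hand, expanding $\deg(v)=\lambda-|(v+S_F)\cap R|$ and swapping the order of summation gives
\[\sum_{v\in U_1^{\textup{end}}}\deg(v)=\lambda|U_1^{\textup{end}}|-\sum_{f\in S_F}|U_1^{\textup{end}}\cap(R-f)|\;\ge\;\lambda|U_1^{\textup{end}}|-\lambda m.\]
Combining the two bounds and using $\lambda-m/s\ge\lambda/(1+\delta)$ (an immediate consequence of $\delta\lambda s\ge(1+\delta)m$) delivers $|U_1^{\textup{end}}|\le(1+\delta)m$, as desired.

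I expect the subtle point to be this final degree-averaging step: distinguishing the contribution $0$ from fingerprinted vertices and the contribution $\le m/s$ from the rest is exactly what absorbs the ``$+s$'' and yields the sharp constant $(1+\delta)m$. This is why the algorithm must \emph{keep} the $S_A$-vertices inside $U_1$ rather than delete them; a naive variant that deletes each fingerprinted vertex and outputs $C=U_1^{\textup{end}}\cup S_A$ only reaches $|C|\le(1+\delta)m+s$, missing the lemma by an additive $s$. Once the bookkeeping of ``unprocessed'' vs.\ ``removed from $U_1$'' is set up correctly, the rest of the proof is routine.
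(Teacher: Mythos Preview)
Your proposal is correct and follows essentially the same route as the paper: an asymmetric Kleitman--Winston-style algorithm on $H^{S_F}(Y+Y,Y)$, with the container determined by a fingerprint pair $(S_A,S_F)\in\binom{Y}{s}\times\binom{Y}{\lambda}$ and the size bound coming from the same max-degree/removed-neighbourhood count (the paper phrases it as a contradiction, you phrase it as a direct two-sided estimate of $\sum_{v}\deg(v)$). One minor correction: your closing remark that the ``naive variant'' (delete fingerprinted vertices, output $U_1^{\mathrm{end}}\cup S_A$) only reaches $(1+\delta)m+s$ is misleading --- that variant is exactly what the paper runs, and via its contradiction argument it attains the sharp $(1+\delta)m$; your bookkeeping tweak of keeping $S_A$ inside $U_1$ is convenient for your direct argument but not essential.
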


	We prove Lemma~\ref{lemma:first-container} in Section~\ref{sec:first-container}.
        Let $V_0$ be a copy of $Y+Y$ and $V_1$ be a copy of $Y$. 
	The idea is to apply our~\sumrise~algorithm to $H^{F}(V_0,V_1)$ with input being any set $A$ for which $(A,F) \in \LL_m(s,\lambda)$.
	\sumrise~is simply an asymmetric version of the Kleitman--Winston container algorithm.
	Instead of setting the set of active vertices to be $U = V(H^F)$ at the beginning, we simply set $U = V_1$.
	Again, we stop after placing $s$ vertices in $S$, which are entirely contained in $V_1$.
	It is not hard to check that $A \se U \cup S$ (see Claim~\ref{claim:basic-property-sumrise}), and thus we set $C = U \cup S$ to be the container for $A$, with $U$ being the set of active vertices at the end of the algorithm.

	\subsection*{The container shrinking lemma}
    Let $\C_1$ be the collection of containers given by
    Lemma~\ref{lemma:first-container} with $s$ and $\lambda$ and let $\C_2$ be the collection given by Lemma~\ref{lemma:first-container} by swapping $s$ and $\lambda$.
    If we set $s = \lceil \sqrt{m}\, \rceil$,  $\lambda = \lceil (1+\delta^{-1})\sqrt{m}\, \rceil$ and take
	\begin{align}\label{eq:first-container-C}
		\C' = \left \{ (C_1,C_2): C_1 \in \C_1 \text{ and } C_2 \in \C_2 \right \},
	\end{align}
	we almost obtain Theorem~\ref{thm:simple-containers}, with $\delta = \eps^2$.      
	However, instead of having property (2), we have $|C_1| + |C_2| \le 2(1+\delta)m$ for every $(C_1,C_2) \in \C'$.
	The reduction on $|C_1| + |C_2|$ from roughly $2m$ to $m+\beta$ is done by iteratively applying our so-called container shrinking lemma (see Lemma~\ref{lemma:container-shrinking} below).

	Before we state our container shrinking lemma, let us introduce some notation. For $m,s,\lambda \in \mathbb{N}_{\ge 1}$, $U_0 \se Y+Y$ and $U_1,U_2 \se Y$, we define
	\begin{align}\label{eq:family-F-U}
		\I_{m}(s,\lambda,U_{0},U_{1},U_{2}) \coloneqq 
		\left\{
			(A,B): 
			\begin{aligned}
				& A \se U_{1}
				, B \se U_{2} \text{ and } (A+B)^c \subseteq U_{0}, \\
				& |A+B| \le m \text{ and } |A|\ge s, |B| \ge \lambda.
			\end{aligned}
		\right\},
	\end{align}
	where $(A+B)^c \coloneqq (Y+Y) \setminus (A+B)$.
	When $U_{0} = Y+Y$ and $U_{1} = U_{2} = Y$, or when the sets are clear from the context, we simply omit them from the notation and write $\I_{m}(s,\lambda)$. 
	With this notation, Theorem~\ref{thm:simple-containers} finds a collection of containers for pairs in $\I_{m}(s,\lambda)$, where $s = \lceil \sqrt{m}\, \rceil$ and $\lambda = \lceil (1+\delta^{-1})\sqrt{m}\, \rceil$.

	For sets $U_0 \se Y+Y$ and $U_1, U_2 \se Y$, we define $\HH(U_0,U_1,U_2)$ to be the tripartite 3-uniform hypergraph with parts $U_0, U_1$ and $U_2$, in which vertices $a \in U_1$, $b \in U_2$ and $c \in U_0$ are connected if and only if $c = a+b$.

    To simplify our notation, we use $(A_1,A_2,A_3) \se (B_1,B_2,B_3)$ to indicate that $A_i \se B_i$ for all $i \in [3]$.
\begin{lemma}\label{lemma:container-shrinking}
	Let $\delta >0$ and $m, s, \lambda \in \mathbb{N}_{\ge 1}$ be such that 
	$\lambda s \ge m $.
	Let $U_0 \se Y+Y$ and $U_{1},U_{2} \se Y$, with $|U_1| \ge m/4$.
	There exists a collection $\D$ of size at most $\left ( \binom{n}{s} +1 \right )\binom{n}{\lambda}$ such that the following holds.
	For every pair $(A,B) \in \I_{m}(s,\lambda,U_{0},U_{1},U_{2})$ there exists $(D_0,D_1,D_2) \in \D$ such that
	\[((A+B)^c,A,B) \se (D_0,D_1,D_2) \se (U_0,U_1,U_2).\]
	Moreover, at least one of the following properties are satisfied:
	\begin{enumerate}
		\item The hypergraph $\HH(D_0,D_{1},D_2)$ has strictly less than $\delta|D_{1}||D_2|$ edges;
		\vspace{1mm}
        \item $|D_0| \le |U_{0}|-\frac{\delta m}{6}$ or $|D_{1}| \le \left(1-\frac{\delta}{6}\right)|U_{1}|.$
	\end{enumerate}
\end{lemma}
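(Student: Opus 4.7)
My plan is to prove Lemma~\ref{lemma:container-shrinking} via a hypergraph container algorithm acting on the tripartite 3-uniform hypergraph $\HH(U_0,U_1,U_2)$ and tailored to the asymmetric structure of the input pair $(A,B) \in \I_m(s,\lambda,U_0,U_1,U_2)$. Upon input of $(A,B)$, the algorithm outputs an ordered pair of fingerprints $(S,T)$ with $S \se A$, $T \se B$, $|S| \le s$ and $|T| = \lambda$, from which the triple $(D_0,D_1,D_2)$ is reconstructed; the collection $\D$ is then defined as the set of all achievable outputs.

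The algorithm proceeds in two phases. Fix any canonical total order on $Y \cup (Y+Y)$. Initialise $D_0 = U_0$, $D_1 = U_1$, $D_2 = U_2$, $S = T = \emptyset$. In Phase~I, while $|T| < \lambda$, I would select the vertex $u \in D_2$ of largest codegree in the current $\HH(D_0,D_1,D_2)$, breaking ties via the fixed order, and query whether $u \in B$. On a ``yes'' answer, append $u$ to $T$ and prune from $D_0$ all sums $u+a$ that have become determined; on a ``no'' answer, simply remove $u$ from $D_2$. At the end of Phase~I, test whether $e(\HH(D_0,D_1,D_2)) < \delta|D_1||D_2|$; if so, halt and output the current container with partial data $(\emptyset,T)$, which accounts for at most $\binom{n}{\lambda}$ such containers. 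Otherwise, enter Phase~II: iterate the analogous procedure on $D_1$ and $A$ until $|S| = s$, yielding at most $\binom{n}{s}\binom{n}{\lambda}$ containers indexed by $(S,T)$. The total count is therefore $\bigl(\binom{n}{s}+1\bigr)\binom{n}{\lambda}$.

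The key remaining claim is that every output after a full Phase~II run satisfies property (2). I would argue this by a double-counting argument: each confirmed $v \in S$ has codegree at least $\delta|D_2|$ at the moment it was queried, so either $|D_0|$ drops by at least $\delta|D_2|/2$ via the forced pruning, or the $b \in D_2$ witnessing this codegree that lie in $B$ force compensating pruning on $D_1$. Summing over the $s$ successful queries and combining with the contribution of Phase~I yields either a cumulative reduction of order $\delta m$ on $|D_0|$ or a constant-fraction reduction of $|U_1|$. The hypotheses $\lambda s \ge m$ and $|U_1| \ge m/4$ are exactly what is needed to rule out degenerate intermediate regimes and to meet the thresholds $\delta m / 6$ and $(\delta / 6)|U_1|$. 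Injectivity of the map from the input to its fingerprint sequence is standard: two inputs producing the same ordered fingerprint induce identical algorithmic choices throughout, hence the same container, so the bound on $|\D|$ follows.

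The main obstacle will be the quantitative counting step above, in which one has to carefully design the pruning rule so that every ``yes'' answer contributes a quantifiable decrement to $|U_0 \setminus D_0|$ or $|U_1 \setminus D_1|$, and then assemble these decrements across both phases to reach the specified thresholds $\delta m/6$ and $(\delta/6)|U_1|$ exactly under the given hypotheses.
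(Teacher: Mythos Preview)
Your two-phase architecture and the bookkeeping for $|\D|$ are essentially the same as in the paper, but the heart of the argument --- establishing property~(2) after a full Phase~II run --- has a real gap.

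In Phase~II you propose to select $v\in D_1$ by maximum \emph{hypergraph} codegree in $\HH(D_0,D_1,D_2)$ and then argue that ``each confirmed $v\in S$ has codegree at least $\delta|D_2|$'', using this to force a decrement in $|D_0|$ or $|D_1|$. But the hypergraph codegree of $v$ counts edges $\{c,v,b\}$ with $b\in D_2$, whereas upon confirming $v\in A$ you are only entitled to delete from $D_0$ the elements $v+b$ with $b$ already \emph{known} to lie in $B$, i.e.\ $b\in T$; that is at most $\lambda$ deletions regardless of the codegree. Your fallback (``the $b\in D_2$ witnessing this codegree that lie in $B$ force compensating pruning on $D_1$'') is not a valid mechanism: elements of $B$ never cause removals from $D_1$. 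So the double-counting you sketch does not connect the codegree to any actual shrinkage, and the hypotheses $\lambda s\ge m$, $|U_1|\ge m/4$ never enter. (Relatedly, your Phase~I instruction to ``prune from $D_0$ all sums $u+a$ that have become determined'' is either vacuous or ill-defined, since $A$ is not yet known.)

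The paper sidesteps this by running Phase~II not on the hypergraph but on the \emph{bipartite graph} $H^{T}(U_0,U_1)$ determined by the fingerprint $T\subseteq B$: one selects $v\in D_1$ of maximum degree in $H^{T}(D_0,D_1)$, and on confirming $v\in A$ deletes $N(v)=v+T\subseteq A+B$ from $D_0$. The point is that now the amount removed from $D_0$ is exactly the degree used for selection. The missing ingredient is a short claim (Claim~4.3 in the paper) showing that if the hypergraph $\HH(U_0,U_1,C_2)$ is dense then so is $H^{T}(U_0,U_1)$: since each $u\in T$ was a max-degree vertex at its selection time, every $u\in T$ contributes at least $\delta|U_1|$ edges to $H^{T}$, whence $e(H^{T})\ge\delta|T||U_1|$. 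A separate graph-container lemma (Lemma~3.4) then turns this density, together with $\lambda s\ge m$ and $|U_1|\ge m/4$, into the alternative $|D_0|\le|U_0|-\delta m/6$ or $|D_1|\le(1-\delta/6)|U_1|$ via an average-degree argument. You should also check both the hypergraph and the graph density at the branch point: in the ``sparse'' branch you need $e(\HH(D_0,D_1,D_2))=e(\HH(U_0,U_1,C_2))+e(H^{T}(U_0,U_1))<\delta|D_1||D_2|$, and the second summand is not controlled by your single hypergraph test.
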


The main difference between this lemma and a similar result in~\cite{campos2023typical} (see their Theorem 2.1) is that we can obtain a new collection of containers in which $U_0$ or $U_1$ shrinks, while that in their result they could only say that one of the sets $U_0$, $U_1$ or $U_2$ shrinks.
This restriction allows us to iterate Lemma~\ref{lemma:container-shrinking} at most $O(\delta^{-1})$ times to obtain a collection of containers as in Theorem~\ref{thm:simple-containers} (we first shrink $U_1$ as much as we can and later switch the roles of $U_1$ and $U_2$ to shrink $U_2$).
This is part of the reason that allows us to save a log factor (together with having containers of size $\Theta(m)$ in the first step).

	\subsection*{Proof of Theorem~\ref{thm:simple-containers}, assuming Lemmas~\ref{lemma:first-container} and~\ref{lemma:container-shrinking}}\label{sec:main-container}
    We shall need the following supersaturation lemma of Campos, Coulson, Serra, and W\"{o}tzel~\cite{campos2023typical}, which was adapted from Corollary 3.3 in~\cite{campos2020number} for the asymmetric case.
	Roughly speaking, their supersaturation lemma says that if $D_{1},D_{2},W\subseteq{G}$ are finite sets such that $|D_1|+|D_2|$ is large enough compared to $|W|$, then there exist many pairs $(u,v)\in{D_1}\times{D_2}$ summing out of $W$.

\begin{lemma}[~\cite{campos2023typical}, Corollary 3.2]
\label{lemma:supersaturation}
Let $0<\eps <1/4$ and let  $D_{1},D_{2} \se Y$ and $W \se Y+Y$ be sets such that
\begin{align}\label{eq:supersaturation}
	|D_1|+|D_2| \ge (1+2\eps)(|W|+\beta),
\end{align}
where $\beta\coloneqq\beta_{G}\left((1+4\eps)\lvert{W}\rvert\right)$.
Then, there exists at least 
$\eps^2 |D_1||D_2|$ pairs $(u,v) \in D_1 \times D_2$ such that $u+v \in (Y+Y)\setminus W$.
\end{lemma}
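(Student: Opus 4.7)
I would prove the statement by contraposition: assuming strictly fewer than $\eps^{2}|D_{1}||D_{2}|$ pairs $(u,v)\in D_{1}\times D_{2}$ satisfy $u+v\notin W$, I aim to deduce $|D_{1}|+|D_{2}|<(1+2\eps)(|W|+\beta)$. The principal additive input is Kneser's theorem: for finite $A,B\subseteq G$, $|A+B|\ge|A|+|B|-|H|$, where $H$ is the stabiliser of $A+B$ in $G$. Because $A+B$ is a union of $H$-cosets, $|H|$ divides $|A+B|$ and hence $|H|\le|A+B|$; this is the fact used to toggle between the two regimes governed by the threshold $(1+4\eps)|W|$ appearing in the definition of $\beta$.

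My plan is to extract large subsets $A\subseteq D_{1}$ and $B\subseteq D_{2}$ with $A+B\subseteq W$ and $|A|+|B|\ge(1-O(\eps))(|D_{1}|+|D_{2}|)$, then apply Kneser to $(A,B)$. For the extraction, I would first apply Markov's inequality: the set $X$ of $u\in D_{1}$ having more than $\eps|D_{2}|$ bad partners in $D_{2}$ satisfies $|X|<\eps|D_{1}|$, so $D_{1}\setminus X$ is large and ``generically good''; a symmetric step inside $D_{2}$ produces initial subsets $(\tilde A,\tilde B)$ with only an $O(\eps)$-fraction of surviving bad pairs, and a further refinement prunes them to $(A,B)$ with $A+B\subseteq W$ while retaining the promised $(1-O(\eps))$-fraction of the combined size. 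Once this is in hand, Kneser applied to $(A,B)$ yields $|A+B|\ge|A|+|B|-|H|$, where $H$ is the stabiliser of $A+B$; since $A+B\subseteq W$ we have $|H|\le|A+B|\le|W|\le(1+4\eps)|W|$, so the definition of $\beta$ forces $|H|\le\beta$, and therefore $|A|+|B|\le|A+B|+|H|\le|W|+\beta$. Combining this with $|A|+|B|\ge(1-O(\eps))(|D_{1}|+|D_{2}|)$ and tracking constants carefully gives $|D_{1}|+|D_{2}|<(1+2\eps)(|W|+\beta)$, as desired.

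The main obstacle is precisely the extraction step. Purely graph-theoretic vertex deletion in the bipartite ``bad-pair'' graph is insufficient: its minimum vertex cover (by K\"onig's theorem equal to its maximum matching) can be as large as the total number of bad edges $\eps^{2}|D_{1}||D_{2}|$, which is generally far larger than the budget $\eps(|D_{1}|+|D_{2}|)$ one can afford to delete while still comparing to $(1+2\eps)(|W|+\beta)$. The extraction must therefore leverage additive structure on top of the raw count---for instance, partitioning $D_{1}+D_{2}$ along cosets of the stabiliser of a maximal sub-sumset contained in $W$ and running a greedy extension inside each coset, or invoking a stability version of Kneser that tolerates an $O(\eps)$-fraction of spillover sums outside $W$ while still bounding $|A|+|B|$ by $|W|+\beta$ up to an $(1+O(\eps))$ factor. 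This is the technical heart of the lemma and the step in which the exponent $2$ in the target bound $\eps^{2}|D_{1}||D_{2}|$ is used in an essential way, since each layer of Markov-type filtering costs one factor of $\eps$ in the size lower bound for $A,B$.
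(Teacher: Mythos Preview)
The paper does not prove this lemma at all: it is quoted verbatim as Corollary~3.2 of~\cite{campos2023typical} (itself an asymmetric adaptation of Corollary~3.3 in~\cite{campos2020number}), so there is no in-paper argument to compare your proposal against.

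That said, your proposal has a genuine gap beyond the one you already flag. You are right that Kneser's theorem is the essential additive input, and your remark that the stabiliser $H$ of any sumset $A+B\subseteq W$ satisfies $|H|\le|A+B|\le|W|\le(1+4\eps)|W|$, hence $|H|\le\beta$, is exactly why $\beta_G\big((1+4\eps)|W|\big)$ is the relevant parameter. But the specific extraction target you set---subsets $A\subseteq D_1$, $B\subseteq D_2$ with $A+B\subseteq W$ and $|A|+|B|\ge(1-O(\eps))(|D_1|+|D_2|)$---is in general \emph{false} under the contrapositive hypothesis, so no pruning scheme can produce it. Concretely, take $G=\mathbb{Z}_p$ with $p>\eps^{-2}$ prime, $D_1=D_2=G$, and $W=G\setminus\{0\}$: there are exactly $p<\eps^2|D_1||D_2|$ bad pairs, yet by Cauchy--Davenport any $A,B\subseteq G$ with $|A|+|B|>p$ have $A+B=G\not\subseteq W$, so no pair with $|A|+|B|\ge(1-O(\eps))\cdot 2p$ can satisfy $A+B\subseteq W$. (The lemma itself still holds here, because $\beta=p$ makes $(1+2\eps)(|W|+\beta)>2p$.) Your fallback suggestion of a ``stability Kneser'' tolerating an $O(\eps)$-fraction of spillover outside $W$ points in a workable direction, but that is not an off-the-shelf statement; proving it is essentially the content of the lemma. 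The proofs in~\cite{campos2020number,campos2023typical} do not pass through a single extracted pair; they are organised around an iterated application of Kneser (equivalently, a Pollard-type count of popular sums with a subgroup correction), and the exponent~$2$ on $\eps$ emerges as the product of two separate $\eps$-losses across that iteration rather than from two rounds of Markov filtering on a fixed bipartite graph.
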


Let $0<\delta <1/16$ and set
\[s = \lceil \sqrt{m}\, \rceil \qquad \text{and} \qquad \lambda = \left \lceil (1+\delta^{-1})\sqrt{m} \, \right \rceil.\]
Recall from~\eqref{eq:defn-L-m} that   
\[\LL_m(s,\lambda)=\{(A,B) \in 2^{Y} \times 2^{Y}: |A| \ge s, |B| \ge \lambda \text{ and } |A+B| \le m \}.\]
Now, fix a pair $(A,B) \in \LL_m(s,\lambda)$.
We create a sequence of containers $(D_{0,i},D_{1,i},D_{2,i})_{i\ge 0}$ for $((A+B)^c,A,B)$ as follows.
Let $\D_0$ be the collection of triples $(D_0,D_1,D_2)$ such that $D_0 = Y+Y$ and $(D_1,D_2)\in\C_1\times\C_2$, where $\C_{1}$ is the collection of containers given by Lemma~\ref{lemma:first-container} with $s$ and $\lambda$ and $\C_{2}$ is the collection of containers obtained from Lemma~\ref{lemma:first-container} by swapping $s$ and $\lambda$.
Then, there exists $(D_{0,0},D_{1,0},D_{2,0}) \in \D_0$ such that
\[((A+B)^c,A,B) \se (D_{0,0},D_{1,0},D_{2,0}).\]
Again by Lemma~\ref{lemma:first-container}, we have that the sizes of the sets $D_{1,0}$ and $D_{2,0}$ are at most $(1+\delta)m$.

For the next steps, we shall use Lemma~\ref{lemma:container-shrinking} repeatedly with $s' = \lambda' = \sqrt{m}$.
Our procedure can be formally described as follows.
At the $i$th step, if the process has not yet ended, we will have
constructed a sequence of containers $(D_{0,k},D_{1,k},D_{2,k})_{k=0}^{i-1}$.
For simplicity, we set $\HH_{k}\coloneqq\HH(D_{0,k},D_{1,k}, D_{2,k})$
for every $k \in \{0,\ldots,i-1\}$
and do the following:

\vspace{1mm}
\begin{enumerate}
	\item [S1.] 
	
	\begin{enumerate}
	\item If the first container is large enough, that is, $|D_{1,i-1}| > m/4$,
	then apply Lemma~\ref{lemma:container-shrinking} (with parameters $\delta$ and $s' = \lambda' = \sqrt{m}$) to the pair $(A,B)$ and
	$$U_{0} = D_{0,i-1}, \quad U_{1}
	= D_{1,i-1} \quad \text{ and } \quad U_{2} = D_{2,i-1}$$ to build the next triple of containers.
	By Lemma~\ref{lemma:container-shrinking}, there exists a collection $\D_i$ of size at most $\left(\binom{n}{s'}+1\right)\binom{n}{\lambda'}\le n^{2\sqrt{m}}$ and a triple $(D_{0,i},D_{1,i},D_{2,i}) \in \D_i$ such that 
	$$\hspace*{25mm} ((A+B)^c,A,B) \se (D_{0,i},D_{1,i},D_{2,i}) \se (D_{0,i-1},D_{1,i-1},D_{2,i-1}).$$
	Moreover, we have 
	\[\hspace*{2cm}
	e(\HH_{i}) < \delta |D_{1,i}| |D_{2,i}|
	\quad
	\text{or}
	\quad
	|D_{0,i}| \le |D_{0,i-1}|-\frac{\delta m}{6}
	\quad
	\text{or}
	\quad
	|D_{1,i}| \le \left(1-\frac{\delta}{6}\right)|D_{1,i-1}|.
	\]
	
	\item Set $i \leftarrow i+1$ and repeat the process until 
	$$e(\HH_{i}) < \delta |D_{1,i}| |D_{2,i}|\qquad
	\text{or}
	\qquad 
	|D_{1,i}| \le m/4.$$
	Then, go to stage S2.
\end{enumerate}

\vspace{1mm}
	\item[S2.] If the second container is large enough, that is, $|D_{2,i-1}| > m/4,$ 
	then apply S1.(a)--(c) but swapping the roles of $A$ and $B$ and the roles of $D_{1,i-1}$ and $D_{2,i-1}$.
	Moreover, in order to maintain the symmetry in the construction of our container sequence, we denote by $(D_{0,i},D_{1,i},D_{2,i})$ the triple of containers generated by each step of the process in this case.
	Set $i \leftarrow i+1$ and repeat the process until 
		$$e(\HH_i) < \delta |D_{1,i}| |D_{2,i}|\qquad
		\text{or}
		\qquad 
		|D_{2,i}| \le m/4.$$
		Then, go to stage S3.
	
	\vspace{1mm}
	\item[S3.] If the hypergraph in the previous iteration has few edges \emph{or} the first and second containers are small enough, that is,
	$$e(\HH_{i-1}) < \delta |D_{1,i-1}| |D_{2,i-1}| \qquad \text{or} \qquad \max\{|D_{1,i-1}|,|D_{2,i-1}|\} \le m/4,$$ 
	then set 
	$k^{\ast} = i-1.$
	The process terminates and its output is set to be the sequence $(D_{0,i},D_{1,i},D_{2,i})_{i=0}^{k^{\ast}}$.
\end{enumerate}

		\begin{claim}\label{claim:container-sequence}
			For every $(A,B) \in \LL_m(s,\lambda)$ the process above terminates in at most $k^{\ast} \le 2^{5}\delta^{-1}$ steps.
		\end{claim}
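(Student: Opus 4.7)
The plan is to classify each non-terminating iteration of the process into shrinkage regimes whose frequencies can be separately controlled, and then sum the contributions. First I would verify that the process is well-defined at every iteration: inductively, the triple inclusion $((A+B)^c,A,B)\subseteq(D_{0,i-1},D_{1,i-1},D_{2,i-1})$ persists, and combined with $|A|\ge s\ge\sqrt{m}$, $|B|\ge\lambda\ge\sqrt{m}$, $|A+B|\le m$, the loop guard $|D_{1,i-1}|>m/4$ in S1 (resp.\ $|D_{2,i-1}|>m/4$ in S2), and the parameter choice $s'\lambda'=m$, all hypotheses of Lemma~\ref{lemma:container-shrinking} are satisfied.

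At a non-terminating iteration, Lemma~\ref{lemma:container-shrinking} forces one of three regimes: either (I) the associated hypergraph has few edges (terminating the current stage), (R1) $|D_{0,i}|\le|D_{0,i-1}|-\delta m/6$, or (R2) the ``active'' container shrinks multiplicatively, namely $|D_{1,i}|\le(1-\delta/6)|D_{1,i-1}|$ in S1 and $|D_{2,i}|\le(1-\delta/6)|D_{2,i-1}|$ in S2. Aside from at most one purely-(I) iteration per stage, every iteration lies in (R1) or (R2). The crucial point is that (R1) is bounded \emph{jointly} across the two stages: since $(A+B)^c\subseteq D_{0,i}$ for all $i$ and $D_{0,0}=Y+Y$,
\[|D_{0,0}|-|D_{0,k^{\ast}}|\;\le\;|Y+Y|-|(A+B)^c|\;=\;|A+B|\;\le\;m,\]
so (R1) occurs at most $\lceil 6/\delta\rceil\le 7/\delta$ times across S1 and S2 combined.

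Regime (R2) is then bounded separately in each stage. By Lemma~\ref{lemma:first-container} both $|D_{1,0}|$ and $|D_{2,0}|$ are at most $(1+\delta)m$; $D_2$ is unchanged during S1 and $D_1$ is unchanged during S2. The inner loop in S1 exits once $|D_{1,i}|\le m/4$, so the number of (R2) iterations in S1 is at most the smallest $k$ with $(1-\delta/6)^{k}(1+\delta)m\le m/4$. Using $\ln(1/(1-x))\ge x$ and $\ln(4(1+\delta))\le\ln(17/4)<3/2$ for $\delta<1/16$, this gives at most $9/\delta$ such iterations, and the same estimate applies in S2. Counting the at most two purely-(I) terminating iterations, we conclude
\[k^{\ast}\;\le\;\tfrac{7}{\delta}+\tfrac{9}{\delta}+\tfrac{9}{\delta}+2\;\le\;\tfrac{27}{\delta}\;\le\;\tfrac{2^5}{\delta},\]
where $2\le 2/\delta$ since $\delta<1$.

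I expect the main subtlety to be recognising that regime (R1) must be bounded \emph{jointly} across both stages by $|A+B|\le m$ rather than per-stage --- this is precisely what keeps the shrinking phase at $O(\delta^{-1})$ iterations independent of $n$, and underlies the $\log n$-factor improvement of Theorem~\ref{thm:simple-containers} over the corresponding container theorems in~\cite{campos2020number,campos2023typical}.
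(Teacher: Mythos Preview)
Your proof is correct and follows essentially the same approach as the paper: classify each iteration by which of the three alternatives from Lemma~\ref{lemma:container-shrinking} holds, bound the $D_0$-shrinkage regime jointly across both stages using $|A+B|\le m$, and bound the multiplicative $D_j$-shrinkage separately in each stage. Your arithmetic is slightly different (you use $|D_{j,0}|\le(1+\delta)m$ rather than the paper's cruder $\le 2m$, and you explicitly account for the at most two terminal few-edges iterations), but the structure and final bound match.

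One small inaccuracy worth flagging: the assertion ``$D_2$ is unchanged during S1 and $D_1$ is unchanged during S2'' is not true in general. The container $D_2$ produced by Lemma~\ref{lemma:container-shrinking} is $C_2\cup F$, and the \sunset\ algorithm does discard from $U_2$ those vertices it traverses that lie outside $B$; so $D_{2,i}$ can strictly shrink during S1. This does not affect your argument, however, since all you actually need is the monotonicity $|D_{2,\text{start of S2}}|\le|D_{2,0}|\le(1+\delta)m$, which follows from the nesting $(D_{0,i},D_{1,i},D_{2,i})\subseteq(D_{0,i-1},D_{1,i-1},D_{2,i-1})$ you already recorded.
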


\begin{proof}[Proof of Claim~\ref{claim:container-sequence}]
	Let $j \in \{1,2\}$, $i\ge1$ and suppose that in its $i$th step the process is in stage S$j$.
	Then,
	we have $e(\HH_i) < \delta |D_{1,i}| |D_{2,i}|$ (and in this case the process finishes at time $i$),
	or at least one of the sets $D_{0,i-1}$ and $D_{j,i-1}$ has been shrunk:
	\[
		\hspace*{2cm}
		|D_{0,i}| \le |D_{0,i-1}|-\frac{\delta m}{6}
		\quad
		\text{or}
		\quad
		|D_{j,i}| \le \left(1-\frac{\delta}{6}\right)|D_{j,i-1}|.
	\]
	As $(A+B)^c \se D_{0,i} \se Y+Y$ and $|(A+B)^c| \ge |Y+Y|-m$, it follows that we cannot have more than $6\delta^{-1}$ values of $i$ for which $D_{0,i}$ is shrunk in stage S$j$.
	Similarly, since $|D_{j,0}|\le 2m$ and stage S$j$ lasts until $|D_{j,i}| \le m/4$,
	it follows that $D_{j,i}$ is shrunk at most $6\delta^{-1}\ln 8$ times. 
	Therefore, the process terminates in at most $k^{\ast} \le 6\delta^{-1}(1+2\ln 8) \le 2^5 \delta^{-1}$ steps.
\end{proof}

Define $\mathcal{C}$ as the collection of all triples $(D_{0,k^{\ast}},D_{1,k^{\ast}},D_{2,k^{\ast}})$ that are the final elements of sequences generated by the aforementioned procedure.
By Claim~\ref{claim:container-sequence}, the length of each sequence $(D_{0,i}, D_{1,i}, D_{2,i})_i$ produced by this process is at most $2^5\delta^{-1}$. Furthermore, since $\D_0$ has size at most $\binom{n}{s}^2\binom{n}{\lambda}^2$ and since each triple $(D_{0,i}, D_{1,i}, D_{2,i})$ is derived from a collection $\mathcal{D}_i$ of size at most $n^{2\sqrt{m}}$ for all $i \ge 1$, it follows that 
$$|\mathcal{C}| \le \binom{n}{s}^2\binom{n}{\lambda}^{2}n^{2\sqrt{m}\cdot2^{5}\delta^{-1}} \le n^{4\sqrt{m}+(2^{6}+2)\delta^{-1}\sqrt{m}} \le n^{2^7\delta^{-1}\sqrt{m}}.$$

By construction, we have that for every $(A,B) \in \LL_m(s,\lambda)$ there exists $(C_0,C_{1},C_2) \in \C$ such that $((A+B)^c,A,B) \se (C_0,C_{1},C_2)$. 
Moreover, by the termination rule of the process, we have
$$ |C_{1}|,|C_2| \le \dfrac{m}{4} 
\qquad
\text{or}
\qquad
e(\HH) < \delta|C_{1}||C_2|,$$
where $\HH = \HH(C_0,C_{1},C_2)$.
By Lemma~\ref{lemma:supersaturation},
if $e(\HH) < \delta |C_{1}| |C_{2}|$, then 
\[|C_{1}|+|C_{2}| \le (1+2\sqrt{\delta})(m+\beta),
\]
where $\beta = \beta_G\big((1+4\sqrt{\delta})m\big)$.
We complete our proof by setting $\delta = \eps^2$.\qed

\section{sumrise: the first container algorithm}\label{sec:first-container}
In this section we introduce our first container algorithm, which we call~\sumrise.
It is used to derive Lemma~\ref{lemma:first-container} and also an auxiliary lemma which shall be used in Section~\ref{sec:second-container} to prove Lemma~\ref{lemma:container-shrinking}.
Throughout this section, 
we let $n$ be a positive integer, and we fix an abelian group $G$, an $n$-element subset $Y$ of $G$ and subsets $U_0 \se V_0 \coloneqq Y+Y$ and $U_1 \se V_1 \coloneqq Y$.

For $s, m, \lambda \in \mathbb{N}_{\ge 1}$, define
\[\K_m(s,\lambda) \coloneqq \left \{ (A,F) \in \LL_m(s,\lambda) : 
	A \se U_1 \text{ and } (A+F)^c \se U_0\right \}.\]
For simplicity, we omit $U_0$ and $U_1$ from our notation.
Note that when $U_0 = V_0$ and $U_1 = V_1$ we have $\K_m(s,\lambda) = \LL_m(s,\lambda)$.

The algorithm~\sumrise~shall be applied to pairs belonging to the set $\K_m(s,\lambda)$. Initially, it may seem peculiar why we are interested in collections of sets other than $\mathcal{L}_m(s,\lambda)$. The benefit of expressing everything in terms of general sets like $\K_m(s,\lambda)$ will become evident in Section~\ref{sec:second-container}, where we introduce our second container algorithm and leverage it to prove Lemma~\ref{lemma:container-shrinking}.

As an input,~\sumrise~receives sets $A,F \se Y$ and a natural number $s$.
As an output,~\sumrise~returns subsets $C_0 \se U_0$ and $S, C_1 \se U_1$ such that $S \se A \se S \cup C_1$ and $(A+F)^c \se C_0$.
We call $S$ the \emph{fingerprint} of $A$ and we call $(C_0,S \cup C_1)$ the pair of \emph{containers} for $(A+F)^c$ and $A$, respectively.
Three sequences are employed to construct these sets: $(S_i)_{i=0}^{\tau}$, $(U_{0,i})_{i=0}^{\tau}$ and $(U_{1,i})_{i=0}^{\tau}$, where $\tau$ denotes the number of iterations of the while loop of~\sumrise. 
Their first elements are set to be $S_0=\emptyset$, $U_{0,0} = U_{0}$ and $U_{1,0} = U_{1}$ (see line~\ref{sumrise:initialize}). 
To construct the next elements, \sumrise~makes use of an auxiliary function $\sigma: \{H \se H^F(V_0,V_1): V(H)\cap V_1 \neq \emptyset\} \to V_1$ with the following property. For any $H$ in the domain of $\sigma$, $\sigma(H)$ is a vertex of maximum degree in $H$ among the vertices in $V(H)\cap V_1$.
This function is fixed before the algorithm is implemented.

At the $i$th step, \sumrise~sets $H = H^F(U_{0,i},U_{1,i})$, $u = \sigma(H)$ and asks if $u$ is in $A$. 
If $u$ is in $A$, then it sets  $U_{0,i+1}=U_{0,i} \setminus N(u)$, $U_{1,i+1}=U_{1,i}\setminus \{u\}$ and $S_{i+1} = S_i \cup \{u\}$ (see lines~\ref{sumrise:update-S-1} and~\ref{sumrise:update-U-1}).
Here, $N(u)$ denotes the neighbourhood of the vertex $u$ in the graph $H^F(V_0,V_1)$, that is, $N(u) \coloneqq u+F$.
If $u$ is not in $A$, then it sets $U_{0,i+1}=U_{0,i}$, $U_{1,i+1}=U_{1,i}\setminus \{u\}$ and $S_{i+1} = S_i$ (see lines~\ref{sumrise:update-S-2} and~\ref{sumrise:update-U-2}).

One can check that $S_i \se A \se S_i \cup U_{1,i}$ in all steps.
The algorithm terminates its while loop when $|S_i| = s$ (see line~\ref{sumrise:while-loop-start}).
Upon termination, it sets $S = S_i$, $C_0 = U_{0,i}$ and $C_1 = U_{1,i}$ (see line~\ref{sumrise:set-S-C}).
The~\sumrise~algorithm is formally described below.

\vspace{6pt}

\begin{algorithm}[H]
	\DontPrintSemicolon

	\vspace{3mm} 

	\KwInput{$A,F \se Y$ and $s \in \mathbb{N}_{\ge 1}$}

	\vspace{1mm} 

	\KwOutput{
		sets $S \se A$, $C_0 \se U_{0}$ and $C_{1} \se U_{1}$}

	\vspace{3mm} 

	\tcc{Initialize:}

	\nl $i=0$, 
	$S_i= \emptyset$, $U_{0,i} = U_{0}$ and $U_{1,i} = U_{1}$
	{\label{sumrise:initialize}}\;

	\vspace{1mm} 

	\nl \While{
	$|S_i| < s$
	\label{sumrise:while-loop-start} 
	}			
	{ \vspace{1mm} 
	\nl $H \gets H^F(U_{0,i},U_{1,i})$ and $u \gets \sigma(H)$\;

	\vspace{1mm} 

	\nl \If{
		\vspace{1mm} 
		$u \in A$
        \label{sumrise:u-in-A}
	}							
	{
	\nl set $S_{i+1} = S_i \cup \{u\}$ \label{sumrise:update-S-1}
		\vspace{1mm} 
	
		\nl set $U_{1,i+1} = U_{1,i}\setminus \{u\}$ and $U_{0,i+1} = U_{0,i} \setminus N(u)$ 
		\label{sumrise:update-U-1}\; 
	}
	\Else{
		\label{sumrise:else}
		\nl set  $S_{i+1}=S_i$ \label{sumrise:update-S-2}
		\vspace{1mm} 
		
		\nl set $U_{1,i+1} = U_{1,i}\setminus \{u\}$ and $U_{0,i+1} = U_{0,i}$ 
		\label{sumrise:update-U-2}\;
	}
	\nl $i \mapsto i+1$ {\label{sumrise:increase}}\;}  \vspace{3pt}

	\nl \label{sumrise:set-S-C} \textbf{set}
	$S = S_i$, $C_{0} = U_{0,i}$ and $C_1 = U_{1,i}$  
	\vspace{3pt}

\Return{
	$S$, $C_{0}$ and $C_{1}$.
}{\label{sumrise:return}}

\caption{\label{sumrise} \sumrise}
\end{algorithm}

\vspace{3mm}

Our next claim establishes the most basic properties about our algorithm.
For each set $A$ of size at least $s$ and each set $F$, let \sumrise$(s,A,F)$ denote the \sumrise~algorithm with input $(A,F)$ and stop condition $|S_i| = s$ (see line~\ref{sumrise:while-loop-start}).

\begin{claim}\label{claim:basic-property-sumrise}

	Let $s, \lambda, m \in \mathbb{N}_{\ge 1}$.
	If $(A,F) \in \K_m(s,\lambda)$, then \textnormal{\textsc{sumrise}}$(s,A,F)$ is executed without error and the objects $S$ and $(C_{0},C_{1})$ output from it satisfy the following properties:
	\begin{enumerate}
		\item [$(1)$] $S \se A \se S \cup C_1 \se U_1$;
        \vspace{1mm}
		\item [$(2)$] $(A+F)^c \se C_{0} \se U_0$;
        \vspace{1mm}
		\item [$(3)$]$|S| = s$ and $S \cap C_1 = \emptyset$.
	\end{enumerate}

\end{claim}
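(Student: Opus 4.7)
The plan is to establish a handful of loop invariants for \textsc{sumrise} by induction on the iteration counter $i$, and then to derive the three claimed properties from these invariants together with the while-loop termination rule.

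The first task is to verify that the algorithm runs without error, i.e., that $\sigma(H)$ is well-defined every time it is invoked. This amounts to showing that $U_{1,i} \neq \emptyset$ whenever $|S_i| < s$. I would prove the stronger invariant $A \cap U_{1,i} = A \setminus S_i$ by induction on $i$. The base case $i=0$ uses $S_0 = \emptyset$ and $U_{1,0} = U_1 \supseteq A$, where the inclusion is guaranteed by $(A,F) \in \K_m(s,\lambda)$. For the inductive step, if the selected vertex $u = \sigma(H)$ lies in $A$, then both sides of the equality lose exactly the element $u$; if $u \notin A$, then $A \cap U_{1,i}$ is unaffected by removing $u$ from $U_{1,i}$, and $A \setminus S_i$ is unchanged since $S$ is not modified. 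Combined with $|A| \ge s$, this invariant yields $|U_{1,i}| \ge |A| - |S_i| \ge 1$ throughout every execution of the loop body, so $\sigma(H)$ is always defined.

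Essentially the same induction delivers the remaining structural invariants. The containment $S_i \subseteq A$ is immediate, as the only step that enlarges $S$ is line~\ref{sumrise:update-S-1}, which is executed solely in the branch $u \in A$. The disjointness $S_i \cap U_{1,i} = \emptyset$ holds because each $u$ added to $S_i$ is simultaneously removed from $U_{1,i}$ in line~\ref{sumrise:update-U-1}. The containment $A \subseteq S_i \cup U_{1,i}$ is simply a reformulation of $A \cap U_{1,i} = A \setminus S_i$ combined with $S_i \subseteq A$. The key geometric invariant is $(A+F)^c \subseteq U_{0,i}$: the only step that shrinks $U_0$ is line~\ref{sumrise:update-U-1}, where one subtracts $N(u) = u+F$; and since this step is executed only when $u \in A$, we get $N(u) \subseteq A+F$, hence $N(u) \cap (A+F)^c = \emptyset$.

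Finally, the loop terminates at some step $i = \tau$ with $|S_\tau| = s$. Setting $S = S_\tau$, $C_0 = U_{0,\tau}$, $C_1 = U_{1,\tau}$, and using that the sequences $(U_{j,i})_i$ are monotonically decreasing, the three conclusions of the claim follow directly from the invariants above: property (1) from $S_i \subseteq A \subseteq S_i \cup U_{1,i} \subseteq U_1$; property (2) from $(A+F)^c \subseteq U_{0,i} \subseteq U_0$; and property (3) from the termination condition $|S_\tau| = s$ together with $S_i \cap U_{1,i} = \emptyset$. The only genuinely content-bearing step is the geometric invariant $(A+F)^c \subseteq U_{0,i}$, which is the ``main obstacle''; the rest is routine bookkeeping.
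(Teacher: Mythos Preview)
Your proposal is correct and follows essentially the same approach as the paper: both arguments establish the natural loop invariants $S_i \subseteq A \subseteq S_i \cup U_{1,i} \subseteq U_1$, $S_i \cap U_{1,i} = \emptyset$, and $(A+F)^c \subseteq U_{0,i} \subseteq U_0$ by induction on $i$, splitting into the cases $u \in A$ and $u \notin A$. Your treatment is in fact slightly more careful than the paper's in explicitly verifying that $U_{1,i} \neq \emptyset$ so that $\sigma$ is well-defined; the only point you leave implicit is that the loop must actually reach $|S_i| = s$, which follows at once since each iteration removes a vertex from the finite set $U_{1,i}$ while your invariant guarantees $|U_{1,i}| \ge |A| - |S_i| > 0$ throughout.
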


\begin{proof}

	Let $i \in \mathbb{N}_{\ge 0}$ and
	suppose that \sumrise$(s,A,F)$ executes the $i$th iteration of its while loop.
	For simplicity, denote $u_i = \sigma\big(H^F(U_{0,i},U_{1,i})\big)$.
	In the execution of both if-conditional and else-conditional, 
	the algorithm sets $U_{1,i+1} = U_{1,i}\setminus \{u_{i}\}$, see lines~\ref{sumrise:update-U-1} and~\ref{sumrise:update-U-2}.
	Since $|A| \ge s$ and $A \se U_{1,0} = U_1$, and since $S_{i+1} = S_{i} \cup \{u_{i}\}$ whenever $u_i \in A$, 
	from these observations it follows that the while loop terminates after a finite number of iterations.
	If we let $\tau$ denote the stopping time of the while loop, then from line~\ref{sumrise:set-S-C} we obtain $S = S_{\tau}$, and hence $|S| = s$.

	Now, let us prove by induction that 
	\begin{align}\label{eq:induction-claim}
		(A+F)^c \se U_{0,i} \se U_{0},
		\qquad
		S_i \cap U_{1,i} = \emptyset 
		\qquad
		\text{and}
		\qquad
		S_i \, \se \, A \, \se \, S_i \cup U_{1,i} \se U_{1}
	\end{align}
	for every $i \in \{0,\ldots, \tau\}$.
	For the base case $i = 0$, our assertion follows from the fact that $A \in \K_m(s,\lambda)$ and and the initial conditions $S_0 = \emptyset$, $U_{0,0} = U_{0}$ and $U_{1,0} = U_{1}$.
	Now, suppose it is true for some $i$ and let us prove it for $i+1$. We have two cases:
	\begin{enumerate}
		\item[$*$] If $u_{i} \notin A$, then by lines~\ref{sumrise:update-S-2} and~\ref{sumrise:update-U-2} we have 
		$S_{i+1} = S_i$, 
		$U_{0,i+1} = U_{0,i}$ and 
		$U_{1,i+1} = U_{1,i} \setminus \{u_i\}$. 
		This together with the induction hypothesis shows that~\eqref{eq:induction-claim} is true with $i$ replaced by $i+1$.
        \vspace{1mm}
		\item[$*$] If $u_i \in A$, then lines~\ref{sumrise:update-S-1} and~\ref{sumrise:update-U-1} together with the induction hypothesis immediately imply that 
		$S_{i+1} \se A$, that $S_{i+1} \cap  U_{1,i+1} = \emptyset$ and that $A \se S_{i+1} \cup U_{1,i+1} \se U_1$.
		Now, note that since $u_i \in A$, we have $N(u_i) = u_i + F \se A+F$, and hence $(A+F)^c \se N(u_i)^c$.
		Since $U_{0,i+1} = U_{0,i} \setminus N(u_i)$ (see line~\ref{sumrise:update-U-1}) and $(A+F)^c \se U_{0,i} \se U_0$ by the induction hypothesis, it follows that $(A+F)^c \se U_{0,i+1} \se U_0$.

	\end{enumerate}

	Finally, it follows from line~\ref{sumrise:set-S-C} that $C_0 = U_{0,\tau}$ and $C_1 = U_{1,\tau}$.
	This together with~\eqref{eq:induction-claim} completes our proof.
\end{proof}

	Our next claim establishes another basic property of the \sumrise~algorithm.
	Roughly speaking, it says that we can recover the entire output once we know the final fingerprint $S$.

\begin{claim}\label{claim:container-equal} 
	Let $s, \lambda, m \in \mathbb{N}_{\ge 1}$
	and suppose that $(A,F)$ and $(A',F)$ belong to $\K_m(s,\lambda)$.
	If \textnormal{\sumrise}$(s,A,F)$ and \textnormal{\sumrise}$(s,A',F)$ output the same fingerprint, then they also output the same pair $(C_0,C_1)$.

\end{claim}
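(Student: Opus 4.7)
The plan is to prove by induction on $i$ that the two executions $\sumrise(s,A,F)$ and $\sumrise(s,A',F)$ produce identical triples $(S_i, U_{0,i}, U_{1,i})$ at every step of their while loops. Since the output pair $(C_0, C_1)$ is defined on line~\ref{sumrise:set-S-C} as the terminal value of $(U_{0,i}, U_{1,i})$, such a coincidence of states immediately yields the claim.

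The base case $i = 0$ is immediate from line~\ref{sumrise:initialize}, which sets $S_0 = \emptyset$, $U_{0,0} = U_0$ and $U_{1,0} = U_1$ in both runs. For the inductive step I would argue as follows: if the states agree at step $i$, then the vertex $u_i = \sigma(H^F(U_{0,i}, U_{1,i}))$ selected by both runs is the same, since $\sigma$ is a pre-fixed deterministic function of its argument. Hence the only potential source of divergence at step $i$ is the outcome of the test ``$u_i \in A$?'' versus ``$u_i \in A'$?'' on line~\ref{sumrise:u-in-A}.

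The crux of the argument, and the step I expect to be non-routine, is what one might call a ``one-shot'' property of the algorithm: whenever the test fails for a given run (so the else-branch on line~\ref{sumrise:else} is executed), the vertex $u_i$ is still removed from $U_{1,i+1}$ on line~\ref{sumrise:update-U-2} and never reappears as a candidate, so it cannot belong to the final fingerprint of that run. In particular, if $u_i$ were to lie in the symmetric difference $A \triangle A'$, then the final fingerprints $S$ and $S'$ would disagree on $u_i$, contradicting the hypothesis $S = S'$. This forces $[u_i \in A] = [u_i \in A']$ at every step, so both runs execute the same branch on line~\ref{sumrise:u-in-A} and the updated states $(S_{i+1}, U_{0,i+1}, U_{1,i+1})$ coincide as well. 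The induction closes, both runs terminate at the same stopping time $\tau$ (the first $i$ with $|S_i| = s$), and we conclude that $C_0 = U_{0,\tau}$ and $C_1 = U_{1,\tau}$ agree in both runs.
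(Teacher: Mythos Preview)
Your proof is correct and follows essentially the same inductive argument as the paper: both show that the two executions cannot diverge at any step by deriving a contradiction with the hypothesis $S=S'$. The only cosmetic difference is that the paper, at the point of divergence, invokes the containment $S_{\tau'}(A')\subseteq A'$ from Claim~\ref{claim:basic-property-sumrise} to force $u_i\in A'$, whereas you argue directly via the ``one-shot'' property that $u_i\notin S'$; these are two phrasings of the same observation.
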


	\begin{proof}
		 Let $\tau$ and $\tau'$ be the stopping times of the while loops of \sumrise $(s,A,F)$ and \sumrise $(s,A',F)$, respectively.
		It suffices to show that for every $i \in \{1,\ldots,\tau\}$, the $i$th iterations of the while loops in \sumrise $(s,A,F)$ and \sumrise $(s,A',F)$ are executed in exactly the same way.
		Suppose for contradiction that this is not the case and let $i$ be the first loop iteration for which \sumrise $(s,A,F)$ and \sumrise $(s,A',F)$ differ.
		As all iterations before $i$ coincide in \sumrise $(s,A,F)$ and \sumrise $(s,A',F)$, we have
		\[
			U_{0,i-1}(A) = U_{0,i-1}(A'), \quad
			U_{1,i-1}(A) = U_{1,i-1}(A') \quad
			\text{and}
			\quad 
			S_{i-1}(A)=S_{i-1}(A').
		\]

		Without loss of generality, suppose that at the $i$th iteration, \sumrise $(s,A,F)$ executes the if-conditional and \sumrise $(s,A',F)$ executes the else-conditional.
		Then, we have $S_i (A)= S_{i-1}(A) \cup \{u\}$ and $S_i (A')= S_{i-1}(A')$, where $u \coloneqq \sigma(H^F(U_{0,i-1},U_{1,i-1}))$.
		Now, we note that the sequence of fingerprints is always a chain. That is,
		$S_0(A) \se S_1(A)  \se \cdots \se S_{\tau}(A) $, and hence $u \in  S_{\tau}(A)$.
		As $S_{\tau}(A) = S_{\tau'}(A')$ and $S_{\tau'}(A') \se A'$ (see Claim~\ref{claim:basic-property-sumrise}), it follows that $u \in A'$.
		This is a contradiction, as \sumrise$(s,A',F)$ executed the else-conditional because $u \notin A'$.
	\end{proof}

	Our next lemma bounds the value of $|S| + |C_1|$, given that the sets $S$ and $(C_0, C_1)$ are produced as output by the \sumrise~algorithm.
	In its proof, it is useful to have an upper bound on the maximum degree of $H^F(V_0,V_1)$.
	Note that for each $u \in F$, the set of edges $\{\{v,v+u\}: v \in V_1\} \se H^F(V_0,V_1)$ forms a matching, and hence 
	\begin{align}\label{eq:max-degree-H}
		d(v)\le |F| \text{ for every } v \in V_0 \cup V_1.
	\end{align}

	\begin{lemma}\label{lemma:sumrise-fingerprint}
		Let $\delta >0$ and $m, \lambda,s \in \mathbb{N}_{\ge 1}$ be such that $\delta \lambda s \ge (1+\delta)m$.
		If $(A,F) \in \K_m(s,\lambda)$, then
		the sets $S$ and $C_1$ output by \textnormal{\textsc{sumrise}}$(s,A,F)$ satisfy
		\[|S|+|C_1| \le (1+\delta)m.\]
	\end{lemma}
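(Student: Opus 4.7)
My plan is to translate the target inequality into a lower bound on the number of iterations $\tau$ performed by \sumrise, and then derive that bound from a maximum-degree argument in the bipartite graphs $H_i := H^F(U_{0,i},U_{1,i})$. Each iteration of the while loop removes exactly one element from $U_1$, and Claim~\ref{claim:basic-property-sumrise} gives $|S|=s$ together with $S\cap C_1=\emptyset$, so $|S|+|C_1|=s+|U_{1,\tau}|=s+|U_1|-\tau$. Hence the goal is equivalent to $\tau \ge |U_1|+s-(1+\delta)m$.

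The quantitative ingredient is a lower bound on the degree $d_i:=\deg_{H_i}(u_i)$ of the vertex chosen at each iteration. Because $(A+F)^c\subseteq U_0$ by hypothesis and each yes-step deletes only elements of $u_j+F\subseteq A+F$ from $U_0$, the ``deficit'' $V_0\setminus U_{0,i}$ stays inside $A+F$ for every $i$, whence $|V_0\setminus U_{0,i}|\le|A+F|\le m$. Since every $v\in V_1=Y$ satisfies $v+F\subseteq V_0=Y+Y$, a double-count of the edges of $H_i$ (bounding the ``bad'' pairs $(v,f)\in U_{1,i}\times F$ with $v+f\notin U_{0,i}$ by $|F|\cdot|V_0\setminus U_{0,i}|$) yields
\[e(H_i)\;\ge\;|F|\bigl(|U_{1,i}|-|V_0\setminus U_{0,i}|\bigr)\;\ge\;\lambda\bigl(|U_{1,i}|-m\bigr),\]
and, because $u_i$ has maximum degree on the $U_{1,i}$-side, $d_i\ge e(H_i)/|U_{1,i}|\ge \lambda(|U_{1,i}|-m)/|U_{1,i}|$.

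The final step is to sum over the $s$ yes-steps $i\in I_+$. Each such step removes a disjoint subset of $A+F$ of size $d_i$ from $U_0$, so $\sum_{i\in I_+}d_i\le|A+F|\le m$. Using the monotonicity $|U_{1,i}|\ge|U_{1,\tau}|$ and, more sharply, the spacing inequality $|U_{1,i_k}|\ge|U_{1,\tau}|+(s-k)$ for the $k$-th yes-step (which holds because the remaining $s-k$ yes-steps each decrement $|U_{1,\cdot}|$ by one), the lower bounds on $d_i$ combine into a harmonic inequality
\[\sum_{k=1}^{s}\frac{1}{|U_{1,\tau}|+s-k}\;\ge\;\frac{s}{m}-\frac{1}{\lambda}.\]
Combining this with the hypothesis $\delta\lambda s\ge(1+\delta)m$ (which in particular gives $s\lambda-m\ge m/\delta$) and a sharp estimate of the harmonic sum produces the desired upper bound on $|U_{1,\tau}|$, and hence on $|S|+|C_1|$.

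The most delicate point should be the last step. A loose bound $\sum 1/(|U_{1,\tau}|+s-k)\le s/|U_{1,\tau}|$ only yields $|S|+|C_1|\le(1+\delta)m+O(s)$, so one has to use the convexity of $x\mapsto 1/x$ via the integral comparison $\sum_{k=1}^{s}1/(N+s-k)\le \ln(1+s/N)+O(1/N)$ and exploit the hypothesis in the sharpened form $s/\lambda\le \delta m/(1+\delta)$ to absorb the extra $s$ term and recover exactly $(1+\delta)m$. Everything else---the bookkeeping around how $|U_0|$, $|U_1|$, and $A+F$ shrink, the double-count of edges in $H_i$, and the extraction of $\sum_{i\in I_+}d_i\le m$---is routine.
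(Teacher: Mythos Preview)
Your plan has a genuine gap in the final ``delicate'' step, and the fix you sketch does not work.

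First, the ``sharpened form'' $s/\lambda \le \delta m/(1+\delta)$ of the hypothesis is simply false in general: from $\delta\lambda s \ge (1+\delta)m$ you only get $1/\lambda \le \delta s/((1+\delta)m)$, which says nothing about $s/\lambda$ without a bound on $s^2/m$.

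More importantly, the harmonic inequality you derive cannot yield the exact bound $(1+\delta)m$. Writing $M=(1+\delta)m$ and $N=|C_1|$, your argument gives
\[
\sum_{j=N+1}^{N+s}\frac{1}{j}\;\ge\;\frac{s}{m}-\frac{1}{\lambda}\;\ge\;\frac{s}{M},
\]
and you want to conclude $N+s\le M$. But this implication fails: take $N=M-\lfloor s/2\rfloor$, so $N+s>M$, yet by pairing $\frac{1}{M-k}+\frac{1}{M+k}=\frac{2M}{M^2-k^2}>\frac{2}{M}$ one checks $\sum_{j=N+1}^{N+s}\frac{1}{j}>\frac{s}{M}$. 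Equivalently, the integral comparison $\ln(1+s/N)\ge s/M$ would require $e^{\rho}(1-\rho)\ge 1$ for $\rho=s/M$, which is false for every $\rho>0$. So no amount of convexity bookkeeping will close the gap; your degree bound $d_i\ge\lambda\bigl(1-m/|U_{1,i}|\bigr)$, based on the \emph{current} graph $H_i$, is genuinely too weak and only delivers $|S|+|C_1|\le(1+\delta)m+\Theta(s)$.

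The missing idea is to lower bound each $d_i$ uniformly by the average degree $d$ of the \emph{final} graph $H^F(C_0,C_1)$ on $C_1$: since $C_0\subseteq U_{0,i}$ and $C_1\subseteq U_{1,i}$, the maximum degree in $H_i$ on $U_{1,i}$ is at least the maximum (hence the average) degree in $H^F(C_0,C_1)$ on $C_1$. Your own edge count applied at the final step gives $e(H^F(C_0,C_1))\ge(|S|+|C_1|-m)|F|$, so if $|S|+|C_1|>(1+\delta)m$ then $d>\frac{\delta}{1+\delta}|F|\ge m/s$; summing $d_i\ge d$ over the $s$ yes-steps then forces $|U_0\setminus C_0|>m$, the desired contradiction. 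This is exactly the paper's argument, and it avoids the harmonic sum entirely.
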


	\begin{proof}

		Let $\tau$ be the stopping time of the while loop of \sumrise$(s,A,F)$.
		It follows from lines~\ref{sumrise:update-U-1},~\ref{sumrise:update-U-2} and~\ref{sumrise:set-S-C} that
		\[ 
			C_0 = U_{0,\tau} = U_{0} - \bigcup_{u \in S} N(u),
		\]
		where $N(u) = u+F$.
		As the set $U_0 \setminus C_0$ of removed vertices is contained in $A+F$ and $|A+F| \le m$, we have $|U_0 \setminus C_0| \le m$.

		In $H^F(V_0,V_1)$, note that each vertex in $V_1 = Y$ has degree $|F|$, and hence 
		$e\big(H^F(V_0,V_1)\big) = |F||Y|$.
		As the maximum degree in $H^F(V_0,V_1)$ is $|F|$ (see~\eqref{eq:max-degree-H}), 
		it follows that the number of edges in the graph $H^F(C_{0}, S \cup C_1)$ is at least
		\begin{align}\label{eq:lower-bound-eH}
			\nonumber
			e\big ( H^F(C_{0}, S \cup C_1) \big) & \ge 
			|F||Y|
			-\big ( |V_0  \setminus C_0  | 
			+ |V_1 \setminus (S \cup C_1) | \big )\cdot |F|\\
			& \ge |F| |Y|- \big ( m+|Y|-|C_1|-|S| \big) \cdot |F| \nonumber\\
			& = \big (|C_1| + |S| -m\big) \cdot |F|.
		\end{align}
        Moreover, as $S$ is an isolated set in $H^F(C_0,S \cup C_1)$,
        we observe that $H^F(C_0,C_1)$ and $H^F(C_0,S \cup C_1)$ have the same number of edges.

	Suppose for contradiction that $|S|+|C_1| > (1+\delta)m$.
	For simplicity, let $d$ be the average degree of a vertex of $H^F(C_0,C_1)$ in $C_1$.
	As $S$ and $C_1$ are disjoint (see Claim~\ref{claim:basic-property-sumrise}--(3)),
	it follows from~\eqref{eq:lower-bound-eH} that
	\begin{align}\label{eq:avg-degree-H}
		d=\dfrac{e\left(H^F(C_0,C_1)\right)}{|C_1|} \ge \dfrac{e\big ( H^F(C_{0}, S \cup C_1) \big)}{|S \cup C_1|} \ge \left(1-\dfrac{m}{|S|+|C_1| }\right)|F| > \dfrac{\delta}{1+\delta}|F| \ge \dfrac{m}{s}.
	\end{align}
	Note that in the last inequality we used the assumptions that $\delta\lambda s \ge (1+\delta)m$ and $\lvert{F}\rvert\ge \lambda$. Every time \sumrise~executes line~\ref{sumrise:update-U-1}, it removes the neighbourhood of a vertex which is being added to the fingerprint.
	By the maximum degree order given by $\sigma$, we have that these neighbourhoods have size at least $d$.
	By~\eqref{eq:avg-degree-H}, it follows that \sumrise~removes strictly more than $s \cdot m/s = m$ vertices from $U_0$.
	This is a contradiction, since  the only vertices removed from $U_0$ are those in $U_0 \setminus C_0$ and $|U_0 \setminus C_0|\le m$.
	\end{proof}

	Our next lemma states that if $|U_{1}| \ge m/4$, $(A,F) \in \K_m(s,\lambda)$ with $s\lambda \ge m$, and $H^F(U_{0},U_{1})$ is sufficiently dense, then \sumrise$(s,A,F)$ ``shrinks'' one of the sets $U_{0}$ and $U_{1}$.
	This lemma shall be used in Section~\ref{sec:second-container} to derive Lemma~\ref{lemma:container-shrinking}.

	\begin{lemma}\label{lemma:graph-shrinking}
		Let $\delta>0$ and $m, \lambda, s \in \mathbb{N}_{\ge 1}$ be such that $\lambda s \ge m$ and suppose that $|U_1| \ge m/4$.
		If $(A,F) \in \K_m(s,\lambda)$ and $e \big (H^F(U_{0},U_{1})\big)\ge\delta|F||U_{1}|$, then the sets $S$ and $(C_0,C_1)$ output by \textnormal{\textsc{sumrise}}$(s,A,F)$ satisfy
		\begin{align*}
			|C_{0}| \le |U_{0}|-\frac{\delta m}{6} \qquad \text{or} \qquad |S|+|C_1| \le \left ( 1-\frac{\delta}{6}\right )|U_{1}|.
		\end{align*}
	\end{lemma}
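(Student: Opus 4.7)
The plan is to proceed by contradiction. Write $d_i$ for the $U_{0,i}$-degree of the vertex $u_i=\sigma(H^F(U_{0,i},U_{1,i}))$ picked in the $i$th iteration of \textsc{sumrise}$(s,A,F)$, write $e_i=e(H^F(U_{0,i},U_{1,i}))$ for the current edge count, and let $t$ denote the number of iterations landing in the ``else''-branch. Since every iteration removes one vertex from $U_{1,i}$ while each ``if''-iteration additionally removes $d_i$ vertices from $U_{0,i}$, we have $|S|+|C_1|=|U_1|-(s+t)$ and $|U_0\setminus C_0|=\sum_{i\in\text{If}}d_i=:X$. Thus the negation of the target conclusion is precisely $X<\delta m/6$ and $t<\delta|U_1|/6$, which I assume from now on.

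The main structural fact I would use is that the sequence $(d_i)$ is non-increasing, since removing vertices only lowers the degrees of the surviving ones. Let $i_1<i_2<\cdots<i_s$ be the times of the ``if''-steps; then $d_{i_s}$ is the smallest of the $s$ corresponding degrees, so $d_{i_s}\le X/s<\delta m/(6s)$. Combined with $s\lambda\ge m$ and $|F|\ge\lambda$, which give $m/s\le|F|$, this yields the crucial inequality $d_{i_s}<\delta|F|/6$. By monotonicity the same bound controls the maximum degree at time $i_s$, so summing degrees on the $U_{1,i_s}$-side gives
\[e_{i_s}\le d_{i_s}|U_{1,i_s}|<\tfrac{\delta}{6}|F||U_1|.\]
Combined with the hypothesis $e_0\ge\delta|F||U_1|$ this forces more than $\tfrac{5\delta}{6}|F||U_1|$ edges to be removed during the first $i_s$ iterations.

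Conversely I would upper-bound the edges removed per iteration. At an ``if''-step the deleted vertices are $u_i$ and its $d_i$ neighbours in $U_{0,i}$; since every vertex of $H^F(V_0,V_1)$ has degree at most $|F|$ by~\eqref{eq:max-degree-H}, the number of killed edges equals $\sum_{v\in N(u_i)\cap U_{0,i}}\deg(v)\le d_i|F|$. At an ``else''-step only $u_i$ is removed, killing $d_i\le|F|$ edges. Summing over the first $i_s$ iterations, which contain exactly $s-1$ ``if''-steps (namely $i_1,\ldots,i_{s-1}$) and at most $t$ ``else''-steps, gives
\[e_0-e_{i_s}\le|F|(X-d_{i_s})+|F|t\le|F|(X+t).\]
Substituting the assumed strict bounds $X<\delta m/6$, $t<\delta|U_1|/6$ and $m\le 4|U_1|$ (from $|U_1|\ge m/4$), the right-hand side is strictly less than $\tfrac{5\delta}{6}|F||U_1|$, contradicting the lower bound from the previous paragraph.

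The main delicacy is that the entire chain is tight: the constants $\delta/6$ in the two conclusions and the hypothesis $|U_1|\ge m/4$ have been calibrated so that $\delta m/6+\delta|U_1|/6\le 5\delta|U_1|/6$ exactly, and I must work with strict inequalities throughout in order to obtain a genuine contradiction. The only nontrivial ingredient beyond bookkeeping is the edge-removal bound at an ``if''-step, where the maximum-degree bound from~\eqref{eq:max-degree-H} becomes essential; the rest is a careful accounting of how the monotonicity of $(d_i)$ transfers the smallness of a single ``if''-degree $d_{i_s}$ to a small global edge count $e_{i_s}$.
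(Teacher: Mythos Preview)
Your argument is correct; one minor slip is that $|S|+|C_1|=|U_1|-t$, not $|U_1|-(s+t)$ (each iteration removes exactly one vertex from $U_1$, so $|C_1|=|U_1|-(s+t)$ and then $|S|=s$), but the conclusion $t<\delta|U_1|/6$ you draw from the contradiction hypothesis is still right. The paper uses the same contradiction strategy based on the max-degree order, with a slightly shorter execution: instead of bounding edges removed from above and below, it directly lower-bounds the final edge count $e\big(H^F(C_0,C_1)\big)>\tfrac{\delta}{6}\lambda|U_1|$ (subtracting at most $\big(|U_0\setminus C_0|+|U_1\setminus(C_1\cup S)|\big)|F|<\tfrac{5\delta}{6}|F||U_1|$ from $e\big(H^F(U_0,U_1)\big)$), deduces that the average $C_1$-degree exceeds $\delta\lambda/6$, and hence that each of the $s$ ``if''-steps removes more than $\delta\lambda/6$ vertices from $U_0$, giving $|U_0\setminus C_0|>s\cdot\delta\lambda/6\ge\delta m/6$.
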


 \begin{proof}
 	Suppose for contradiction that $|C_{0}| > |U_{0}|-\frac{\delta m}{6}$ and $|S|+|C_1| > \big(1-\frac{\delta}{6}\big)|U_{1}|$ hold simultaneously.
	Recall that $|S| = s$ (see Claim~\ref{claim:basic-property-sumrise}--(3)).
	Every time a vertex is added into $S$, its neighbourhood is removed from $U_0$ (see line~\ref{sumrise:update-U-1}). Then, the neighbourhood of $S$ in $H^F(U_{0},U_{1})$ is exactly equal to $U_0 \setminus C_0$.
	In particular, we have that $H^{F}(C_0,C_1)$ and $H^{F}(C_0,C_1 \cup S)$ have the same number of edges.
 	As $e \big (H^F(U_{0},U_{1})\big)\ge\delta|F||U_{1}|$ and its maximum degree is at most $|F|$ (see~\eqref{eq:max-degree-H}), it follows that
 	\begin{align}\label{eq:edges-H-eps}
 		e\big(H^F(C_{0},C_{1})\big) & = e\big ( H^{F}(C_0,C_1 \cup S) \big) \nonumber\\
		& \ge \delta |F||U_{1}| - (|U_{0} \setminus C_{0}| + |U_{1}
		 \setminus (C_{1}\cup S)|)|F| \nonumber\\
		& > \delta |F||U_{1}
		| - \left( \dfrac{\delta m}{6}+\dfrac{\delta |U_{1}
		|}{6}\right)|F|\nonumber\\
		& \ge \frac{\delta}{6}\lambda|U_{1}|. 
 	\end{align}
	In the last inequality, we used that $|U_1| \ge m/4$.
 	  Let $d$ be the average degree of a vertex of $H^F(C_{0},C_{1})$ in $C_1$.
 	 As $|U_1| \ge |C_1|$, it follows from~\eqref{eq:edges-H-eps} that 
 	 \begin{align}\label{eq:avg-degree-H-eps}
 	 	d = \dfrac{e\big(H^F(C_{0},C_{1})\big)}{|C_{1}|} > \dfrac{\delta \lambda}{6}.
 	 \end{align}
 	 
 	 Recall that every time \sumrise~executes line~\ref{sumrise:update-U-1} it removes the neighbourhood of a vertex which is being added to the fingerprint.
 	 By the maximum degree order given by $\sigma$, we have that these neighbourhoods have size at least $d$.
 	 By~\eqref{eq:avg-degree-H-eps}, it follows that \sumrise~removes strictly more than $s \cdot \delta \lambda/6 \ge \delta m/6$ vertices from $U_{0}$.
 	 This leads to a contradiction, since  we assumed that $|C_{0}| > |U_{0}|-\frac{\delta m}{6}$.
 \end{proof}

\subsection*{Proof of Lemma~\ref{lemma:first-container}}\label{Sec:proof-first-container}

For a set $F \in \binom{Y}{\ge \lambda}$, define
\[ \C_F \coloneqq \Big\{ S(A) \cup C_1(A): (A,F) \in \K_m(s,\lambda)\Big\}.
\]
By Claim~\ref{claim:container-equal}, we have $C_1(A) = C_1(A')$ whenever $S(A) = S(A')$. Moreover, since $|S(A)| = s$ for every $\K_m(s,\lambda)$ (by Claim~\ref{claim:basic-property-sumrise}) and $|S(A)| + |C_1(A)| \le (1+\delta)m$ (by Lemma~\ref{lemma:sumrise-fingerprint}), it follows that 
\begin{align}\label{eq:container-size-lemma}
	|\C_F| \le \binom{n}{s} \quad \text{and} \quad |C| \le (1+\delta)m \text{ for every } C \in \C_F.
\end{align}
Now, set our collection $\C$ to be
\[ \C = \bigcup_{F \in \binom{Y}{\lambda}} \C_F.\]
Items (1) and (2) follow from~\eqref{eq:container-size-lemma}. Item (3) follows from Claim~\ref{claim:basic-property-sumrise} and the fact that $\K_m(s,\lambda) = \LL_m(s,\lambda)$ when $U_0 = V_0$ and $U_1 = V_1$.\qed

\section{Sunset: the second container algorithm}\label{sec:second-container}

In this section, we prove Lemma~\ref{lemma:container-shrinking} and introduce our second container algorithm, called \sunset, which is the main ingredient behind its proof.
Similar to Section~\ref{sec:first-container}, 
throughout this section we let $n$ be a positive integer, and we fix an abelian group $G$, an $n$-element subset $Y$ of $G$, and
we fix sets $U_{0},U_{1},U_{2}$ such that $U_{0} \se Y+Y$ and $U_{1}, U_{2} \se Y$.

As in Section~\ref{sec:first-container}, we find it instructive to first provide an informal overview of our algorithm.
As an input, \sunset~receives a pair $(A,B)$ such that $A \se U_1$, $B \se U_2$ and $(A+B)^c \se U_0$, 
a real number $\delta$ and natural numbers $s$ and $\lambda$.
As an output, \sunset~returns subsets $(S,F)\se (A,B)$ and $(C_{0},C_{1},C_{2}) \se (U_{0},U_{1},U_{2})$
such that 
\[
	S \se A \se S \cup C_{1}, 
	\quad 
	F \se B \se F \cup C_{2}, 
	\quad 
	\text{and}
	\quad
	(A+B)^c \se C_{0}.
\]

As in the previous section, we call $(S,F)$ the \emph{fingerprint} pair for $(A,B)$ and we call $(C_{0},C_{1}\cup S,C_{2} \cup F)$ the \emph{container triple} for $((A+B)^c,A,B)$, respectively.

In its while loop from line~\ref{sunset:while-loop-start} to~\ref{sunset:increase},~\sunset~builds two sequences of sets, namely $(F_i)_{i=0}^{\tau}$ and $(U_{2,i})_{i=0}^{\tau}$, where $\tau$ denotes the stopping time of the while loop.
Their first elements are set to be $F_0 = \emptyset$ and $U_{2,0} = U_{2}$, respectively (see line~\ref{sunset:initialize}).
As in Section~\ref{sec:first-container}, in order to build such sequences,
\sunset~makes use of an auxiliary function $\sigma: \{\HH \se \HH(V_0,V_1,V_2): V(\HH)\cap V_2 \neq \emptyset\} \to V_2$ with the following property. For every $\HH$ in the domain of $\sigma$, $\sigma(\HH)$ is a vertex of maximum degree in $\HH$ among the vertices in $V(\HH)\cap V_2$.
This function is fixed before the algorithm is implemented.

The~\sunset~algorithm builds $(F_i)_{i=0}^{\tau}$ and $(U_{2,i})_{i=0}^{\tau}$ by going vertex by vertex in $U_{2}$ according to the ordering $\sigma$ (see line~\ref{sunset:sigma})
and asking whether the selected vertex, say $u$, is in $B$ or not. If it is, then it adds $u$ to the fingerprint (see line~\ref{sunset:add-to-F}); otherwise, it does not (see line~\ref{sunset:not-add-to-F}). 
Then, \sunset~updates $U_{2}$ by removing $u$ from it.
The algorithms ends its while loop after adding $\lambda$ vertices to its fingerprint.
It is not important to know how much $U_{2}$ has shrunk during the while loop.

Once~\sunset~has built 
$(F_i)_{i=0}^{\tau}$ and $(U_{2,i})_{i=0}^{\tau}$, it sets $F = F_{\tau}$ and $C_2 = U_{2,\tau}$ (see line~\ref{sunset:define-2}).
We observe that until line~\ref{sunset:define-2} 
the sets $U_{0}$ and $U_{1}
$ have not yet been updated.
In line~\ref{sunset:if}, \sunset~enters an if-conditional and asks whether $e(\HH_{\tau}) \ge \delta|U_{1}
||C_2|$ or $e(H^F(U_0,U_1)) \ge \delta|F||U_1|$.
If none of these conditions are satisfied, then~\sunset~stops and output the following sets: $(S,F) = (\emptyset, F_{\tau})$ and $(C_{0}, C_{1},C_{2}) = (U_{0}, U_{1}, U_{2,\tau})$ (see line~\ref{sunset:else}).
Otherwise,~\sunset~runs~\sumrise$(s,A,F)$ in line~\ref{sunset:run-sumrise}, which is our graph container algorithm applied to $H^{F}(U_{0},U_{1}
)$.
We will see that the condition $e(\HH_{\tau}) \ge \delta|U_{1}||C_2|$ also ensures that $H^{F}(U_{0},U_{1}
)$ has at least $\delta |F||U_1|$ edges, and hence~\sumrise~is able to shrink either $U_{0}$ or $U_{1}
$.
We then let $S$ and $(C_0,C_{1})$  be the objects output by~\sumrise$(s,A,F)$ (see line~\ref{sunset:define-3}).
Finally,~\sunset~output 
$(S,F)$ and $(C_{0},C_{1},C_{2})$.
The algorithm~\sunset~is formally described below.

\begin{algorithm}[H]
	\DontPrintSemicolon
	
	\vspace{3mm} 
	
	\KwInput{$\delta \in \mathbb{R}_{>0}$, $s,\lambda \in \mathbb{N}_{\ge 1}$ and $(A,B)$ such that $A \se U_1$, $B \se U_2$ and $(A+B)^c \se U_0$}
	
	\vspace{1mm} 
	
	\KwOutput{
		A pair $(S,F)$ and a triple $(C_0,C_1,C_2)$ such that 
	
	 \hspace{53pt} $S \se A \se S \cup C_{1}$, $F \se B \se F \cup C_{2}$ and $(A+B)^c \se C_{0}$.
			}

	\vspace{3mm} 
	
	\tcc{Initialize:}

	\nl $i=0$, $F_i = \emptyset$ and $U_{2,i}=U_{2}$.
	{\label{sunset:initialize}}\;
	
	\vspace{1mm} 
	
	\nl \While
	{
		\label{sunset:while-loop-start}
		$|F_i| < \lambda$
	}			
	{ 
		\vspace{1mm} 
		\nl $\HH_i \gets \HH(U_{0},U_{1},U_{2,i})$ 
		and 
		$u \gets \sigma(\HH_{i})$ 
		\label{sunset:sigma}\;
		\vspace{1mm}

		\nl \If{
			\vspace{1mm} 
			$u \in B$
			\label{sunset:u-in-B}
		}							
		{
			\nl set $F_{i+1} = F_i \cup \{u\}$ and $U_{2,i+1} = U_{2,i} \setminus \big \{u\}$ 
			\label{sunset:add-to-F}\; 
		}

		\nl \Else{
			\label{sunset:u-not-in-B}
			\nl 
			set  $F_{i+1}=F_i$ and $U_{2,i+1} = U_{2,i} \setminus \{u\}$
			{\label{sunset:not-add-to-F}}
			\;
			}

		\nl $i \mapsto i+1$ {\label{sunset:increase}}\;}
	\vspace{1mm} 
	\nl set $F = F_{i}$, $C_2 = U_{2,i}$ and $H = H^F(U_0,U_1)$ \label{sunset:define-2}\;
	\vspace{1mm}

	\nl \If{\vspace{1mm}  
	$e(\HH_{i}) \ge \delta |U_{1}||C_{2}|$ or $e(H) \ge \delta |F||U_1|$ 
	\label{sunset:if}}
	{
	\vspace{1mm} 
	\nl run \sumrise$(s,A,F)$ let $S$ and $(C_{0},C_{1})$ be objects in its output \label{sunset:run-sumrise}\label{sunset:define-3}
	
	} 

	\nl \Else{ \nl set $S = \emptyset$, $C_{0} = U_{0}
	$ and $C_{1}  = U_{1}$ \label{sunset:else}}

	\nl \Return{
		$(S,F)$ and $(C_{0},C_{1},C_{2})$. 
	}{\label{sunset:return}}
	
	\caption{\label{sunset} \sunset}
\end{algorithm}

	\vspace{3mm}

Recall the definition of $\I_{m}(s,\lambda,U_{0},U_{1},U_{2})$ in~\eqref{eq:family-F-U}.
Since $U_0, U_1$ and $U_2$ are fixed in this section,
for simplicity, we denote
$$\I_m(s,\lambda)\coloneqq \I_{m}(s,\lambda, U_{0},U_{1},U_{2}).$$

Similarly to Claim~\ref{claim:basic-property-sumrise}, our next claim establishes the most basic property about the \sunset~algorithm.
We write $\sunset_{\delta}(s,\lambda,A,B)$ to refer to our algorithm when $(A,B)$, $\delta, s$ and $\lambda$ are in the input.

\begin{claim}\label{claim:2nd-container-fingerprint}

Let $\delta>0$ and $s, \lambda, m \in \mathbb{N}_{\ge 1}$.
If $(A,B) \in \I_m(s,\lambda)$, then $\sunset_{\delta}(s,\lambda,A,B)$ is executed without error and the objects $(S,F)$ and $(C_{0},C_{1},C_{2})$ output from it satisfy the following properties:
\begin{enumerate}
	\item [$(1)$] $(A+B)^c \se C_{0} \se U_0$;
	\vspace{1mm}
	\item [$(2)$] $|S| \in \{0,s\}$, $S \cap C_{1} = \emptyset$ and $S \, \se \, A \, \se \, S \cup C_{1} \se U_1$;
	\vspace{1mm}
	\item [$(3)$]$|F| = \lambda$, $F \cap C_2 = \emptyset$ and $F \, \se \, B \, \se \, F \cup C_2 \se U_2$.
\end{enumerate}

\end{claim}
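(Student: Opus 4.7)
The proof of Claim~\ref{claim:2nd-container-fingerprint} will essentially mirror the structure of Claim~\ref{claim:basic-property-sumrise} for property (3) (the part dealing with the while loop of \sunset), then use a case analysis on the if-conditional at line~\ref{sunset:if} to derive properties (1) and (2). The plan is first to verify termination of the while loop and establish property (3), and then, in the case that \sumrise is invoked, verify that its input $(A,F)$ belongs to $\K_m(s,\lambda)$ so that Claim~\ref{claim:basic-property-sumrise} may be applied.

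For property (3) and termination, I would show by induction on $i$ that after each iteration of the while loop one has
\[ F_i \se B \se F_i \cup U_{2,i} \se U_2, \qquad F_i \cap U_{2,i} = \emptyset.\]
The base case $i=0$ is immediate from $F_0 = \emptyset$, $U_{2,0} = U_2$ and the hypothesis $(A,B) \in \I_m(s,\lambda)$. The inductive step splits into the two branches of the if-else at line~\ref{sunset:u-in-B}: if the picked vertex $u = \sigma(\HH_i)$ lies in $B$, then $F_{i+1} = F_i \cup \{u\}$ and $U_{2,i+1} = U_{2,i} \setminus \{u\}$ give the claim; otherwise $u \notin B$, so removing it from $U_{2,i}$ preserves the containment $B \se F_i \cup U_{2,i+1}$. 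Since each iteration strictly shrinks $U_{2,i}$, and since $|B| \ge \lambda$ together with $B \se U_{2,i} \cup F_i$ ensures we have at least $\lambda - |F_i|$ candidates in $B$ still available, the loop reaches $|F_i| = \lambda$ in finitely many steps. At termination we obtain $|F| = \lambda$, $F \cap C_2 = \emptyset$ and $F \se B \se F \cup C_2 \se U_2$, which is exactly property (3).

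For properties (1) and (2), I would distinguish the two cases according to the condition at line~\ref{sunset:if}. If the condition fails, then by line~\ref{sunset:else} we set $S = \emptyset$, $C_0 = U_0$ and $C_1 = U_1$; the properties $|S| = 0$, $S \cap C_1 = \emptyset$, $S \se A \se S \cup C_1 \se U_1$, and $(A+B)^c \se C_0 \se U_0$ all follow trivially from $(A,B) \in \I_m(s,\lambda)$. If the condition holds, \sumrise$(s,A,F)$ is invoked at line~\ref{sunset:run-sumrise}; the key step here is to verify that $(A,F) \in \K_m(s,\lambda)$ with the sets $U_0,U_1$ fixed at the top of Section~\ref{sec:second-container}. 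Indeed, $A \se U_1$ and $|A| \ge s$ hold by the definition of $\I_m(s,\lambda)$, $|F| = \lambda$ by property (3) just established, and since $F \se B$ we have $A+F \se A+B$, so $|A+F| \le |A+B| \le m$ and $(A+F)^c \supseteq (A+B)^c \supseteq $, well, we need $(A+F)^c \se U_0$, which follows since $(A+B)^c \se U_0$ by hypothesis and $(A+F)^c \supseteq (A+B)^c$ — actually we need the reverse inclusion, so let me restate: we need $(A+F)^c \subseteq U_0$, but in the definition of $\K_m$ this means the complement taken within $Y+Y$, so $(A+F)^c = (Y+Y) \setminus (A+F) \supseteq (Y+Y)\setminus (A+B) = (A+B)^c \se U_0$; this containment goes the wrong way. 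The correct check is to note that Claim~\ref{claim:basic-property-sumrise} only uses $(A+F)^c \se U_0$ to certify $(A+F)^c \se C_0$, and what we actually need downstream is $(A+B)^c \se C_0$, so I would instead observe that in the \sumrise~analysis every vertex removed from $U_{0,i}$ belongs to $A+F \se A+B$; hence $U_0 \setminus C_0 \se A+B$, giving $(A+B)^c \se C_0 \se U_0$. This yields property (1), and property (2) then comes directly from parts (1) and (3) of Claim~\ref{claim:basic-property-sumrise}.

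The main obstacle is this subtlety around the complements: the definition of $\K_m(s,\lambda)$ pins the complement of $A+F$ inside $U_0$, but \sunset is invoked on a pair whose hypothesis is about the complement of $A+B$. I expect the cleanest resolution is to avoid appealing to $(A,F) \in \K_m(s,\lambda)$ verbatim and instead use the finer observation that \sumrise~only removes vertices of $U_0$ that lie in $A+F$, which is contained in $A+B$; all other conclusions of Claim~\ref{claim:basic-property-sumrise} depend only on $A \se U_1$ and $|A| \ge s$, which do hold. Modulo this careful bookkeeping, the rest is a routine combination of Claim~\ref{claim:basic-property-sumrise} with the explicit initializations in \sunset.
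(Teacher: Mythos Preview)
Your proof is correct and follows essentially the same approach as the paper: induction on the while loop to establish property~(3), then a case split on the if-conditional at line~\ref{sunset:if} for properties~(1) and~(2). You are in fact more careful than the paper at one point: the paper simply invokes Claim~\ref{claim:basic-property-sumrise} when \sumrise\ is called, citing only $|A|\ge s$, without verifying that $(A,F)\in\K_m(s,\lambda)$; you correctly observe that the required hypothesis $(A+F)^c\subseteq U_0$ need not follow from $(A+B)^c\subseteq U_0$ (the inclusion goes the wrong way), and your resolution---noting that \sumrise\ removes from $U_0$ only vertices of the form $u+f$ with $u\in S\subseteq A$ and $f\in F$, hence $U_0\setminus C_0\subseteq A+F\subseteq A+B$, whence $(A+B)^c\subseteq C_0$---is exactly what is needed to make the argument rigorous.
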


\begin{proof}
We first note that, since $|B| \ge \lambda$, 
the if-conditional in line~\ref{sunset:u-in-B} is executed exactly $\lambda$ times.
This implies that no error occurs during the execution of the while loop in lines~\ref{sunset:while-loop-start} to~\ref{sunset:increase},
which then finishes in finite time.
Let $(F_i)_{i=0}^{\tau}$ and $(U_{2,i})_{i=0}^{\tau}$ be the sequences built in the while loop, where $\tau$ is its stopping time.
By line~\ref{sunset:define-2} we have $F = F_{\tau}$, and hence $|F| = |F_{\tau}|=\lambda$.

Now we prove by induction that
\begin{align}\label{eq:basic-property-sunset}
	F_i \cap U_{2,i} = \emptyset
	\qquad
	\text{and}
	\qquad
	F_i \, \se \, B \, \se \, F_i \cup U_{2,i} \se U_2
\end{align}
for all $i \in \{0,\ldots,\tau\}$.
As $F_0 = \emptyset$ and $U_{2,0} = U_{2}$, our assertion is true for $i=0$. 
Now, suppose it is true for some $i$ and let us prove it for $i+1$.
For simplicity, we set $u = \sigma(\HH_i) \in U_2$.
We have two cases:

\begin{enumerate}
	\item [$\ast$] If $u \notin B$, then by line~\ref{sunset:not-add-to-F} we have 
	$F_{i+1} = F_i$ and $U_{2,i+1} = U_{2,i} \setminus \{u\}$. 
	This together with the induction hypothesis shows that 
	$F_{i+1} \cap U_{2,i+1} = \emptyset$ and that $F_{i+1} \, \se \, B \, \se \, F_{i+1} \cup U_{2,i+1} \se U_2$. 
\vspace{1mm}
	\item [$\ast$] 
	If  $u \in B$, then line~\ref{sunset:add-to-F} together with the induction hypothesis shows that 
	$F_{i+1} \cap U_{2,i+1} = \emptyset$ and that $F_{i+1} \, \se \, B \, \se \, F_{i+1} \cup U_{2,i+1} \se U_2$. 
\end{enumerate}

\noindent As $F = F_{\tau}$ and $C_{2} = U_{2,\tau}$ (see line~\ref{sunset:define-2}), this together with~\eqref{eq:basic-property-sunset} shows that item (3) holds.

For items (1) and (2), we have two possible outcomes:
either the algorithm satisfies the if-conditional in line~\ref{sunset:if}, and hence executes \sumrise$(s,A,F)$; or it satisfies the else-conditional in line~\ref{sunset:else} and sets $S= \emptyset$ and $(C_{0},C_{1}) = (U_{0},U_{1})$.
In the first case, since $|A| \ge s$, Claim~\ref{claim:basic-property-sumrise} shows that \sumrise$(s,A,F)$ is executed without error and items (1) and (2) are satisfied, with $|S| = s$; in the second case items (1) and (2) follow immediately, with $|S| = 0$.
\end{proof}

\begin{claim}\label{claim:basic-property-sunset}

Let $\delta>0$ and $s, \lambda, m \in \mathbb{N}_{\ge 1}$ and suppose that $(A,B)$ and $(A',B')$ belong to $\I_m(s,\lambda)$.
If $\sunset_{\delta}(s,\lambda,A,B)$ and $\sunset_{\delta}(s,\lambda,A',B')$ output the same fingerprint, then they output the same triple $(C_{0},C_{1},C_{2})$.

\end{claim}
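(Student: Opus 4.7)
The plan is to adapt the argument of Claim~\ref{claim:container-equal} to the two-phase structure of \sunset, which first runs a while loop producing $F$ and then optionally invokes \sumrise~to produce $S$. Denote by $(S,F)$ and $(S',F')$ the fingerprint pairs output by $\sunset_{\delta}(s,\lambda,A,B)$ and $\sunset_{\delta}(s,\lambda,A',B')$ respectively, and assume $(S,F)=(S',F')$. Let $(F_i,U_{2,i})_{i=0}^{\tau}$ and $(F'_i,U'_{2,i})_{i=0}^{\tau'}$ denote the sequences built by the two while loops, with stopping times $\tau$ and $\tau'$.

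The first step is to show by induction on $i$ that $F_i=F'_i$ and $U_{2,i}=U'_{2,i}$ for all feasible $i$, and consequently $\tau=\tau'$. The base case $i=0$ is immediate from line~\ref{sunset:initialize}. For the inductive step, since $\HH_i=\HH(U_0,U_1,U_{2,i})$ depends only on $U_{2,i}$ (the sets $U_0, U_1$ being fixed throughout this section), the vertex $u=\sigma(\HH_i)$ chosen in line~\ref{sunset:sigma} is the same in the two executions. Suppose for contradiction that the executions take different branches at step $i$; without loss of generality assume $u\in B$ while $u\notin B'$. Then the first execution places $u$ into $F_{i+1}$ in line~\ref{sunset:add-to-F}, so $u\in F_\tau=F$. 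In the second execution, however, $u$ is removed from $U'_{2,i+1}$ in line~\ref{sunset:not-add-to-F} without entering $F'$, and since the sequence $(U'_{2,j})_{j}$ is monotonically decreasing and each element of $F'$ must belong to the active set $U'_{2,j}$ at the moment it is added, we conclude $u\notin F'$, contradicting $F=F'$. Thus the two executions coincide step by step; the common stopping rule $|F_i|=\lambda$ forces $\tau=\tau'$, and so $C_2=U_{2,\tau}=U'_{2,\tau'}=C'_2$.

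Once the while loops have been shown to coincide, the graph $H=H^F(U_0,U_1)$ and the hypergraph $\HH_\tau$ are identical in both executions, since $F$, $U_0$, $U_1$ and $C_2=U_{2,\tau}$ all agree. Consequently the check in line~\ref{sunset:if} evaluates the same way for both. If both executions fall into the else-branch in line~\ref{sunset:else}, then $S=\emptyset=S'$ and $(C_0,C_1)=(U_0,U_1)=(C'_0,C'_1)$ by construction. Otherwise both enter the if-branch and invoke $\sumrise(s,A,F)$ and $\sumrise(s,A',F)$, respectively; by assumption these return the same fingerprint $S=S'$, so Claim~\ref{claim:container-equal} (applied with the common second argument $F$) yields $(C_0,C_1)=(C'_0,C'_1)$. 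In either case the triples agree, completing the proof.

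The main obstacle is the inductive step in the while-loop argument: the delicate point is the observation that a vertex dropped from $U_2$ cannot re-enter later, so a vertex rejected at step $i$ in one execution can never subsequently appear in that execution's $F$. This is exactly what converts $u\notin B'$ into $u\notin F'$ and produces the contradiction with $F=F'$; without it, one could not rule out the second execution ``catching up'' later and matching $F$ through a different sequence of choices.
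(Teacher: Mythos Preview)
Your proof is correct and follows essentially the same approach as the paper's: show that the while loops coincide step by step (so $C_2=C_2'$), then use the agreement of $F$, $\HH_\tau$ and $H$ together with Claim~\ref{claim:container-equal} to conclude $(C_0,C_1)=(C_0',C_1')$. The only minor difference is in how you derive $u\notin F'$ at the point of divergence: the paper simply invokes the containment $F'\subseteq B'$ from Claim~\ref{claim:2nd-container-fingerprint}, whereas you argue directly from the monotonicity of $(U'_{2,j})_j$ that a vertex removed from the active set can never later be added to the fingerprint; both arguments are valid and yield the same contradiction.
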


\begin{proof}
Let $(F_i,U_{2,i})_{i=0}^{\tau}$ and $(F_i',U_{2,i}')_{i=0}^{\tau'}$ be the sequences generated by $\sunset_{\delta} \,(s,\lambda,A,B)$ and $\sunset_{\delta}(s,\lambda,A',B')$ in the while loop in lines~\ref{sunset:while-loop-start} to~\ref{sunset:increase}, respectively. 
Similarly, we denote by $(S,F)$, $(C_0,C_1,C_2)$ and $(S',F')$, $(C_0',C_1',C_2')$ the outputs of $\sunset_{\delta}(s,\lambda,A,B)$ and $\sunset_{\delta}(s,\lambda,A',B')$, respectively.

First, we show that if $(S,F) = (S',F')$, then the while loop is executed exactly in the same way for both pairs $(A,B)$ and $(A',B')$.
Suppose by contradiction that this is not the case and let $i$ be the first loop iteration for which $\sunset_{\delta}(s,\lambda,A,B)$ and $\sunset_{\delta}(s,\lambda,A',B')$ differ.
As all iterations before $i$ coincide in $\sunset_{\delta}(s,\lambda,A,B)$ and $\sunset_{\delta}(s,\lambda,A',B')$, we have
\[F_{i-1}=F_{i-1}' \qquad \text{and} \qquad U_{2,i-1}= U_{2,i-1}'.\]

Without loss of generality, suppose that at the $i$th iteration,
$\sunset_{\delta}(s,\lambda,A,B)$ executes the if-conditional in line~\ref{sunset:u-in-B} and $\sunset_{\delta}(s,\lambda,A',B')$ executes the else-conditional in line~\ref{sunset:u-not-in-B}.
Then, we have $F_i = F_{i-1} \cup \{u\}$ and $F_i' = F_{i-1}'$, where $u \coloneqq \sigma(\HH_{i-1})$.
Now, we note that the sequences of fingerprints $(F_i)_{i=0}^{\tau}$ and $(F_i')_{i=0}^{\tau'}$ are sequences of chains.
That is,
\[F_0 \se F_1  \se \cdots \se F_{\tau} = F
\qquad \text{and} \qquad
F_0' \se F_1'  \se \cdots \se F_{\tau'}' = F',
\]
and hence $u \in  F_{\tau} = F = F'$.
By Claim~\ref{claim:2nd-container-fingerprint}, we have $F' \se B'$, and hence $u \in B'$.
This is a contradiction, as $\sunset_{\delta}(s,\lambda,A',B')$ executed the else-conditional in line~\ref{sunset:not-add-to-F} because $u \notin B'$.
Once the while loop is executed exactly in the same way in both $\sunset_{\delta}(s,\lambda,A,B)$ and $\sunset_{\delta}(s,\lambda,A',B')$, 
it follows from line~\ref{sunset:define-2} that $C_2 = C_2'$.

Now, let $\HH = \HH(U_{0},U_{1},U_{2,\tau})$ and $\HH' = \HH(U_{0},U_{1},U_{2,\tau'}')$.
As $U_{2,\tau'}' = U_{2,\tau}$, we have $\HH = \HH'$.
It follows from lines~\ref{sunset:if} to~\ref{sunset:else} and Claim~\ref{claim:container-equal} that $C_{0}=C_{0}'$ and $C_{1}=C_{1}'$. This proves our claim.
\end{proof}

Now we are ready to prove Lemma~\ref{lemma:container-shrinking}.

\subsection*{Proof of Lemma~\ref{lemma:container-shrinking}}\label{sec:proof-container-shrinking}

For simplicity, set $\I_m(s,\lambda) = \I_{m}(s,\lambda, U_{0},U_{1},U_{2})$.
Fix a pair $(A,B) \in \I_m(s,\lambda)$ and let $(S,F)$ and $(C_{0},C_1,C_2)$ be the output of $\sunset_{\delta}(s,\lambda,A,B)$.
For simplicity, we set $\HH\coloneqq\HH(U_{0},U_{1},C_2)$ and $H\coloneqq H^{F}(U_{0},U_{1})$.
Before defining what is the container triple for $((A+B)^c,A,B)$, we shall first prove the following claim.

\begin{claim}\label{claim:dense-graph}
If $e(\HH) \ge \delta|U_{1}
||C_2|$, then $e(H) \ge \delta|F||U_{1}|$.
\end{claim}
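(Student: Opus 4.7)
The plan is to reduce the claim to a greedy-selection argument by re-expressing both $e(\HH)$ and $e(H)$ as sums of a single ``partner-count'' function on $U_{2}$. First I would define $d(u) := |\{a \in U_{1} : a+u \in U_{0}\}|$ for each $u \in U_{2}$. The key observation is that for any $V \se U_{2}$ and any $u \in V$, the degree of $u$ in $\HH(U_{0},U_{1},V)$ is precisely $d(u)$, independent of the rest of $V$, because an edge through $u$ is a pair $(a,a+u)$ with $a \in U_{1}$ and $a+u \in U_{0}$. Hence $e(\HH) = \sum_{u \in C_{2}} d(u)$. Similarly, an edge of $H = H^{F}(U_{0},U_{1})$ is a pair $(a,c) \in U_{1} \times U_{0}$ with $c-a \in F$, which corresponds bijectively (since $G$ is abelian) to a triple $(a,u,c) \in U_{1} \times F \times U_{0}$ with $c = a+u$, giving $e(H) = \sum_{u \in F} d(u)$.

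Second, I would exploit the greedy max-degree rule of $\sigma$. By the previous observation, at iteration $i$ the vertex $\sigma(\HH_{i})$ attains $d$-value $\max_{v \in U_{2,i}} d(v)$, since the ``degree'' comparison used to define $\sigma$ is exactly a $d$-value comparison. Fix $u \in F$ and let $i$ be the iteration at which $u$ was added to the fingerprint (so $u = \sigma(\HH_{i})$ and $u \in B$). Since the sequence $(U_{2,j})_{j=0}^{\tau}$ is monotonically decreasing, $C_{2} = U_{2,\tau} \se U_{2,i}$, so
\[d(u) \;\ge\; \max_{v \in C_{2}} d(v) \;\ge\; \frac{1}{|C_{2}|} \sum_{v \in C_{2}} d(v) \;=\; \frac{e(\HH)}{|C_{2}|}.\]
Summing this inequality over $u \in F$ and plugging in the hypothesis $e(\HH) \ge \delta |U_{1}||C_{2}|$ gives $e(H) = \sum_{u \in F} d(u) \ge |F| \cdot \frac{e(\HH)}{|C_{2}|} \ge \delta |F| |U_{1}|$, as required.

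I do not anticipate any real obstacle. The only point that requires a moment of care, and where a careless reading could go wrong, is establishing that the $\HH_{i}$-degree of a vertex $u \in U_{2,i}$ really depends only on $u$, $U_{0}$ and $U_{1}$ (not on the surviving set $U_{2,i}$); once this ``vertex-intrinsic'' nature of $d$ is pinned down, both edge counts become sums of the same function $d$ over disjoint subsets $F$ and $C_{2}$ of $U_{2}$, and the max-degree selection of $\sigma$ immediately forces the $F$-sum to dominate the $C_{2}$-sum in the averaged sense stated above.
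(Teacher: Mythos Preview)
Your proof is correct and follows essentially the same approach as the paper's: both decompose $e(H)$ as a sum over $u\in F$ of the ``partner-count'' $d(u)=|\{a\in U_1:a+u\in U_0\}|$ (the paper writes this as a disjoint union $E(H)=\bigcup_{u\in F}E_u$), and both use the max-degree selection of $\sigma$ together with $C_2\subseteq U_{2,i}$ to deduce that each $d(u)\ge \delta|U_1|$. Your write-up is in fact slightly more explicit than the paper's about why the $\HH_i$-degree of $u$ is independent of $U_{2,i}$, which is exactly the point that makes the argument go through.
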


\begin{proof}[Proof of Claim~\ref{claim:dense-graph}]
For each $u\in{F}$, define
$E_u\coloneqq\{\{u+v,v\}: u+v\in{U_{0}}\text{ and }v\in{U_{1}}\}$. We have
\[E(H) = \bigcup_{u \in F} E_u.
\]
Note that $|E_u|$ is exactly the number of hyperedges containing $u$ at the moment that $u$ is added to the fingerprint and that these sets $E_u$, $u\in{F}$, are pairwise disjoint.
Since $e(\HH) \ge \delta|U_{1}
||C_2|$, 
the maximum degree of a vertex of $\HH$ in $C_2$ is at least $\delta|U_{1}|$, and hence $|E_u| \ge \delta|U_{1}|$ for every $u \in F$.
Then, it follows that $e(H) \ge \delta|F||U_{1}|$.
\end{proof}

If $e(\HH) \ge \delta |U_{1}||C_2|$ or $e(H) \ge \delta|F||U_{1}|$, then $\sunset_{\delta}(s,\lambda,A,B)$ runs \sumrise$(s,A,F)$.
Since $(A,F) \in \K_{m}\big(s,\lambda\big)$, $ \lambda s \ge m$ and $|U_1| \ge m/4$, it follows from Claim~\ref{claim:dense-graph} and Lemma~\ref{lemma:graph-shrinking} that 
\begin{align}\label{eq:container-shrinking-11}
|C_{0}| \le |U_{0}|-\frac{\delta m}{6} \qquad \text{or} \qquad |S|+|C_1| \le \left (1-\frac{\delta}{6}\right)|U_{1}|.
\end{align}
On the other hand, if $e(\HH) < \delta |U_{1}||C_2|$ and $e(H) < \delta |F||U_1|$, then $\sunset_{\delta}(s,\lambda,A,B)$ executes the else-conditional in line~\ref{sunset:else} and it sets $S = \emptyset$, $C_0=U_0$ and $C_1 = U_{1}$. Namely, $\mathcal{H}=\mathcal{H}(C_0,C_1,C_2)$ and $H=H^F(C_0,C_1)$. Since $$E\left(\mathcal{H}(C_0,C_1,C_2\cup F)\right)=E\left(\mathcal{H}(C_0,C_1,C_2)\right)\,\dot\cup\, E\left(\mathcal{H}(C_0,C_1,F)\right)$$ and $e\left(\mathcal{H}(C_0,C_1,F)\right)=e\left(H^F(C_0,C_1)\right)$, the number of edges in the hypergraph $\HH(C_0,C_1,C_2 \cup F)$ is at most
\begin{align}\label{eq:container-shrinking-12}	
e\left(\HH(C_0,C_1,C_2 \cup F)\right) = e(\HH) +  e(H) < \delta |U_{1}|(|C_2|+|F|).
\end{align}

Finally, we set $$(D_0,D_1,D_2) = (C_0,C_1\cup S,C_2 \cup F)$$
to be the container triple for $((A+B)^c,A,B)$.
By Claim~\ref{claim:2nd-container-fingerprint} we have
\[ ((A+B)^c,A,B) \se (D_0,D_1,D_2) \se (U_0,U_1,U_2).\]
Now, we let $\D$ be the collection of triples $(D_0,D_1,D_2)$ when $(A,B)$ runs over $\I_m(s,\lambda)$.
By~\eqref{eq:container-shrinking-11} and~\eqref{eq:container-shrinking-12}, each of the triples in $\D$ satisfies at least one of the items (1) and (2).
By Claim~\ref{claim:basic-property-sunset}, for each pair of fingerprints $(S,F)$ there is a unique container triple $(C_0,C_1 \cup S,C_2 \cup F)$ associated with it.
Moreover, since we have $|F| = \lambda$ and $|S| \in \{0,s\}$ by Claim~\ref{claim:2nd-container-fingerprint}, it follows that
\[|\D| \le \left ( \binom{n}{s} +1 \right )\binom{n}{\lambda}.\]
This concludes our proof.\qed

\section{Counting sets with small sumset}\label{sec:counting}

This section is dedicated to proving Theorems~\ref{thm:symmetric-refined},~\ref{thm:unrefined-counting},~\ref{thm:asymmetric-counting} and~\ref{thm:unrefined-counting-part-2}.

\subsection*{Proof of Theorem~\ref{thm:symmetric-refined}}

	Set $\eps(n) = \left ( \frac{\sqrt{m}\log n}{s} \right )^{1/3}$. Since $m \ll \frac{s^2}{(\log n)^2}$, it holds that $0<\epsilon<1/4$.
	Recall that $\F_Y(m,s)$ denotes the family of sets $A \se Y$ such that $\lvert{A}\rvert=s$ and $|A+A| \le m$.
	As 
	\[ s = \eps^{-3}\sqrt{m}\log n \ge \lceil (1+\eps^{-2})\sqrt{m}\, \rceil,\]
	one can apply Theorem~\ref{thm:simple-containers} with $A=B$ to obtain a collection $\C$ of size at most $n^{2^{7}\eps^{-2}\sqrt{m}} = e^{O(\eps s)}$ (by choice of $\eps$) such that for every $A \in \F_Y(m,s)$ there exists $(C_0,C_1,C_2) \in \C$ such that $A \se C_1 \cap C_2$ and $|C_1|+|C_2| \le (1+2\eps)(m+\beta)$, where $\beta = \beta_G((1+4\eps)m)$.
	As $|C_1 \cap C_2| \le (|C_1|+|C_2|)/2$, it follows from these properties that
	\begin{align*}
		| \F_Y(m,s)| & \le \sum \limits_{(C_0,C_1,C_2) \in \C} \binom{|C_1 \cap C_2|}{s} \le e^{O(\eps s)} \cdot \mychoose{(1+2\eps)(m + \beta)/2}{s}.
	\end{align*}
	Since $s \le (m+\beta)/2$ and $8/s < 2\eps < 1/2$, it follows from the second part of Lemma~\ref{lemma:binomial-approx} that
	\begin{align*}
		\mychoose{(1+2\eps)(m + \beta)/2}{s} \le e^{8s\sqrt{2\eps}} \binom{\frac{m+\beta}{2}}{s}.
	\end{align*}
	As $m \ll \frac{s^2}{(\log n)^2}$, we have $\eps \ll 1$, which concludes the proof.\qed

\subsection*{Proof of Theorem~\ref{thm:unrefined-counting}}
	Let $0<\eps(n)<1/4$ to be chosen later and set $\alpha(n) = \left ( \frac{\sqrt{m}}{\log n} \right )^{1/2}$.
	Let $\F_1$ be the collection of sets $A$ in $\F_Y(m)$ with $|A| \ge \big \lceil (1+\alpha)\sqrt{m}\big \rceil$ and let $\F_2 = \F_Y(m) \setminus \F_1$.
	By Lemma~\ref{lemma:first-container} applied with $s = \lambda = \big \lceil (1+\alpha)\sqrt{m}\big \rceil$ and $\delta = \alpha^{-1}$, we have
	\begin{align*}
		|\F_1| \le \binom{n}{\lambda}^2 \cdot 2^{(1+\alpha^{-1})m} \le
		2^{(2+3\alpha)\sqrt{m}\log n + m -1}.
	\end{align*}
	In the last inequality, we used that $\alpha^{-1}m = \alpha\sqrt{m}\log{n}$ and we bounded the binomial coefficient $\mychoose{n}{\lambda}$ by $n^{\lambda}/2$.
	Since $|\F_2| \le \binom{n}{\le \lambda} \le n^{2(1+\alpha)\sqrt{m}}$, we have
	\begin{align*}
		|\F_Y(m)| = |\F_1| + |\F_2| \le 2^{(2+3\alpha)\sqrt{m}\log n + m}.
	\end{align*}
	This gives us our bound for $m \le (\log n)^2$.

	For now on, we assume that $m > (\log n)^2$ and use Theorem~\ref{thm:simple-containers} instead. Set $\eps=\alpha^{-2/3}/4\in\left(0,1/4\right)$. Let $t = \big \lceil (1+\eps^{-2})\sqrt{m}\big \rceil$ and let $\C$ be the collection given by Theorem~\ref{thm:simple-containers}.
    We have $|\C| \le n^{2^{7}\eps^{-2}\sqrt{m}}$
    and for each $A \in \F_Y(m)$ with size at least $t$ there exists a container pair $(C_1,C_2)$ such that $A \se C_1 \cap C_2$ and $|C_1|+|C_2| \le (1+2\eps)(m+\beta)$, where $\beta=\beta_G((1+4\eps)m)=\beta_{G}\left(m+\alpha^{-2/3}m\right)$.
	In particular, $C_1\cap C_2$ has size at most $(1+2\eps)(m + \beta)/2$.

	Let $\F_3$ be the collection of sets $A$ in $\F_Y(m)$ with size at least $t$.
	By Theorem~\ref{thm:simple-containers}, it follows that 
	\begin{align}\label{eq:size-F3}
		|\F_3| \le 2^{2^7\eps^{-2}\sqrt{m}\log n + (1+2\eps)(m + \beta)/2}.
	\end{align} 
	By using that $\beta = \beta_G((1+4\eps)m) \le 2m$ and $\eps^{-2}\sqrt{m}\log n = 2^{6}\eps m$, it follows from~\eqref{eq:size-F3} that
	\begin{align*}
		|\F_3| \le 2^{\frac{m+\beta}{2} + 2^{14} \eps m}.
	\end{align*}
	Now denote by $\F_4$ the collection of sets $A$ in $\F_Y(m)$ with size at most $t$. We have $|\F_4| \le \binom{n}{\le t} \le n^{t} \le 2^{2^{7}\eps m}$, and hence
	\begin{align*}
		|\F_Y(m)| = |\F_3| + |\F_4| \le 2^{\frac{m+\beta}{2} + 2^{15} \eps m}.
	\end{align*}
	The theorem follows by noting that $\eps = \alpha^{-2/3}/4$, and hence $2^{15} \eps = 2^{13} \alpha^{-2/3}$.\qed

\subsection*{Proof of Theorem~\ref{thm:asymmetric-counting}}
    Suppose without loss of generality that $s \le s'$. Set 
	\[\eps(n) = \left ( \frac{\sqrt{m}\log n}{s} \right )^{1/3} < 1/4.\]
	Recall that $\F_Y(m,s,s')$ denotes the family of pairs $(A,B)\in \binom{Y}{s} \times \binom{Y}{s'}$ for which we have $|A+B| \le m$.
	As $s = \eps^{-3}\sqrt{m}\log n \ge \lceil (1+\eps^{-2})\sqrt{m} \, \rceil $, by Theorem~\ref{thm:simple-containers} there exists a collection $\C \se 2^Y \times 2^Y$ with the following properties:
	\begin{enumerate}
		\item [(a)] $|\C| \le n^{2^{7}\eps^{-2}\sqrt{m}} \le e^{O(\eps s)}$;
        \vspace{1mm}
		\item [(b)] $|C|+|C'|\le (1+2\eps)(m + \beta)$ for every $(C,C') \in \C$;
        \vspace{1mm}
		\item [(c)] For every $(A,B) \in \F_Y(m,s,s')$ there exists $(C,C') \in \C$ such that $A \se C$ and $B \se C'$.
	\end{enumerate}

	It follows from items (b) and (c) and from Lemma~\ref{lemma:binomial-asymptotic-max} that
	\begin{align*}
		| \F_Y(m,s,s')| &\le \sum \limits_{(C,C') \in \C} \binom{|C|}{s} \binom{|C'|}{s'} 
		\le O(m^2)\cdot |\C| \mychoose{\frac{(1+2\eps)(m + \beta)s}{s+s'}}{s} \mychoose{\frac{(1+2\eps)(m + \beta)s'}{s+s'}}{s'}.
	\end{align*}
	Since $s+s' \le m+\beta$, $8/s < 2\eps < 1/2$, $s' = \Theta(s)$ and $m \le s^2 = 2^{o(s)}$,
	it follows from the inequality above together with item (a) and the second part of Lemma~\ref{lemma:binomial-approx} that
	\begin{align*}
		| \F_Y(m,s,s')| \le e^{O(\eps s +\sqrt{\eps}s)+o(s)} \cdot \mychoose{\frac{s(m+\beta)}{s+s'}}{s} \mychoose{\frac{s'(m+\beta)}{s+s'}}{s'}.
	\end{align*}
	As $m \ll \frac{s^2}{(\log n)^2}$, we have $\eps \ll 1$, and hence this concludes the proof.\qed

\subsection*{Proof of Theorem~\ref{thm:unrefined-counting-part-2}}
\linespread{}{
	Let $\delta(n) \in \mathbb{R}_{> 0}$ and set $s = \lceil \sqrt{m}\, \rceil$ and $\lambda = \lceil (1+\delta^{-1})\sqrt{m}\, \rceil$.
	Let $\F_1$ be the collection of pairs $(A,B)$ in $\F^{\text{asym}}_Y(m)$ with $|A| \ge s$ and $|B| \ge \lambda$. Let $\C_{1}$ be the collection of containers given by Lemma~\ref{lemma:first-container} with $s$ and $\lambda$, $\C_{2}$ be the collection of containers obtained from Lemma~\ref{lemma:first-container} by swapping $s$ and $\lambda$.
	Then every pair $(A,B)\in\mathcal{F}_{1}$ is contained in a container pair $(C_1,C_2)\in\C_1\times\C_2$, hence, we have
	\begin{align}\label{eq:size-F1}
		| \F_{1}| &\le \sum \limits_{(C_1,C_2) \in \C_1\times\C_2} 2^{|C_1|+|C_2|} \le \left(\binom{n}{s} \binom{n}{\lambda}\right)^2 2^{2(1+\delta)m} \le n^{(4+2\delta^{-1})\sqrt{m}} 2^{2(1+\delta)m-2}.
	\end{align}
	Set $\F_2 = \F^{\text{asym}}_Y(m) \setminus \F_1$.
	As $|A| \le |A+B| \le m$ for every pair $(A,B) \in \F_Y^{\text{asym}}(m)$, we have 
	\begin{align}\label{eq:size-F2}
		|\F_2| \le 2\mychoose{n}{\le \lambda} \binom{n}{\le m} \le  
		n^{(1+\delta^{-1})\sqrt{m}}\binom{n}{\le m}.
	\end{align}
	The first part of the theorem follows by combining~\eqref{eq:size-F1} with~\eqref{eq:size-F2}, setting $\delta=1$ and noting that $2^{4m} \le \binom{n}{m}$ if $m \le 2^{-4}n$.

	Let $\F_3$ be the collection of pairs $(A,B)$ such that $|A|, |B| \in [s,\lambda)$. Then, we have
	\begin{align}\label{eq:bound-on-F3-final}
		|\F_{Y}^{\mathrm{asym}}(m,\ge s)| \le 2|\F_1| + |\F_3|
		\le 2|\F_1| + n^{2(1+\delta^{-1})\sqrt{m}}.
	\end{align}
	It follows from~\eqref{eq:size-F1} and~\eqref{eq:bound-on-F3-final} that
	\begin{align}\label{eq:bound-asym-m-small}
		|\F_{Y}^{\mathrm{asym}}(m,\ge s)| \le n^{(4+2\delta^{-1})\sqrt{m}} 2^{2(1+\delta)m}.
	\end{align}
	One can also use Theorem~\ref{thm:simple-containers} to given another upper bound $|\F_1|$, and hence give another upper bound on $|\F_{Y}^{\mathrm{asym}}(m,\ge s)|$.
	Note that if $0<\delta<1/16$, then by setting $\eps=\sqrt{\delta}$ we have $|\F_1| \le n^{2^7 \delta^{-1}\sqrt{m}}2^{(1+2\sqrt{\delta})(m+\beta)}$, and hence
	\begin{align}\label{eq:bound-on-F1-large}
		|\F_{Y}^{\mathrm{asym}}(m,\ge s)| \le n^{2^7 \delta^{-1}\sqrt{m}}2^{(1+2\sqrt{\delta})(m+\beta)+1} + n^{2(1+\delta^{-1})\sqrt{m}}
	\end{align}
	whenever $0<\delta<1/16$.

	Now, let $\alpha(n) = \left(\frac{\sqrt{m}}{\log{n}}\right)^{1/2}$.
	If $m \le (\log n)^2$, then we set $\delta = \alpha^{-1}$ in~\eqref{eq:bound-asym-m-small}.
	Note that $\delta \ge 1$ and $\delta m = \delta^{-1}\sqrt{m} \log n $, we obtain
	\begin{align*}
		|\F_{Y}^{\mathrm{asym}}(m,\ge s)| \le n^{(4+2\delta^{-1})\sqrt{m}} 2^{4\delta m} \le n^{(4+6\delta^{-1})\sqrt{m}} \le n^{(4+6\alpha)\sqrt{m}}.
	\end{align*}
	If $m > (\log n)^2$, then we set $\delta = \alpha^{-4/3}/16 \in \left(0,1/16\right)$ in~\eqref{eq:bound-on-F1-large}.
	Since $\delta^{-1}\sqrt{m} \log n \le 2^6\sqrt{\delta} m$, we obtain
	\begin{align*}
		|\F_{Y}^{\mathrm{asym}}(m,\ge s)| \le 2^{(1+2\sqrt{\delta})(m+\beta)+1+2^{13}\sqrt{\delta} m} + n^{4\delta^{-1}\sqrt{m}},
	\end{align*}
	where $\beta=\beta_{G}\big((1+4\sqrt{\delta})m\big)=\beta_{G}\left(m+\alpha^{-2/3}m\right)$. As $4 \delta^{-1}\sqrt{m} \log{n} \le 2^{8} \sqrt{\delta} m$, it follows that
	\begin{align*}
    |\F_{Y}^{\mathrm{asym}}(m,\ge s)| \le 2^{(1+2^{15}\sqrt{\delta})(m+\beta)} =  2^{(1+2^{13}\alpha^{-2/3})(m+\beta)} .
	\end{align*}
 This concludes our proof.\qed}

\section{The typical structure of sets with small sumset}\label{sec:typical-structure}

In order to prove our typical structure result, we shall need a stability result of Campos, Coulson, Serra, and Wötzel~\cite{campos2023typical}.
Roughly speaking, it says that if we have two sets $D_{1},D_{2}$ of size around $m/2$ and a set $W$ of size $m$, then either $D_{1}$ and $D_{2}$ are close to an arithmetic progression or there are many pairs $(u,v) \in D_{1} \times D_{2}$ such that $u+v \notin W$.

\begin{lemma}{~\cite[Corollary 3.4]{campos2023typical}}\label{lemma:stability}
	Let $s_{1}\le{s_{2}}$ be positive integers, and $0<\eps \le 2^{-8}\left(\frac{s_{1}}{s_{1}+s_{2}}\right)^{2}$. 
	If $D_{1}, D_{2}, W \se \Z$ are such that 
	$(1-\eps)|W|\le |D_{1}|+|D_{2}|$ and 
	$|D_{i}| \le \left(\frac{s_{i}}{s_{1}+s_{2}}+2\sqrt{\eps}\right)|W|$ for all $i\in \{1,2\}$, then one of the following holds:
	\begin{enumerate}
		\item [$(a)$] there are at least $\eps^{2}|D_1| |D_2|$ pairs of $(u,v) \in D_{1} \times D_{2}$ such that $u+v \notin W$, or
        \vspace{1mm}
		\item [$(b)$] there exist arithmetic progressions $P_{1}, P_{2}$ with the same common difference and length
		\[ |P_{i}| \le \left(\frac{s_{i}}{s_{1}+s_{2}}+4\sqrt{\eps}\right)|W| \]
		containing all but at most $\eps |D_{i}|$ elements of $D_{i}$ for all $i\in \{1,2\}$.
	\end{enumerate}
	\end{lemma}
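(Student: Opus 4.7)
The plan is to establish the dichotomy by assuming that $(a)$ fails and then deducing $(b)$. The overall strategy follows a classical stability paradigm: first a cleaning step to isolate large subsets $D_1' \se D_1$ and $D_2' \se D_2$ with controlled sumset behavior, then a sumset upper bound that places the pair $(D_1', D_2')$ just inside the regime of an asymmetric Freiman $3k-4$ theorem, and finally the invocation of such a theorem (in the spirit of Lev--Smeliansky~\cite{lev1995addition}) to extract the arithmetic progression structure.

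First I would perform a Markov-type cleaning. Since the number of bad pairs $(u,v)$ with $u+v \notin W$ is at most $\eps^2|D_1||D_2|$, at most $\eps|D_i|$ vertices of $D_i$ have more than $\eps|D_{3-i}|$ bad partners; removing these yields $D_i' \se D_i$ with $|D_i'| \ge (1-\eps)|D_i|$ in which each surviving vertex has few bad partners. Next I would bound $|D_1' + D_2'|$ tightly: crudely, each element of $(D_1'+D_2') \setminus W$ corresponds to at least one bad pair, giving $|D_1'+D_2'| \le |W| + \eps^2|D_1||D_2|$, which a sharper argument (likely via Plünnecke--Ruzsa estimates exploiting the low bad-degree structure) should upgrade to $|D_1'+D_2'| \le |D_1'| + |D_2'| + O(\sqrt{\eps})|W|$. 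Combined with $|D_1'| + |D_2'| \ge (1-2\eps)|W|$, this places us inside the asymmetric $3k-4$ regime.

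Finally, I would apply the asymmetric Freiman $3k-4$ theorem to $(D_1', D_2')$ to obtain arithmetic progressions $P_1, P_2$ with the same common difference and $|P_i| \le \left(\frac{s_i}{s_1+s_2} + 4\sqrt{\eps}\right)|W|$ containing $D_1', D_2'$ respectively. Since $|D_i \setminus D_i'| \le \eps|D_i|$, conclusion $(b)$ follows. The hard part will be the second step: the naive bound counts bad pairs rather than \emph{distinct} bad sums, and this bound is too weak when $\eps^2|D_1||D_2|$ exceeds $\sqrt{\eps}|W|$. I expect the resolution to involve either iterating the cleaning, invoking robust sumset inequalities, or using additive energy arguments (e.g., of Balog--Szemerédi--Gowers type) to show that the distinct bad sums are structurally constrained. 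A secondary subtlety is to match the numerical constants in the $3k-4$ regime with the precise upper bound $\eps \le 2^{-8}(s_1/(s_1+s_2))^2$ in the hypothesis, which is exactly what allows the $2\sqrt{\eps}$ slack in the size of $D_i$ to become the $4\sqrt{\eps}$ slack in the length of $P_i$.
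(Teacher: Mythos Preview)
The paper does not prove this lemma at all: it is quoted verbatim as Corollary~3.4 of Campos, Coulson, Serra, and W\"otzel~\cite{campos2023typical}, with no argument given. There is therefore nothing in the present paper to compare your proposal against.

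That said, your sketch is a plausible outline of how such a stability result is typically proved (and is indeed close in spirit to the argument in~\cite{campos2023typical}): assume~$(a)$ fails, clean by a Markov argument, control the sumset of the cleaned sets, and invoke an asymmetric $3k-4$ theorem of Lev--Smeliansky type. You have also correctly flagged the genuine obstacle, namely that the naive bound $|D_1'+D_2'| \le |W| + \eps^2|D_1||D_2|$ controls bad \emph{pairs} rather than distinct bad \emph{sums}, and this can be far too weak. Your proposed remedies (iterated cleaning, robust sumset inequalities, energy arguments) are in the right direction but remain vague; to turn the sketch into a proof you would need to make one of them precise enough to land inside the $3k-4$ hypothesis with the stated constants. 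If your goal is only to use the lemma as the present paper does, a citation suffices.
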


	Since the proof method of Theorems~\ref{thm:symmetric-typical-structure} and~\ref{thm:asymmetric-typical-structure} is exactly the same, we shall prove them together. In particular, throughout this section all logarithms are in base $e$.

\subsection*{Proof of Theorems~\ref{thm:symmetric-typical-structure} and~\ref{thm:asymmetric-typical-structure}}
	Suppose without loss of generality that $s\leq s'$. The case where $m = (1+o(1))(s+s')$ follows from a result of Lev and Smeliansky~\cite{lev1995addition}, which is an asymmetric version of Freiman's $3k-4$ theorem. Here we use the version stated in~\cite{tao2006additive}.
    \begin{lemma}{~\cite[Theorem 5.12]{tao2006additive}}
    \label{small}
    Let $A,B\subseteq\mathbb{Z}$ be finite sets such that $|A+B|\leq|A|+|B|+\min\{|A|,|B|\}-4$. Then $A$ is contained in an arithmetic progression of length at most $|A+B|-|B|+1$ and $B$ is contained in an arithmetic progression of length at most $|A+B|-|A|+1$, where both progressions have the same difference.
    \end{lemma}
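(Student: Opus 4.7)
The plan is to prove this classical Lev--Smeliansky theorem by first normalizing $A$ and $B$, then reducing to a purely structural statement about intervals, and finally establishing that structure by induction on $|A|+|B|$.

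\textbf{Normalization.} Without loss of generality assume $|A|\le|B|$. Since translating $A$ or $B$ by a fixed integer does not affect $|A+B|$ or the property of being contained in an AP of a given length and common difference, I may assume $\min A = \min B = 0$. Let $d = \gcd(A \cup B)$, which equals the common difference of the smallest AP of a fixed step containing both sets once they share the origin. Dividing $A$, $B$, and $A+B$ all by $d$ reduces to the case $d=1$. The claim then becomes: under these normalizations, $\max A \le |A+B|-|B|$ and $\max B \le |A+B|-|A|$, so that $A \subseteq [0,\max A]$ and $B \subseteq [0,\max B]$ are intervals of difference $1$ of the claimed lengths.

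\textbf{Key structural inequalities.} Set $a = \max A$, $b = \max B$. The decomposition $A+B \supseteq B \cup (a+B)$, together with the fact that $B \cap (a+B) \subseteq [a,b]$, yields $|A+B| \ge 2|B| - |B \cap [a,b]|$, and the symmetric estimate $A+B \supseteq A \cup (b+A)$ gives the analogous bound for $A$. In particular $\max(A+B) = a+b$, so $|A+B| \le a+b+1$, with equality precisely when $A = [0,a]$ and $B = [0,b]$ are full intervals. The general principle driving the proof is that any ``gap'' in $A$ or $B$ produces extra elements in $A+B$ beyond the minimum $|A|+|B|-1$, and the quantitative version of this principle must be leveraged against the hypothesis $|A+B| \le |A|+|B|+\min(|A|,|B|)-4$.

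\textbf{Induction.} Induct on $|A|+|B|$, verifying small base cases (say $\min(|A|,|B|)\le 3$) by hand. For the inductive step, remove the largest element $a$ to form $A' = A \setminus \{a\}$. The element $a+b$ is the unique maximum of $A+B$ and cannot equal $a'+b'$ for $a' \in A'$, $b' \in B$, since that would force $b' > b$; hence $|A'+B| \le |A+B|-1$. Combined with $|A'| = |A|-1$ this gives a slightly weakened asymmetric Freiman hypothesis for $(A',B)$. Apply induction to obtain a common-difference AP containing $A'$, then show that $a$ must lie at the far endpoint of this AP: if not, re-inserting $a$ creates an additional configuration that forces $|A+B|$ above the allowed threshold. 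A symmetric argument with $b = \max B$ completes the descent.

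\textbf{Main obstacle.} The principal difficulty is the asymmetry: one cannot directly reduce to Freiman's symmetric $3k-4$ theorem on $A$ alone, because Pl\"unnecke's inequality only gives $|A+A| \le |A+B|^2/|B|$, which is too weak when $|A|$ and $|B|$ differ substantially. The inductive bookkeeping must therefore track the hypothesis $|A+B|\le|A|+|B|+\min(|A|,|B|)-4$ with care: after removing one element from $A$ the slack decreases by one while $\min(|A|,|B|)$ may or may not decrease, so the argument naturally splits into cases according to whether $|A|-1 \le |B|$ still holds and according to which extremal element one removes. Handling these cases uniformly, and ensuring the reinsertion step does not create a hidden gap, is the crux of the proof.
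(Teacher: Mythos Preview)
The paper does not prove this lemma at all: it is quoted verbatim as Theorem~5.12 of Tao--Vu and used as a black box to dispose of the degenerate range $m=(1+o(1))(s+s')$. So there is no ``paper's own proof'' to compare against; any argument you give is additional to what the paper needs.

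That said, your inductive plan has a genuine gap. Assume $|A|\le|B|$. Removing $a=\max A$ gives $|A'|=|A|-1$ and $|A'+B|\le|A+B|-1$, so the doubling excess $|A'+B|-|A'|-|B|$ is at most the old excess. But the threshold in the hypothesis drops by \emph{two}, since $\min(|A'|,|B|)=|A|-1$: you now need $|A'+B|\le 2|A'|+|B|-4=2|A|+|B|-6$, whereas all you know is $|A'+B|\le 2|A|+|B|-5$. Your ``Main obstacle'' paragraph flags that the slack changes, but you never say how to recover this missing unit, and in general you cannot: one needs to show that \emph{more} than one element of $A+B$ disappears when $a$ is removed, which requires structural information you do not yet have. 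There is also a secondary issue that your normalisation $\gcd(A\cup B)=1$ is not preserved when you delete $a$, so even if the hypothesis survived, the inductive conclusion would place $A'$ and $B$ in APs of some possibly larger common difference, and you would still have to argue that $a$ fits into that progression.

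The standard proofs (Lev--Smeliansky, or the exposition in Tao--Vu) do not proceed by this kind of naive induction. They instead pass to a cyclic group $\mathbb{Z}_N$ via a rectification/Freiman-isomorphism argument and invoke a Kneser/Vosper-type result there, or they compare $A+B$ directly with unions of translates $A\cup(b+A)$ and $B\cup(a+B)$ and do a careful counting of overlaps. If you want a self-contained argument, that is the route to take.
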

	For any $(A,B) \in \binom{[n]}{s} \times \binom{[n]}{s'}$ satisfying $|A+B| \le m=(1+o(1))(s+s')$, since $s'=\Theta(s)$, it holds that $|A+B|\leq|A|+|B|+\min\{|A|,|B|\}-4$. Then, by Lemma~\ref{small} we have that there exist arithmetic progressions $P_1 \supseteq A$ and $P_2 \supseteq B$ with the same common difference and of sizes $|P_1| \leq|A+B|-|B|+1= (1 + o(1))s$ and $|P_2| \leq|A+B|-|A|+1= (1 + o(1))s'$.

    Therefore, one may assume from now on that there exists some absolute constant $\delta > 0$ such that $m \ge (1 + \delta)(s + s')$. As the same result applies when $s = s'$, for the symmetric case we may assume that $m \ge 2(1 + \delta)s$.

	For simplicity, let $\F^{\textrm{S}}$ be the family of sets $A \in \binom{[n]}{s}$ such that $|A+A| \le m$ and let $\F^{\textrm{A}}$ be the families of pairs $(A,B) \in \binom{[n]}{s} \times \binom{[n]}{s'}$ such that $|A+B| \le m$.
	First of all, recall that
	\[ |\F^{\textrm{S}}| = \Omega(1) \mychoose{m/2}{s}  \qquad \text{and} \qquad |\F^{\textrm{A}}| = \Omega(1)\mychoose{\frac{sm}{s+s'}}{s} \mychoose{\frac{s'm}{s+s'}}{s'}. \]
	Indeed, the bound on $|\F^{\textrm{A}}|$ comes from choosing $s$ elements among the first $\lfloor\frac{sm}{s+s'}\rfloor$ natural numbers and $s'$ elements among the first $\lfloor\frac{s'm}{s+s'}\rfloor$ natural numbers.
	Similarly, the bound on $|\F^{\textrm{S}}|$ comes from choosing $s$ elements among the first $\lfloor{m/2}\rfloor$ natural numbers.

	We shall use Theorem~\ref{thm:simple-containers} to show that most pairs in $\F^{\textrm{A}}$ and most elements in $\F^{\textrm{S}}$ are inside containers which are close to arithmetic progressions.
	This does not yet guarantee that the elements with bounded sumset are close to arithmetic progressions, but it is a first step towards this goal.
	Along the proof, we shall make several assumptions on which interval $\eps(n)$ must be.
	At the end, we shall show that the intersection of all these intervals is nonempty. That is, there exists an $\eps(n)$ which satisfies all these assumptions.

	Let $0<\eps(n)<1/4$ be such that
	\begin{align}\label{eq:eps-assumption-1}
		s \ge (1+\eps^{-2})\sqrt{m}
	\end{align}
 and let $\C$ be the collection given by Theorem~\ref{thm:simple-containers}.
	Then for every $(A,B) \in \F^{\textrm{A}}$ there exists $(C_{0},C_{1},C_{2}) \in \C$ such that $A \se C_1$, $B \se C_2$ and $(A+B)^c \se C_0$. 
	Similarly, for every $A \in \F^{\textrm{S}}$ there exists $(C_{0},C_{1},C_{2}) \in \C$ such that $A \se C_1 \cap C_2$ and $(A+A)^c \se C_0$. 
	Moreover,
	\begin{align}\label{eq:container-size-1}
		|\C| \le n^{2^{7}\eps^{-2}\sqrt{m}},
	\end{align}
	and the elements $(C_0,C_1,C_2)$ in $\C$ can be classified into two types:
	\begin{enumerate}
		\item [$(a)$] $|C_{1}| + |C_2| \le (1-\eps)m$;
        \vspace{1mm}
		\item [$(b)$] $|C_{1}| + |C_2| \le (1 + 2\eps)m$ and $e(\HH(C_0,C_1,C_2)) < \eps^2 |C_{1}||C_2|$.
	\end{enumerate}
	    
	Let $\C_{(a)}$ and $\C_{(b)}$ denote the families of containers of type $(a)$ and $(b)$, respectively.
	Similarly, let $\F^{\textrm{A}}_{(a)}$, $\F^{\textrm{A}}_{(b)}$ and $\F^{\textrm{S}}_{(a)}$, $\F^{\textrm{S}}_{(b)}$ be the families of pairs $(A,B) \in \F^{\textrm{A}}$ and the families of sets $A \in \F^{\textrm{S}}$ coming from containers of type $(a)$ and $(b)$, respectively.
	Then, by Lemma~\ref{lemma:binomial-asymptotic-max} we have
	\begin{align*}
		| \F^{\textrm{A}}_{(a)}| \le \sum \limits_{(C_0,C_1,C_2) \in \C_{(a)}} \binom{|C_1|}{s} \binom{|C_2|}{s'}  \le 4m^2|\C_{(a)}| \cdot \mychoose{\frac{(1-\eps)sm}{s+s'}}{s} \mychoose{\frac{(1-\eps)s'm}{s+s'}}{s'}.
	\end{align*}
 
	Assuming that 
	\begin{align}\label{eq:eps-assumption-4}
		  \log n \ll \eps s,
	\end{align}
    it follows from the first part of Lemma~\ref{lemma:binomial-approx} and \eqref{eq:container-size-1} that
 \begin{align}\label{eq:bound-F-A}
     | \F^{\textrm{A}}_{(a)}| \le e^{2^7 \eps^{-2}\sqrt{m}\log n - \eps (s+s')+o(\eps s)} \cdot \mychoose{\frac{sm}{s+s'}}{s} \mychoose{\frac{s'm}{s+s'}}{s'}.
 \end{align}
	Similarly, as $|C_1|+|C_2| \le (1-\eps)m$ implies that $|C_1 \cap C_2| \le (1-\eps)m/2$, we obtain
	\begin{align}\label{eq:bound-F-S}
		| \F^{\textrm{S}}_{(a)}| &\le |\C_{(a)}| \mychoose{\frac{(1-\eps)m}{2}}{s}  \le e^{2^7 \eps^{-2}\sqrt{m}\log n - \eps s} \cdot \mychoose{m/2}{s}.
	\end{align}
	In the last inequality, we used~\eqref{eq:container-size-1} and the first part of Lemma~\ref{lemma:binomial-approx}.

	Now, by taking $\eps$ such that 
	\begin{align}\label{eq:eps-assumption-2}
		2^{10}\eps^{-2} \sqrt{m} \log n \le 2\eps s \le \eps(s+s'),
	\end{align}
	it follows from~\eqref{eq:bound-F-A} that
	\begin{align}\label{eq:size-F-a}
		| \F^{\textrm{A}}_{(a)}| \le e^{-\eps (s+s')/2}\mychoose{\frac{sm}{s+s'}}{s} \mychoose{\frac{s'm}{s+s'}}{s'} \le e^{-\eps (s+s')/3}| \F^{\textrm{A}}|.
	\end{align}
	Similarly, it follows from~\eqref{eq:bound-F-S} that
	\begin{align}\label{eq:size-F-S-a}
		| \F^{\textrm{S}}_{(a)}| \le e^{-\eps s/2}\mychoose{m/2}{s}  \le e^{-\eps s/3}| \F^{\textrm{S}}|.
	\end{align}
	We shall break the containers of type $(b)$ into the following other types:
	\begin{enumerate}
		\item [$(b1)$] $ |C_{i}| > \left(\frac{1}{2}+2\sqrt{\eps}\right)m$ for some $i \in \{1,2\}$;
		\vspace{1mm}
        \item [$(b2)$] $|C_{i}| \le \left(\frac{1}{2}+2\sqrt{\eps}\right)m$ for all $i \in \{1,2\}$ and $e(\HH) < \eps^2 |C_{1}||C_2|$, where $\HH = \HH(C_0,C_1,C_2)$;
		\vspace{1mm}
        \item [$(b1')$] $|C_{1}| > \left(\frac{s}{s+s'}+2\sqrt{\eps}\right)m$ or $|C_{2}| > \left(\frac{s'}{s+s'}+2\sqrt{\eps}\right)m$;
		\vspace{1mm}
        \item [$(b2')$] $|C_{1}| \le \left(\frac{s}{s+s'}+2\sqrt{\eps}\right)m$, $|C_{2}| \le \left(\frac{s'}{s+s'}+2\sqrt{\eps}\right)m$
		 and $e(\HH) < \eps^2 |C_{1}||C_2|$, where $\HH = \HH(C_0,C_1,C_2)$.
	\end{enumerate}
	Similarly as before, we let $\C_{(b1)}$, $\C_{(b2)}$,  $\C_{(b1')}$ and $\C_{(b2')}$ be the families of containers in $\C_{(b)}$ which are of type $(b1)$, $(b2)$, $(b1')$ and $(b2')$, respectively.
	Observe that $\C_{(b1)} \cup \C_{(b2)} = \C_{(b)} = \C_{(b1')} \cup \C_{(b2')}$.
	Now, we let $\F_{(b1)}^{\textrm{S}}$ and $\F_{(b2)}^{\textrm{S}}$ be the families of sets $A \in \F_{(b)}^{\textrm{S}}$ coming from containers of type $(b1)$ and $(b2)$, respectively.
	Similarly, we let $\F_{(b1')}^{\textrm{A}}$ and $\F_{(b2')}^{\textrm{A}}$ be the families of pairs $(A,B) \in \F_{(b)}^{\textrm{A}}$ which come from containers of type $(b1')$ and $(b2')$, respectively.

	By taking 
	\begin{align}\label{eq:eps-assumption-3}
		\dfrac{2^{10} s^2}{m^2(s+s')^2} \le \eps \le \dfrac{\delta^2 s^2}{2^{10}(s+s')^2},
	\end{align}
        and using that $s \gg 1$ and that $2\sqrt{\eps}s\ge1$ (implied by~\eqref{eq:eps-assumption-4}),
	it follows from Lemma~\ref{lemma:binomial-approx-4} that
	\begin{align}\label{eq:bound-F-B-1-prime}
		|\F_{(b1')}^{\textrm{A}}| \le  \sum \limits_{(C_0,C_1,C_2) \in \C_{(b1')}} \binom{|C_1|}{s} \binom{|C_2|}{s'} \le 
		|\C_{(b1)}| \cdot e^{-\eps(s+s')}\binom{\frac{sm}{s+s'}}{s} \binom{\frac{s'm}{s+s'}}{s'}.
	\end{align}
	By using the bound in~\eqref{eq:container-size-1} on the number of containers and the bound on $\eps$ in~\eqref{eq:eps-assumption-2}, it follows from~\eqref{eq:bound-F-B-1-prime} that 
	\begin{align}\label{eq:size-F-b-1}
		| \F_{(b1')}^{\textrm{A}}| \le e^{-\eps (s+s')/2}\mychoose{\frac{sm}{s+s'}}{s} \mychoose{\frac{s'm}{s+s'}}{s'} \le e^{-\eps (s+s')/3} |\F^{\textrm{A}}|.
	\end{align}

	Similarly, if $(C_0,C_1,C_2) \in \C_{(b1)}$, then as $\C_{(b1)} \se \C_{(b)}$ we have 
	\begin{align}\label{eq:bound-C-i-sym}
	    |C_{i}| \le \left(\frac{1}{2}+2\eps - 2\sqrt{\eps}\right)m \le \frac{(1-\eps)m}{2} 
	\end{align}
	for some $i \in \{1,2\}$ (as long as $\eps < 16/25$).
	By using~\eqref{eq:container-size-1},~\eqref{eq:eps-assumption-2},~\eqref{eq:bound-C-i-sym} and the first part of Lemma~\ref{lemma:binomial-approx}, we obtain
	\begin{align}\label{eq:size-F-S-b-1}
		|\F_{(b1)}^{\textrm{S}}| \le  |\C_{(b1)}|\mychoose{\frac{(1-\eps)m}{2}}{s} \le  e^{2^7 \eps^{-2}\sqrt{m}\log n - \eps s} \cdot \mychoose{m/2}{s} \le e^{-\eps s/2}\mychoose{m/2}{s}  \le e^{-\eps s/3}| \F^{\textrm{S}}|.
	\end{align}

	Since we are assuming that $\eps s \gg \log n$ (see~\eqref{eq:eps-assumption-4}),   it follows from~\eqref{eq:size-F-a} and~\eqref{eq:size-F-b-1} that if we select a pair $(A,B) \in \F^{\textrm{A}}$ uniformly at random, then with high probability it will come from a container of type $(b2')$.
	Similarly, it follows from~\eqref{eq:size-F-S-a} and~\eqref{eq:size-F-S-b-1} that if we select a set $A \in \F^{\textrm{S}}$ uniformly at random, then with high probability it will come from a container of type $(b2)$.

	Note that the bound \eqref{eq:eps-assumption-3} on $\eps$ implies that $\eps<2^{-8}\left(\frac{s}{s+s'}\right)^{2}$.
    By Lemma~\ref{lemma:stability}, we have that for each $(C_0,C_1,C_2) \in \C_{(b2)} \cup \C_{(b2')}$ there exist arithmetic progressions $P_{1}$ and $P_{2}$ with the same common difference
	and sets $T_i \se C_i$ such that $|T_i| \le \eps |C_i|$ and $C_{i} \se P_i \cup T_i$ for all $i\in \{1,2\}$.
	We refer to $(T_1,T_2)$ as the pair of exceptional sets associated with $(C_1,C_2)$.

	Let $A \in \F_{(b2)}^{\textrm{S}}$, let $(C_0,C_1,C_2)$ be its associated container triple and let $(T_1,T_2)$ be the pair of exceptional sets associated with $(C_1,C_2)$.
	We say that $A$ is \emph{bad} if $|A \cap T_1| \ge 2^8 \eps s$ and $|A \cap T_2| \ge 2^8 \eps s$, and we set $\F_{\text{bad}}^{\textrm{S}}$ to be the collection of all bad sets in $\F_{(b2)}^{\textrm{S}}$.
	Similarly, let $(A,B) \in \F_{(b2)}^{\textrm{A}}$, let $(C_0,C_1,C_2)$ be its associated container triple and let $(T_1, T_2)$ be the pair of exceptional sets associated with $(C_1,C_2)$.
	We say that $A$ is \emph{bad} if $|A \cap T_1| \ge 2^8\eps(s+s')$ and we say that $B$ is \emph{bad} if $|B \cap T_2| \ge 2^8\eps(s+s')$ and we set $\F_{\text{bad}}^{\textrm{A},1}$ and $\F_{\text{bad}}^{\textrm{A},2}$ to be the collections of pairs $(A,B)$ in $\F_{(b2)}^{\textrm{A}}$ such that $A$ is bad and $B$ is bad, respectively.

	As $s = \Theta(s')$, to conclude the proof it suffices to show that $|\F_{\text{bad}}^{\textrm{S}}|= o(|\F^{\textrm{S}}|)$, that $|\F_{\text{bad}}^{\textrm{A},1}| + |\F_{\text{bad}}^{\textrm{A},2}| = o(|\F^{\textrm{A}}|)$ and that there exists an $\eps \ll 1$ satisfying all the assumptions made in the proof.

	Let us first bound $|\F_{\text{bad}}^{\textrm{S}}|$. Observe that 
	\begin{align}\label{eq:bound-T-1-T-2}
		\min\{|T_1|, |T_2|\} \le \eps m
	\end{align}
	for every pair of exceptional sets $(T_1,T_2)$.
	Indeed, given that $0<\eps<1/4$ (which is implied by~\eqref{eq:eps-assumption-3}, for example), we have $|C_1|+|C_2| \le 2m$ for every $(C_0,C_1,C_2) \in \C_{(b2)} \cup \C_{(b2')}$, and hence $\min\{|T_1|,|T_2|\} \le \frac{|T_1|+|T_2|}{2}\le \frac{\eps |C_1| + \eps |C_2|}{2} \le \eps m$. 
 
    It follows from~\eqref{eq:bound-T-1-T-2} that if $m < 2^8s$, then $\F_{\text{bad}}^{\textrm{S}} = \emptyset$. Thus, we may assume that $m \ge 2^8s$. For every set $A \in \F_{(b2)}^{\textrm{S}}$ there exists a container triple $(C_0,C_1,C_2)$ in $\C_{(b2)}$ such that $A \se C_1 \cap C_2$.
	As $|C_1 \cap C_2| \le (|C_1|+|C_2|)/2 \le (1/2+\eps)m$ and $\min\{|T_1|, |T_2|\} \le \eps m$, we obtain
	\begin{align}\label{eq:bound-F-bad-S}
		|\F_{\text{bad}}^{\textrm{S}}| 
		&\le \sum \limits_{(C_0,C_1,C_2) \in \C_{(b2)}} \sum \limits_{i = \lceil 2^8\eps s \rceil}^{s} \binom{|C_1 \cap C_2|}{s-i}\binom{\min\{\lvert{T_{1}}\rvert, \lvert{T_{2}}\rvert\}}{i} \nonumber\\
		& \le  |\C_{(b2)}| \sum \limits_{i = \lceil 2^8\eps s \rceil}^{s} \mychoose{\frac{(1+2\eps)m}{2}}{s-i}\binom{\eps m}{i}.
	\end{align}
	Since $s \le \frac{3}{4}\cdot\frac{(1+2\eps)m}{2}$ and $\eps s \gg \log n$ (see~\eqref{eq:eps-assumption-4}) one can use Lemma~\ref{lemma:binomial-bad-sets} with $\delta = 2\eps$ and $t = (1/2+\eps)m$ to bound the sum in~\eqref{eq:bound-F-bad-S}.
	By using this, the bound on the number of containers in~\eqref{eq:container-size-1} and the last part of Lemma~\ref{lemma:binomial-approx} with $\delta = 1$, 
    we obtain
	\begin{align*}
		|\F_{\text{bad}}^{\textrm{S}}|
		\le e^{2^7 \eps^{-2}\sqrt{m}\log n-2^8\eps s} \mychoose{\frac{(1+2\eps)m}{2}}{s} \le e^{2^7 \eps^{-2}\sqrt{m}\log n-2^7\eps s} \mychoose{m/2}{s}.
	\end{align*}
	By using the bound on $\eps$ coming from the assumption~\eqref{eq:eps-assumption-2}, we obtain that $|\F_{\text{bad}}^{\textrm{S}}| \le e^{-\eps s}|\F^{\textrm{S}}|$.

	Now let us bound $\F_{\text{bad}}^{\textrm{A},1}$. The bound on $\F_{\text{bad}}^{\textrm{A},2}$ is completely analogous.
	Let $(A,B) \in \F_{\text{bad}}^{\textrm{A},1} \se \F_{(b2)}^{\textrm{A}}$ be a pair, let $(C_0,C_1,C_2)\in \C_{(b2)}$ be its associated container triple and let $(T_1,T_2)$ be the pair of exceptional sets associated with it.
	As $|T_1| \le \eps |C_1|$,
	we must have $|C_1| \ge 2^8(s+s')$, otherwise $(A,B)$ would not be in $\F_{\text{bad}}^{\textrm{A},1}$.
	Set $\D$ to be the collection of all pairs $(C_1, C_2)$ such that $(C_0,C_1,C_2) \in \C_{(b2')}$ and $|C_1| \ge 2^8(s+s')$.
	Then, we have
	\begin{align*}\label{eq:size-F-bad}
		|\F_{\text{bad}}^{\textrm{A},1}| \le \sum \limits_{(C_1,C_2) \in \D} \sum \limits_{i = \lceil 2^8\eps (s + s') \rceil }^{s} \binom{|C_1|}{s-i}\binom{|T_1|}{i} \binom{|C_2|}{s'}.
	\end{align*}

	As $s \le s+s' \le 3|C_1|/4$ for every $(C_1,C_2) \in \D$, $|T_1| \le \eps |C_1|$ and  $\log n \ll \eps s$ (see~\eqref{eq:eps-assumption-4}), by applying Lemma~\ref{lemma:binomial-bad-sets} with $t = |C_1|$, $\Gamma = 2^8(s+s')/s$ and $\delta = \eps$, we obtain
	\begin{align*}
		|\F_{\text{bad}}^{\textrm{A},1}| \le e^{-2^8\eps (s+s')}\sum \limits_{(C_1,C_2) \in \D} \mychoose{|C_1|}{s} \mychoose{|C_2|}{s'}.
	\end{align*}
	As $|C_1|+|C_2| \le (1+2\eps)m$ for every $(C_1,C_2) \in \D$, it follows from Lemma~\ref{lemma:binomial-asymptotic-max} combined with the bound on the number of containers given in \eqref{eq:container-size-1} and the assumption $\log n \ll \eps s$ that
	\begin{align*}
		|\F_{\text{bad}}^{\textrm{A},1}| 
		\le e^{2^7 \eps^{-2}\sqrt{m}\log n-2^8\eps (s + s')+o(\eps s)} \mychoose{\frac{(1+2\eps)sm}{s+s'}}{s} \mychoose{\frac{(1+2\eps)s'm}{s+s'}}{s'}.
	\end{align*}
	By using the last part of Lemma~\ref{lemma:binomial-approx} with $\delta =1$ and~\eqref{eq:eps-assumption-2}, we obtain
    \begin{align*}
        |\F_{\text{bad}}^{\textrm{A},1}| \le e^{2^7 \eps^{-2}\sqrt{m}\log n-2^7\eps (s + s')} \mychoose{\frac{sm}{s+s'}}{s} \mychoose{\frac{s'm}{s+s'}}{s'}
		 \le e^{-2^6\eps (s + s')} \mychoose{\frac{sm}{s+s'}}{s} \mychoose{\frac{s'm}{s+s'}}{s'} \le e^{-\eps (s + s')} |\F^{\textrm{A}}|.
    \end{align*}

	Now, we just need to show that there is an $\eps \ll 1$ which satisfies all the assumptions made in the proof, namely, \eqref{eq:eps-assumption-1}, \eqref{eq:eps-assumption-4}, \eqref{eq:eps-assumption-2} and \eqref{eq:eps-assumption-3}.
	Since $s = \Theta(s')$, it suffices to have
	\begin{align}
		\max \left \{\left(\dfrac{\sqrt{m}}{s}\right)^{1/2}, \dfrac{\log n}{s}, \left(\dfrac{\sqrt{m}\log n}{s}\right)^{1/3}, \dfrac{s^2}{m^2(s+s')^2} \right \} \ll \eps \ll 1.
	\end{align}
	Such $\eps$ exists if $s = \Theta(s')$ and $1 \le m \ll \frac{s^2}{(\log n)^{2}}$.\qed

\section*{Acknowledgments}
We would like to thank Marcelo Campos for helpful discussions. We also thank the referee for their careful reading and valuable comments which greatly improved the presentation of this paper.

\newpage
\appendix

\section{}\label{appendix:lower-bound}

\begin{lemma} If $6s \le 0.99m$ and $\frac{m+1}{2}\le 0.99n$, then
    \begin{align*}
        |\F_{[n]}(m,s)| = \Omega\left( \frac{n^2s}{m^2} \right) \mychoose{ \lfloor\frac{m+1}{2}\rfloor }{s}.
    \end{align*}
\end{lemma}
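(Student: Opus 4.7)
The plan is to produce a large family of $s$-subsets of $[n]$ via an injective parameterization, then estimate the number of parameter triples. Set $L = \lfloor (m+1)/2 \rfloor$ so that any $s$-subset of an arithmetic progression of length $L$ inside $[n]$ has sumset of size at most $2L - 1 \le m$ and thus belongs to $\F_{[n]}(m, s)$. I consider the set
\[
X = \left\{(a, d, A_0) : a, d \ge 1,\ a + (L-1)d \le n,\ A_0 \se \{0, 1, \ldots, L-1\},\ |A_0| = s,\ 0 \in A_0,\ \gcd(A_0 \setminus \{0\}) = 1\right\}
\]
together with the map $\phi \colon X \to \binom{[n]}{s}$ defined by $\phi(a, d, A_0) = \{a + dk : k \in A_0\}$. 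Each $A = \phi(a, d, A_0)$ is an $s$-subset of the AP $\{a, a+d, \ldots, a + (L-1)d\} \se [n]$, so $\phi(X) \se \F_{[n]}(m,s)$.

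The crucial point is that the $\gcd$ condition makes $\phi$ injective. Writing $A = \{a_1 < \cdots < a_s\} = \phi(a, d, A_0)$, one recovers $a = a_1 = \min A$ from $0 \in A_0$, and then
\[\gcd\{a_i - a_1 : 2 \le i \le s\} = d \cdot \gcd(A_0 \setminus \{0\}) = d,\]
so $d$ is determined by $A$, and finally $A_0 = (A - a)/d$ is determined. Hence $|\F_{[n]}(m,s)| \ge |X|$.

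Assume now $s \ge 3$. The number of valid $(a, d)$ equals $\sum_{d=1}^{\lfloor (n-1)/(L-1)\rfloor} (n - (L-1)d) = \Omega(n^2/L)$, using the hypothesis $L \le 0.99n$ (split the sum at $d \le D/2$, where each summand is at least $n/2$). The number of valid $A_0$ is, by Möbius inversion,
\[\sum_{k \ge 1} \mu(k) \binom{\lfloor (L-1)/k \rfloor}{s-1} \ge \binom{L-1}{s-1}\left(1 - \sum_{k \ge 2} k^{-(s-1)}\right) = (2 - \zeta(s-1))\binom{L-1}{s-1} \ge \tfrac{1}{3}\binom{L-1}{s-1},\]
where the elementary inequality $\binom{\lfloor N/k \rfloor}{s-1} \le \binom{N}{s-1}/k^{s-1}$ follows from $(N/k - j)/(N - j) \le 1/k$ for all $N \ge j \ge 0$, and $\zeta(s-1) \le \zeta(2) = \pi^2/6 < 5/3$ since $s \ge 3$. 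Multiplying the two counts and using $\binom{L-1}{s-1} = (s/L)\binom{L}{s}$ gives $|X| = \Omega(n^2 s/L^2)\binom{L}{s} = \Omega(n^2 s/m^2)\binom{L}{s}$, as required.

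The case $s = 2$ is trivial: $6s \le 0.99m$ forces $m \ge 13$, so every $2$-subset of $[n]$ has sumset of size $3 \le m$ and lies in $\F_{[n]}(m, 2)$; hence $|\F_{[n]}(m,2)| = \binom{n}{2} = \Omega(n^2)$, which matches $\Omega(n^2/m^2)\binom{L}{2} = O(n^2)$. The main obstacle is guaranteeing the parameterization is injective, which the $\gcd$ restriction accomplishes; the Möbius bound then certifies that this restriction retains a constant fraction of the template sets $A_0$, which is what yields the factor $s$ in the final bound (via $\binom{L-1}{s-1} = (s/L)\binom{L}{s}$) rather than the weaker $\Omega(n^2/m^2)\binom{L}{s}$ that a cruder approach would give.
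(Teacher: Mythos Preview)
Your proof is correct and follows the same overarching strategy as the paper's: build sets in $\F_{[n]}(m,s)$ as affine images $t + d\cdot A_0$ of template sets $A_0$ containing $0$ with $\gcd(A_0\setminus\{0\})=1$, and use the gcd condition to guarantee injectivity of the parameterization. The executions differ, however. The paper additionally fixes $\max A_0 = a$, estimates the number of such templates via the density $6/\pi^2$ of coprime pairs (which needs $s\ge 4$), and then sums over $a$ using the hockey-stick identity. You instead let $A_0$ range over all of $\{0,\dots,L-1\}$ without pinning the maximum, and count gcd-$1$ templates directly by M\"obius inversion with the clean bound $2-\zeta(s-1)\ge 1/3$ for $s\ge 3$. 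Your route is a bit more direct: it avoids the sum over diameters and the hockey-stick step, and it covers $s\ge 2$ (with the separate $s=2$ case) rather than only $s\ge 4$. One small point: your justification that $\sum_{d=1}^{D}(n-(L-1)d)=\Omega(n^2/L)$ via ``split at $d\le D/2$'' does not literally cover $D=1$ (which occurs when $L>(n+1)/2$); there the single term $n-L+1\ge 0.01n$ still gives $\Omega(n^2/L)$ since $n^2/L<2n$ in that range, so the claim stands but deserves a sentence.
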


\begin{proof}
    For each $a \in \big \{\frac{3s}{2},\ldots, \lfloor \frac{m+1}{2} \rfloor \big \}$, let $\mathcal{T}_{a}$ denote the collection of $s$-element subsets $A$ satisfying $\min A = 0$, $\max A = a$, and $\text{gcd}(A)=1$.
    Notice that, aside from the minimum and maximum, the
    elements of every set $A \in \mathcal{T}_{a}$ are contained in $[a-1]$.
    Hence, an immediate upper bound is $|\mathcal{T}_{a}| \le \binom{a-1}{s-2}$.
    To establish a corresponding lower bound, we proceed as follows.
    First, choose two numbers $x,y$ from $[a-1]$ that are coprime.
    There are $\big ( \frac{6}{\pi^2} + o(1) \big ) \binom{a-1}{2}$ such pairs
    (see, for example, Theorem 332 in~\cite{Hardy-Wright}).
    Next, select the remaining $s-4$ elements from $[a-1]\setminus \{x,y\}$ to complete the $s$-set.
    Since every $s$-set is counted $\binom{s-2}{2}$ times by this procedure, we obtain
    \begin{align}\label{eq:size-of-T}
        |\mathcal{T}_{a}| \ge \dfrac{\big ( \frac{6}{\pi^2} + o(1) \big ) \binom{a-1}{2} \binom{a-3}{s-4}}{\binom{s-2}{2}} = \Omega(1) \binom{a-1}{s-2}.
    \end{align}
    This shows that, up to constant factors, the size of $\mathcal{T}_{a}$ is $\binom{a-1}{s-2}$.

    For any $t,d \in \mathbb{N}_{\ge 0}$ and $S \se \mathbb{N}_{\ge 0}$, we write $t+d \cdot S = \{t+dk: k \in S\}$.
    Note that if $A$ is a finite set containing $0$ and another nonzero element, then distinct pairs $(t_1,d_1)$ and $(t_2,d_2)$ yield distinct sets $t_1 + d_1 A$ and $t_2 + d_2 A$.
    For $a,n \in \mathbb{N}_{\ge1}$, define 
    \begin{align*}
        \mathcal{D}_{a,n} \coloneqq \Big \{ t+ d \cdot A \se [n]: A \in \mathcal{T}_a, \, t,d \in \mathbb{N}_{\ge 1}\Big \}.
    \end{align*}
    It follows that the size of $\mathcal{D}_{a,n}$ equals to 
    \begin{align}\label{eq:size-of-D}
        |\mathcal{D}_{a,n}| & = |\mathcal{T}_a| \cdot \# \{(t,d): t,d\ge 1 \text{ and }  t+da \in [n]\} \nonumber \\
        & = |\mathcal{T}_a| \cdot \sum_{t=1}^n \left \lfloor \dfrac{n-t}{a} \right \rfloor \ge |\mathcal{T}_a| \cdot \sum_{t=1}^{n-a} \dfrac{n-t}{2a}.
    \end{align}
    When $ 1 \le a \le 0.99n$, using the lower bound from~\eqref{eq:size-of-T} for $|\mathcal{T}_a|$ and evaluating the sum in~\eqref{eq:size-of-D}, we obtain 
    \begin{align}\label{eq:bound-on-D-2}
        |\mathcal{D}_{a,n}| \ge \dfrac{ |\mathcal{T}_a|}{2a} \left ( \binom{n}{2} - \binom{a}{2} \right) = \Omega (1) \dfrac{n^2}{a} \binom{a-1}{s-2} = \Omega (1) \dfrac{n^2}{s} \binom{a-2}{s-3}.
    \end{align}

    Now we claim that for distinct $a, a' \in \mathbb{N}_{\ge 1}$ and any $n \in \mathbb{N}_{\ge 1}$, we have $\mathcal{D}_{a,n} \cap \mathcal{D}_{a',n} = \emptyset$.
    Assume, for the sake of contradiction, that there exists an element $X \in \mathcal{D}_{a,n} \cap \mathcal{D}_{a',n}$.
    By definition of $\mathcal{D}_{a,n}$, there exist integers $t_1, d_1 \ge 1$ and a set $A \in \mathcal{T}_a$ such that 
    $X = t_1 + d_1 \cdot A.$
    Similarly, since $X \in \mathcal{D}_{a',n}$, there exist $t_2, d_2 \ge 1$ and a set $A' \in \mathcal{T}_{a'}$ with 
    $X = t_2 + d_2 \cdot A'.$
    Because $0 \in A \cap A'$, it follows immediately from
$t_1 + d_1 \cdot 0 = t_2 + d_2 \cdot 0,$
that $t_1 = t_2$. Hence, we deduce
$d_1 \cdot A = d_2 \cdot A'.$
Since each \(A \in \mathcal{T}_a\) and \(A' \in \mathcal{T}_{a'}\) satisfies \(\gcd(A) = \gcd(A') = 1\), we have
\[
\gcd(d_1 \cdot A) = d_1 \quad \text{and} \quad \gcd(d_2 \cdot A') = d_2.
\]
Thus, it must be that $d_1 = d_2$, and hence $A = A'$, which forces $a = a'$, contradicting our assumption that $a$ and $a'$ are distinct.

Every set $A \in \mathcal{D}_{a,n}$ satisfies $|A+A| \le 2a-1$, so in particular, for every $a \in \left[\lfloor \frac{m+1}{2} \rfloor\right]$ we have 
$$
\mathcal{D}_{a,n} \subseteq \mathcal{F}_{[n]}(m,s).
$$
Since the families $\mathcal{D}_{a,n}$ are pairwise disjoint for distinct $a$, we obtain from~\eqref{eq:bound-on-D-2} that if $\frac{m+1}{2} \le 0.99n$ then
\begin{align*}
        |\mathcal{F}_{[n]}(m,s)| \ge \sum_{a=\lceil 3s/2 \rceil }^{\lfloor \frac{m+1}{2} \rfloor}|\mathcal{D}_{a,n}| = \Omega \left ( \dfrac{n^2}{s} \right ) \sum \limits_{a=\lceil 3s/2 \rceil }^{\lfloor \frac{m+1}{2} \rfloor}\binom{a-2}{s-3} = \Omega \left ( \dfrac{n^2}{s} \right ) \mychoose{\lfloor \frac{m+1}{2} \rfloor -1}{s-2}.
    \end{align*}
    In the last inequality, we used that $0.99m \ge 6s$ and the Hockey-stick identity.
    The lemma follows by combining the identity $\frac{a}{b}\binom{a-1}{b-1}=\binom{a}{b}$ with the previous inequality.
\end{proof}

\section{}
\begin{lemma}\label{lemma:binomial-asymptotic-max}
	Let $1 \le s_1 \le s_2 \le m$ be integers with $s_1+s_2 \le m$. Then, we have
	\[ \max_{x \in \{0,\ldots,m\}} \binom{x}{s_1} \binom{m-x}{s_2} \le 4m^2 \binom{\frac{s_1m}{s_1+s_2}}{s_1}\binom{\frac{s_2m}{s_1+s_2}}{s_2}.\]
	Moreover, $\binom{x}{s_1} \binom{m-x}{s_2}$ is non-decreasing in the interval $\big\{0,\ldots,\big\lfloor \frac{s_1 m+s_1}{s_1+s_2} \big\rfloor\big\}$ and non-increasing in the interval $\big\{\big\lfloor \frac{s_1 m+s_1}{s_1+s_2} \big\rfloor,\ldots,m\big\}$.
\end{lemma}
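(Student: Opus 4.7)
The plan is to study the discrete ratio $f(x)/f(x-1)$ where $f(x) = \binom{x}{s_1}\binom{m-x}{s_2}$. For integers $x$ with $s_1 \le x \le m-s_2$, both factors are positive, and a direct cancellation gives
$$\frac{f(x)}{f(x-1)} = \frac{x}{x-s_1} \cdot \frac{m-x-s_2+1}{m-x+1}.$$
Rearranging, this ratio is $\ge 1$ precisely when $x(s_1+s_2) \le s_1(m+1)$, i.e., when $x \le x^{\ast} := \lfloor s_1(m+1)/(s_1+s_2) \rfloor$. Since $f$ vanishes outside $[s_1, m-s_2]$ and since the hypothesis $m \ge s_1+s_2$ places $x^{\ast}$ inside $[s_1, m-s_2]$ (indeed $s_1(m+1)/(s_1+s_2) \ge s_1$, and one can check $x^{\ast} \le m - s_2$ using $s_1 \le s_2$ together with $m \ge s_1+s_2$), this immediately yields the monotonicity claim: $f$ is non-decreasing on $\{0,\ldots,x^{\ast}\}$ and non-increasing on $\{x^{\ast},\ldots,m\}$.

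In particular, the maximum is attained at $x^{\ast}$. To bound $f(x^{\ast})$, set $a = s_1 m/(s_1+s_2)$ and $b = s_2 m/(s_1+s_2) = m-a$; note $a \ge s_1$ and $b \ge s_2$. From $x^{\ast} < a + 1$ and $x^{\ast} \ge \lfloor a \rfloor \ge a - 1$ (whence $m - x^{\ast} \le b + 1$), combined with the fact that $\binom{y}{s}$ is an increasing polynomial in real $y$ on $[s,\infty)$, we obtain
$$f(x^{\ast}) \le \binom{a+1}{s_1}\binom{b+1}{s_2}.$$
A telescoping computation gives
$$\frac{\binom{a+1}{s_1}}{\binom{a}{s_1}} = \frac{a+1}{a-s_1+1} \le m+1 \le 2m,$$
using $a \le m$ and $a - s_1 + 1 \ge 1$; the analogous inequality holds for the $b$-factor. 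Multiplying the two inequalities yields the claimed bound $4m^2 \binom{a}{s_1}\binom{b}{s_2}$.

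There is no substantial obstacle here: the argument is essentially the bookkeeping of a single log-concavity-style ratio together with two ``shift-by-one'' estimates. The only points requiring care are the boundary behaviour (the ratio formula is only valid where both binomial coefficients are nonzero) and the verification that $x^{\ast} \in [s_1, m-s_2]$, both of which follow from $m \ge s_1 + s_2$.
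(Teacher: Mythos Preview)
Your argument is correct and follows essentially the same route as the paper: compute the discrete ratio $f(x)/f(x-1)$, locate the maximizer at $x^\ast=\lfloor s_1(m+1)/(s_1+s_2)\rfloor$, and then absorb the off-by-one discrepancy between $x^\ast$ and $s_1m/(s_1+s_2)$ via $\binom{a+1}{s}/\binom{a}{s}=(a+1)/(a-s+1)$. The only cosmetic difference is that the paper bounds this last ratio by $2a$ (with $a=s_im/(s_1+s_2)$) rather than your $2m$, but both feed into the stated $4m^2$.
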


\begin{proof}
	For simplicity, let $f: \mathbb{N}_{\ge0} \to \mathbb{N}_{\ge0}$ be the function given by
	\[ f(x) \coloneqq \binom{x}{s_1} \binom{m-x}{s_2},\]
	If $x \in \{0,\ldots,s_1-1\} \cup \{m-s_2+1,\ldots,m\}$, then $f(x) = 0$.
	Now, suppose that $x \in [s_1,m-s_2]$. Then,
	\begin{align*}
		\dfrac{f(x-1)}{f(x)} = \dfrac{x-s_1}{x} \cdot \dfrac{m-x+1}{m-x+1-s_2}.
	\end{align*}
	As $m-x+1-s_2>0$ for every $x \in [s_1,m-s_2]$, we have that the ratio above is at most 1 if and only if $ (x-s_1)(m-x+1) \le x(m-x+1-s_2)$, which is equivalent to  
	$$x \le \frac{s_1(m+1)}{s_1+s_2}.$$
	Now, set $t =\big\lfloor \frac{s_1 m+s_1}{s_1+s_2} \big\rfloor$.
	From the analysis it follows that $f$ is non-decreasing on $\{0,\ldots,t\}$ and non-increasing on $\{t,\ldots,m\}$. Therefore,
	\begin{align}\label{eq:binomial-max}
		\max_{x \in \{0,\ldots,m\}} f(x) = f(t) \le \binom{\frac{s_1m+s_1}{s_1+s_2}}{s_1}\binom{\frac{s_2m+s_2}{s_1+s_2}}{s_2}. 
	\end{align}
	Now, note that for every $\theta \in [0,1]$, $a \in \mathbb{R}_{>0}$ and $b \in \mathbb{N}_{\ge 0}$ with $b\le a$, we have
	\begin{align}\label{eq:binomial-approx}
		\binom{a+\theta}{b} \le \binom{a+1}{b} = \dfrac{a+1}{a+1-b}\binom{a}{b} \le 2a \binom{a}{b}.
	\end{align}
	As $1 \le s_1 \le s_2$ and $s_1+s_2 \le m$, the lemma follows from~\eqref{eq:binomial-max} and~\eqref{eq:binomial-approx}.
\end{proof}

\begin{lemma}\label{lemma:binomial-approx}	
	Let $a,b,d,\eps\in\mathbb{R}_{>0}$ and $s\in\mathbb{N}_{\ge1}$ be such that $s \le b \le a$, $s \le d$ and $8/d<\eps<1/2$. Then we have
	\[ 
		\binom{b}{s} \le \left(\dfrac{b}{a}\right)^s \binom{a}{s} 
		\qquad \text{and} \qquad
		\binom{(1+\eps)d}{s} \le e^{8\sqrt{\eps}s} \binom{d}{s}.
	\]
        Moreover, if $(1+\delta)s \le d$ with $\delta>0$, then $\binom{(1+\eps)d}{s} \le \left ( 1 + \eps + \frac{\eps}{\delta} \right )^s \binom{d}{s}$ for all $\eps>0$.
\end{lemma}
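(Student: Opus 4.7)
The plan is to handle all three inequalities by expanding the binomial coefficients via the falling-factorial definition $\binom{x}{s} = \prod_{i=0}^{s-1}(x-i)/s!$ and bounding the ratio factor by factor. For the first inequality I would form $\binom{b}{s}/\binom{a}{s} = \prod_{i=0}^{s-1}(b-i)/(a-i)$ and apply the elementary estimate $(b-i)/(a-i) \le b/a$, which holds whenever $0 \le b \le a$ and $i \ge 0$ (cross-multiply to $bi \le ai$); multiplying $s$ such bounds yields the claim. For the third inequality I would write $\binom{(1+\eps)d}{s}/\binom{d}{s} = \prod_{i=0}^{s-1}\bigl(1+\eps d/(d-i)\bigr)$ and use the hypothesis $(1+\delta)s \le d$ to get $d-i \ge d-s \ge \delta s > 0$, so $d/(d-i) \le 1 + s/(d-s) \le 1 + 1/\delta$ and each factor is at most $1+\eps+\eps/\delta$, giving $(1+\eps+\eps/\delta)^s$.

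The real content is the middle inequality. I would again start from the product formula, take logarithms, and apply the uniform elementary bound $\log(1+x) \le 2\sqrt{x}$, valid for all $x \ge 0$ because the derivative of $2\sqrt{x} - \log(1+x)$ equals $1/\sqrt{x} - 1/(1+x)$, and the sign of this derivative is governed by $1 + x + x^2 \ge 0$, which is automatic. This converts the log-ratio into $2\sqrt{\eps d}\sum_{j=d-s+1}^{d} j^{-1/2}$; the inner sum is bounded by the integral comparison $\int_{d-s}^d x^{-1/2}\,dx = 2(\sqrt{d} - \sqrt{d-s})$, and rationalizing gives $\sqrt{d} - \sqrt{d-s} = s/(\sqrt{d}+\sqrt{d-s}) \le s/\sqrt{d}$. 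Combining, the log-ratio is at most $4\sqrt{\eps}\,s$, which is even sharper than the claimed $8\sqrt{\eps}\,s$.

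The main obstacle is choosing the right elementary bound on $\log(1+x)$. The naive estimate $\log(1+x) \le x$ leads to $\eps d\,(H_d - H_{d-s})$, which can blow up like $\eps d \log\bigl(d/(d-s)\bigr)$ when $d$ is only marginally larger than $s$, and then a case analysis on the proximity of $d$ to $s$ would be forced. The sublinear bound $\log(1+x) \le 2\sqrt{x}$ sidesteps this entirely and delivers a clean uniform estimate. After that, the auxiliary hypotheses $8/d < \eps$ and $\eps < 1/2$ are not used in the argument and can be regarded as slack inherited from a different planned proof.
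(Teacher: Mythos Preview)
Your proof is correct. Parts one and three coincide with the paper's argument essentially verbatim: both proceed factor by factor through the product $\prod_{i=0}^{s-1}(x-i)$, and your justification for the third part is the same observation the paper records, just written multiplicatively rather than as $(1+\eps)d - i \le (1+\eps+\eps/\delta)(d-i)$.

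The middle inequality is where you genuinely diverge. The paper does exactly the case split you anticipated and rejected: when $3s \le 2d$ it uses the crude factor bound $(1+\eps)d - i \le (1+3\eps)(d-i)$ to get $e^{3\eps s}$, and when $3s > 2d$ it passes through a combinatorial inequality of the form $\binom{x+y}{s} \le \binom{\lceil x+y\rceil}{\lceil x+y\rceil - \lfloor x\rfloor}\binom{x}{s}$, then bounds the extra binomial by $(e(x+y+1)/y)^{y+2}$ and finishes with $\eps\ln(1/\eps)\le\sqrt{\eps}$; this second case is where the hypotheses $8/d<\eps<1/2$ are actually consumed. Your route via $\log(1+x)\le 2\sqrt{x}$ and the integral comparison $\sum_{i=0}^{s-1}(d-i)^{-1/2}\le 2(\sqrt{d}-\sqrt{d-s})\le 2s/\sqrt{d}$ is cleaner: it avoids the case distinction, yields the sharper constant $4$ in place of $8$, and, as you note, makes the side conditions on $\eps$ unnecessary. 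One small slip: the sign of $h'(x)=1/\sqrt{x}-1/(1+x)$ is governed by $1+x-\sqrt{x}\ge 0$ (equivalently $1-u+u^2\ge 0$ with $u=\sqrt{x}$), not by $1+x+x^2\ge 0$; the conclusion is unaffected.
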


\begin{proof}	
	Note that
	\begin{align*}
		a^s s! \binom{b}{s} &= a^s \cdot b(b-1)(b-2)\cdots (b-s+1)\\
		&\le b^s \cdot a (a-1)\cdots(a-s+1)\\
		&= b^s s! \binom{a}{s},
	\end{align*}
	which proves the first part.

	For the second part, we divide the proof into two cases.
	The first case is when $3s \le 2d$. In this case, note that 
	$(1+\eps)d-i \le \big(1+3\eps\big)(d-i)$ as long as $3i \le 2d$, which implies that
	\begin{align}\label{eq:case-d-big}
		\binom{(1+\eps)d}{s} \le (1+3\eps)^s \binom{d}{s} \le e^{3\eps s} \binom{d}{s}.
	\end{align}

	The second case is when $3s > 2d$. In this case, we shall use that
	\begin{align}\label{eq:binomial-estimate-d-small}
		\binom{x+y}{s} \le \binom{\lceil{x+y}\rceil}{s} \binom{\lceil{x+y}\rceil}{\lceil{x+y}\rceil-\lfloor{x}\rfloor} \binom{x}{s} \le \left (\frac{e(x+y+1)}{y} \right )^{y+2} \binom{x}{s},
	\end{align} 
	which holds for all $x,y\in\mathbb{R}_{>0}$ with $x \ge s$.
	Plugging in $x = d$ and $y = \eps d$ in~\eqref{eq:binomial-estimate-d-small}, recall that $8/d<\eps<1/2$, we obtain
	\begin{align}\label{eq:case-d-small}
		\binom{(1+\eps)d}{s} \le \left (\frac{2e(1+\eps)d}{\eps d} \right )^{\eps d+2} \binom{d}{s} \le e^{5\eps d \ln\left(\frac{1}{\eps}\right)} \binom{d}{s}.
	\end{align}
	As $2d < 3s$ and $\eps \ln\left(\frac{1}{\eps}\right) \le \sqrt{\eps}$ for $\eps\in(0,1)$, it follows from~\eqref{eq:case-d-small} that $\binom{(1+\eps)d}{s} \le e^{8\sqrt{\eps}s} \binom{d}{s}$.

    For the last part, simply note that $(1+\eps)d-i \le \big(1+\eps+\frac{\eps}{\delta}\big)(d-i)$ as long as $(1+\delta)i \le d$.
\end{proof}

\begin{lemma}\label{lemma:binomial-approx-4}
	Let $s_1, s_2, m$ be positive integers, $\delta, \eps \in (0,1)$ be such that $(1+\delta)(s_1+s_2) \le m$, $s_1 + s_2 \ge 2^{5} \delta^{-1}$, $2\sqrt{\eps}\min\{s_{1}, s_{2}\}\geq1$ and
	\[ \dfrac{2^{10} \min\{s_{1}^{2}, s_2^2\}}{m^2(s_1+s_2)^2} \le \eps \le \dfrac{\delta^2 \min\{s_{1}^{2}, s_2^2\}}{2^{10}(s_1+s_2)^2}.\]
	Let $\D$ be the set of all pairs $(t,x) \in \mathbb{N}_{\ge 0} \times \mathbb{N}_{\ge 0}$ such that $s_1 + s_2 \le t \le (1+2\eps)m$ and $ \frac{s_1 m}{s_1+s_2} + 2\sqrt{\eps}m \le x \le t -s_2$. Then, we have
	\[\max_{(t,x) \in \D}\binom{x}{s_1} \binom{t-x}{s_2} \le e^{-\eps(s_1+s_2)}\binom{\frac{s_1m}{s_1+s_2}}{s_1} \binom{\frac{s_2m}{s_1+s_2}}{s_2}.\]
\end{lemma}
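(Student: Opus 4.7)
The plan is in two steps: first I would localise the maximum of $\binom{x}{s_1}\binom{t-x}{s_2}$ on $\mathcal{D}$ using the unimodality from Lemma~\ref{lemma:binomial-asymptotic-max}; second I would compare this maximal value to $\binom{A}{s_1}\binom{B}{s_2}$, where $A=s_1m/(s_1+s_2)$ and $B=s_2m/(s_1+s_2)$, via a logarithmic Taylor expansion. Throughout I write $\rho=(s_1+s_2)/m$, $\mu=(m-s_1-s_2)/(s_1+s_2)\ge\delta$, $\eta=2\sqrt{\eps}m$, and $\eta'=(2\sqrt{\eps}-2\eps)m$; note that both $u=\eta/A$ and $v=\eta'/B$ are at most $\delta/16$ under the hypotheses.

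For step 1, fix $t$. By Lemma~\ref{lemma:binomial-asymptotic-max}, the map $x\mapsto\binom{x}{s_1}\binom{t-x}{s_2}$ is unimodal in $x$ with mode at $x_0(t)=\lfloor s_1(t+1)/(s_1+s_2)\rfloor$. Using $t\le(1+2\eps)m$ and the lower bound $2\sqrt{\eps}\min\{s_1,s_2\}\ge 1$ from the statement, one checks that the smallest admissible $x$, namely $x_\star=\lceil A+2\sqrt{\eps}m\rceil$, already satisfies $x_\star\ge x_0(t)+1$. Hence for each fixed $t$ the max over $x$ is attained at $x_\star$; since $t\mapsto\binom{t-x_\star}{s_2}$ is non-decreasing, the overall max is attained at $t_\star=\lfloor(1+2\eps)m\rfloor$. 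After absorbing the floor/ceiling discretisation (permissible thanks to $s_1+s_2\ge 2^5\delta^{-1}$), the task reduces to proving
\[
\binom{A+\eta}{s_1}\binom{B-\eta'}{s_2}\le e^{-\eps(s_1+s_2)}\binom{A}{s_1}\binom{B}{s_2}.
\]

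For step 2, take logarithms. Using the approximate identity $\ln\binom{n+\eta}{k}-\ln\binom{n}{k}=\int_0^\eta\ln\frac{n+r}{n+r-k}\,dr$ (the discrete/continuous $O(1)$ error is negligible since $A-s_1,B-s_2\gg 1$), together with the substitutions $w=r/s_1$ and $w=r/s_2$ respectively, I obtain
\begin{align*}
L &\coloneqq \ln\frac{\binom{A+\eta}{s_1}\binom{B-\eta'}{s_2}}{\binom{A}{s_1}\binom{B}{s_2}} \\
&= s_1\int_0^{W_1}\ln\!\Bigl(1+\tfrac{1}{\mu+w}\Bigr)dw - s_2\int_0^{W_2}\ln\!\Bigl(1+\tfrac{1}{\mu-w}\Bigr)dw + o(\eps(s_1+s_2)),
\end{align*}
where $W_1=\eta/s_1\le\delta/16$ and $W_2=\eta'/s_2\le\delta/16$. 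Both integrands equal $-\ln(1-\rho)$ at $w=0$, so a second-order Taylor expansion in $w$, combined with the identities $s_1W_1=\eta$, $s_2W_2=\eta'$, $s_1W_1^2+s_2W_2^2=\eta^2/s_1+\eta'^2/s_2$, and $\mu(\mu+1)=m(m-s_1-s_2)/(s_1+s_2)^2$, yields
\[
L = 2\eps m\cdot(-\ln(1-\rho)) - \frac{(s_1+s_2)^2\bigl(\eta^2/s_1+\eta'^2/s_2\bigr)}{2m(m-s_1-s_2)} + \text{smaller}.
\]
Bounding the first (positive) term by $-\ln(1-\rho)\le\rho/(1-\rho)$ and the negative term via Cauchy--Schwarz $\eta^2/s_1+\eta'^2/s_2\ge(\eta+\eta')^2/(s_1+s_2)\ge 9\eps m^2/(s_1+s_2)$ gives $L\le -\tfrac52\,\eps(s_1+s_2)\cdot m/(m-s_1-s_2)\le -\eps(s_1+s_2)$, as required.

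The main obstacle is that the first-order contribution to $L$ does \emph{not} cancel: it is positive, of order $\eps m\cdot(-\ln(1-\rho))$, and is a priori not smaller than the target $\eps(s_1+s_2)$. It is only dominated by the second-order term through the Cauchy--Schwarz-type factor $(s_1+s_2)^2/(s_1s_2)\ge 4$, which is tight at $s_1=s_2$. Delivering the $-\eps(s_1+s_2)$ bound therefore uses all the precise hypotheses --- the two-sided bound on $\eps$ (for both the localisation in step 1 and the smallness of $W_1,W_2$ in step 2), the gap $m\ge(1+\delta)(s_1+s_2)$, and $s_1+s_2\ge 2^5\delta^{-1}$ for the discrete/continuous correction --- with the Taylor remainder carefully tracked.
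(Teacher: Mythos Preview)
Your step~1 coincides with the paper's argument: maximise in $t$ first (since $\binom{t-x}{s_2}$ is increasing in $t$), then use the unimodality from Lemma~\ref{lemma:binomial-asymptotic-max} together with $2\sqrt{\eps}\min\{s_1,s_2\}\ge1$ to reduce to the single point $(t,x)=\bigl((1+2\eps)m,\,A+\eta\bigr)$. For step~2 the paper does no computation whatsoever: it simply cites Lemma~4.2 of~\cite{campos2023typical}, which is exactly the inequality
\[
\binom{A+\eta}{s_1}\binom{B-\eta'}{s_2}\le e^{-\eps(s_1+s_2)}\binom{A}{s_1}\binom{B}{s_2}
\]
under exactly these hypotheses. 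Your Taylor-expansion route is therefore an attempt to reprove that cited lemma from scratch; the main mechanism you identify (the second-order term beating the first via $\eta^2/s_1+\eta'^2/s_2\ge(\eta+\eta')^2/(s_1+s_2)$) is indeed the heart of that lemma's proof.

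There are, however, two concrete gaps in your sketch. First, the claim $W_1=\eta/s_1\le\delta/16$ is false: you correctly bounded $u=\eta/A=2\sqrt{\eps}(s_1+s_2)/s_1\le\delta/16$, but $W_1=u\cdot m/(s_1+s_2)$ and the hypotheses impose no upper bound on $m/(s_1+s_2)$. What you can and should prove instead is $W_i/\mu\le(1+\delta)/16$, which is what keeps $\mu-w>0$ on $[0,W_2]$ and makes the third-order Taylor remainder a bounded fraction of the second-order term. Second, the assertion that the discrete/continuous correction is $o(\eps(s_1+s_2))$ is unjustified: nothing in the hypotheses forces $\eps(s_1+s_2)$ to be large (one only gets $\eps(s_1+s_2)\ge\delta/64$ from combining $2\sqrt{\eps}\min\{s_1,s_2\}\ge1$ with the upper bound on $\eps$), so an $O(1)$ error is not automatically negligible. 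To push your argument through you would need to bound these corrections by a small multiple of the second-order term rather than by $O(1)$, and then verify explicitly that the slack in $-\tfrac{5}{2}\eps(s_1+s_2)\cdot m/(m-s_1-s_2)$ absorbs everything.
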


\begin{proof}
	For simplicity, set $m' = (1+2\eps)m$ and let $f: \D \to \mathbb{Z}$ be the function given by $f(t,x) = \binom{x}{s_1} \binom{t-x}{s_2}$.
	As $t \to \binom{t-x}{s_2}$ is increasing in the interval $[x+s_2,\infty)$, we have $f(t,x) \le f(m',x)$ for all $(t,x) \in \D$.
	Moreover, as $2\sqrt{\eps}\min\{s_{1}, s_{2}\}\geq1$, we have $\frac{s_1 m}{s_1+s_2} + 2\sqrt{\eps}m \ge \frac{s_1 m' + s_{1}}{s_1+s_2}$, and as $x \to f(\lfloor m' \rfloor,x)$ is non-increasing in the interval $\big[\big\lfloor \frac{s_1 m' + s_{1}}{s_1+s_2} \big\rfloor,m'\big]$ (see Lemma~\ref{lemma:binomial-asymptotic-max}), it follows that
	\begin{align}\label{eq:binomial-max-2}
		\max_{(t,x) \in \D} f(t,x) = f \left (m', \frac{s_1 m}{s_1+s_2} + 2\sqrt{\eps}m \right ).
	\end{align}
	Now, we use Lemma 4.2 of~\cite{campos2023typical}.
	This lemma states that if $2^5\delta^{-1} \le s_1+s_2$, $(1+\delta)(s_1+s_2)\le m$ and
	\[ \dfrac{2^{10} \min\{s_1^2, s_{2}^{2}\}}{m^2(s_1+s_2)^2} \le \eps \le \dfrac{\delta^2 \min\{s_1^2, s_{2}^{2}\}}{2^{10}(s_1+s_2)^2},\]
	then we have 
	\[ f \left (m', \frac{s_1 m}{s_1+s_2} + 2\sqrt{\eps}m \right ) \le  e^{-\eps(s_1+s_2)}\binom{\frac{s_1m}{s_1+s_2}}{s_1} \binom{\frac{s_2m}{s_1+s_2}}{s_2}.\qedhere\]
\end{proof}

\begin{lemma}\label{lemma:binomial-bad-sets}
	Let $\delta,\Gamma,t\in\mathbb{R}_{>0}$ and $s \in \mathbb{N}_{\ge 1}$ be such that $\Gamma \ge 2^5$, $s \le 3t/4$ and $s \le 2^{\Gamma \delta s}$.
    Then, we have
 \[\sum  \limits_{i = \lceil \Gamma \delta s \rceil}^{s} \binom{t}{s-i}\binom{\delta t}{i} \le 2^{-\Gamma\delta s} \binom{t}{s}.\]
\end{lemma}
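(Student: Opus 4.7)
The plan is to set $a_i = \binom{t}{s-i}\binom{\delta t}{i}$ and $i_0 = \lceil \Gamma\delta s\rceil$, establish geometric decay of $(a_i)_{i \ge i_0}$, bound $a_{i_0}/\binom{t}{s}$ via a telescoping-product estimate with Stirling, and combine the two.

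First I would compute
\[
\frac{a_{i+1}}{a_i} \,=\, \frac{(s-i)(\delta t - i)}{(t-s+i+1)(i+1)}.
\]
For $i \ge i_0 - 1$, the assumption $s \le 3t/4$ gives $t-s+i+1 \ge t/4$, and $i+1 \ge \Gamma\delta s$ by the definition of $i_0$, so this ratio is at most $\frac{s\cdot\delta t}{(t/4)(\Gamma\delta s)} = 4/\Gamma \le 1/8$, using $\Gamma \ge 2^5$. Summing the resulting geometric series,
\[
\sum_{i \ge i_0} a_i \,\le\, \frac{a_{i_0}}{1 - 1/8} \,\le\, 2\,a_{i_0}.
\]

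Second, I would expand $a_{i_0}/\binom{t}{s}$ as a telescoping product and bound each of its factors by $4\delta s/(j+1)$ in the same way:
\[
\frac{a_{i_0}}{\binom{t}{s}} \,=\, \prod_{j=0}^{i_0-1}\frac{(s-j)(\delta t - j)}{(t-s+j+1)(j+1)} \,\le\, \frac{(4\delta s)^{i_0}}{i_0!} \,\le\, \Bigl(\frac{4e\delta s}{i_0}\Bigr)^{i_0} \,\le\, \Bigl(\frac{4e}{\Gamma}\Bigr)^{i_0},
\]
using Stirling's lower bound $i_0! \ge (i_0/e)^{i_0}$ together with $i_0 \ge \Gamma\delta s$.

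Combining the two estimates gives $\sum_{i \ge i_0} a_i \le 2(4e/\Gamma)^{i_0}\binom{t}{s}$. The hypothesis $\Gamma \ge 2^5$ yields $\log_2(\Gamma/(4e)) \ge \log_2(8/e) > 3/2$, hence $(4e/\Gamma)^{i_0} \le 2^{-(3/2)\,i_0} \le 2^{-(3/2)\Gamma\delta s}$. The target bound $2^{-\Gamma\delta s}\binom{t}{s}$ then follows from $2\cdot 2^{-(3/2)\Gamma\delta s} \le 2^{-\Gamma\delta s}$, which amounts to $\Gamma\delta s \ge 2$; this is in turn guaranteed by $s \le 2^{\Gamma\delta s}$ for $s \ge 4$, while the residual cases $s \in \{1,2,3\}$ can be dispatched by directly inspecting the (at most three-term) sum via $a_i/\binom{t}{s} \le (4\delta s)^i/i!$ and $\delta \le 1/\Gamma \le 1/32$ (otherwise $i_0 > s$ and the sum is empty). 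The main obstacle will be keeping all the constants tight: the geometric decay rate is exactly $4/\Gamma$, Stirling contributes a factor $e$, and it is precisely the threshold $\Gamma \ge 2^5$ that makes $\log_2(\Gamma/(4e)) > 3/2$ and closes the computation.
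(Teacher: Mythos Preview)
Your proof is correct and follows essentially the same idea as the paper's: bound $\binom{t}{s-i}\binom{\delta t}{i}\big/\binom{t}{s}$ by a product of ratios each at most $\tfrac{4\delta s}{j+1}$, so that the $i$th term is at most roughly $(4\delta s/i)^i$, which is tiny once $i\ge \Gamma\delta s$. The only notable difference is in the bookkeeping: the paper proves the pointwise bound $\binom{t}{s-i}\binom{\delta t}{i}\le (4\delta s/i)^i\binom{t}{s}$ directly by induction (no Stirling, hence no factor $e$), and then controls the sum by $s$ times the largest term using the monotonicity of $x\mapsto(\alpha/x)^x$ on $[\alpha/e,\infty)$. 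Because the hypothesis $s\le 2^{\Gamma\delta s}$ is tailored exactly to absorb this factor $s$, the paper's argument works uniformly for all $s\ge 1$ and avoids the separate treatment of $s\in\{1,2,3\}$ that your geometric-series/Stirling route requires.
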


\begin{proof}
  First, we claim that if $a,b\in\mathbb{R}_{>0}$ and $c,d\in\mathbb{N}_{\ge1}$ with $d \le c \le 3a/4$, then $\binom{a}{c-d} \binom{b}{d}  \le \left ( \frac{4bc}{ad}\right )^d \binom{a}{c}$.
  Indeed, note that
  \begin{align*}
      \binom{a}{c-d} \binom{b}{d}  = \frac{c-d+1}{a-c+d} \cdot \dfrac{b-d+1}{d}\binom{a}{c-d+1)} \binom{b}{d-1} \le \frac{cb}{(a-c)d} \binom{a}{c-d+1} \binom{b}{d-1}.
  \end{align*}
   Then, the inequality follows by noting that $c/(a-c) \le 4c/a$ and using induction on $d$.
    
    It follows that if $s \le 3t/4$, then we have
	\begin{align*}
		\sum  \limits_{i = \lceil \Gamma \delta s \rceil}^{s} \binom{t}{s-i}\binom{\delta t}{i} \le \sum \limits_{i = \lceil \Gamma \delta s \rceil}^{s} \left ( \dfrac{4\delta  s}{i} \right )^i \binom{t}{s}.
	\end{align*}
	As the function $x \to \left ( \frac{\alpha}{x} \right )^x$ is decreasing in the interval $[\alpha e^{-1},+\infty)$ and as $ 4e^{-1}\delta s \le \Gamma \delta s/2$, we have that the above sum is at most
	\[ s \left ( \dfrac{8}{\Gamma} \right)^{-\Gamma \delta s/2} \binom{t}{s} \le 2^{-\Gamma \delta s} \binom{t}{s}.\qedhere\]
\end{proof}

\end{document}